\title{A gluing construction of collapsing Calabi-Yau metrics on K3 fibred 3-folds}
\author{Yang Li}
\date{\today}
\newtheorem{thm}{Theorem}[section]
\newtheorem{lem}[thm]{Lemma}
\theoremstyle{definition}
\newtheorem{cor}[thm]{Corollary}
\newtheorem{rmk}{Remark}
\newtheorem{prop}[thm]{Proposition}
\newtheorem*{Acknowledgement}{Acknowledgement}
\newcommand{\ie}{\emph{i.e.} }
\newcommand{\cf}{\emph{cf.} }
\newcommand{\R}{\mathbb{R}}
\newcommand{\C}{\mathbb{C}}
\newcommand{\Z}{\mathbb{Z}}
\newcommand{\norm}[1]{\left\lVert#1\right\rVert}
\newcommand{\Lap}{\Delta}
\DeclareMathOperator{\Tr}{Tr}
\begin{document}
	\maketitle

\begin{abstract}
We use the gluing method to give a refined description of the collapsing Calabi-Yau metrics on Calabi-Yau 3-folds admitting a Lefschetz K3 fibration.
\end{abstract}

\section{Introduction and background}

Let $X$ be a compact Calabi-Yau 3-fold with a Lefschetz K3 fibration $\pi: X\to Y=\mathbb{P}^1$. Given a reference K\"ahler metric $\omega_X$ on $X$ and $\omega_Y$ on $Y$, we aim to describe the collapsing family of Calabi-Yau metrics $\tilde{\omega}_{t}$ representing the K\"ahler class $[\omega_X+ \frac{1}{t}\pi^* \omega_Y]$ where $0<t\ll1$. Without loss of generality, we impose the volume normalisation $\int_{X_y} \omega_X^2=1$ where $X_y$ is any fibre of $\pi$, and $\int_Y \omega_Y=1$. Denote $S$ as the finite set of critical values of $\pi$. For simplicity, we assume each singular fibre contains only one nodal point.

Collapsing Calabi-Yau metrics for general fibrations have been studied from the viewpoint of a priori estimates, focusing mostly on the behaviour of $\tilde{\omega}_t$ away from the singular fibres $\pi^{-1}(S)$ (\cf e.g. \cite{Tosatti}). The basic picture (\ie the `semi-Ricci-flat' description) is that the collapsing metric involves two scales. If we scale down the family of metrics to $t\tilde{\omega}_t$ whose diameter scale $\sim 1$, then away from $S$ as $t\to 0$ these CY metrics collapse down to a limiting metric $\tilde{\omega}_Y$ on the base $Y$  called the generalised K\"ahler-Einstein metric, satisfying
\begin{equation}
\text{Ric}(\tilde{\omega}_Y)= \text{Weil Petersson metric}.
\end{equation}
This $\tilde{\omega}_Y$ for a general fibration has some singularity along $S$. If we keep the fibres normalised to volume 1, then away from the singular fibres, the fibrewise restrictions $\tilde{\omega}_t|_{X_y}$ will converge smoothly to the Calabi-Yau metric on $X_y$ in the class $[\omega_X|_{X_y}]$, and a tubular neighbourhood of $X_y$ will look like the metric product of $X_y$ with a flat Euclidean space.

More formally, one can introduce the semi-Ricci-flat metric $\omega_{SRF}$. We solve the Monge-Amp\`ere equation on the fibres $X_y$ to find a function $\psi_y$, such that $\omega_X|_{X_y}+\sqrt{-1}\partial \bar{\partial} \psi_y$ is the Calabi-Yau metric on $X_y$. Set $\psi=\psi_y$ as a global function on $X$, then we can write
\begin{equation}\label{semiflatmetric0}
\omega_{SRF}= \omega_X+\sqrt{-1} \partial \bar{\partial} \psi+ \frac{1}{t} \pi^* \tilde{\omega}_Y.
\end{equation}
Its defining feature is that its restriction to fibres are Ricci-flat, and as such captures key features of the collapsing metrics $\tilde{\omega}_t$. However, its definition involves the ambiguity of a form pulled back from the base, is not necessarily positive definite, and can be quite singular near $\pi^{-1}(S)$.

As we approach the nodal points of the fibration, then at a third length scale of order $\sim t^{1/6}$ (the `quantisation scale'), much smaller than the diameter scale of the fibres $\sim 1$, one observes that the semi-Ricci-flat description must break down \cite{Li}. This motivates the  construction of a model CY metric $\omega_{\C^3}$ on $\C^3$, whose asymptotic behaviour at infinity is designed to match up approximately with the semi-Ricci-flat metric \cite{Li}\cite{Ronan}\cite{Gabor}. It was further predicted that this model metric should arise as a scaling limit of $\tilde{\omega}_t$ near the nodal points, describing the geometry at the quantisation scale \cite{Li}.

The a priori estimate method is difficult to detect the geometry at extremely small length scales. On the other hand, the gluing method has been used to some effect in collapsing problems, such as Joel Fine's construction of cscK metrics on fibred complex surfaces \cite{Fine}, and Gross and Wilson's construction of CY metrics on elliptic K3 surfaces \cite{GrossWilson}. These works tend to rely on very favourable gluing models, such that the gluing error is already extremely small before the perturbation step.

The main result of this paper is to carry out the gluing construction for $\tilde{\omega}_t$ (\cf Theorem \ref{CYmetric}). As an immediate consequence,

\begin{thm}\label{GHlimitfibrescale}
As $t\to 0$, the family of CY metrics $\tilde{\omega}_t$ based at the nodal point converges in the Gromov-Hausdorff sense to the metric product $X_0\times \C$, where the nodal K3 fibre $X_0$ is equipped with the orbifold CY metric $\omega_{SRF}|_{X_0}$, and the $\C$ factor has the Euclidean metric.
\end{thm}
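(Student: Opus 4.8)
The plan is to read off the Gromov--Hausdorff statement from the quantitative description supplied by Theorem \ref{CYmetric}. That theorem produces an approximate metric (call it $\omega_t^{\mathrm{app}}$), equal to the semi-Ricci-flat metric $\omega_{SRF}$ outside the quantisation region and modelled on $\omega_{\C^3}$ at scale $t^{1/6}$, together with a uniform estimate expressing $\tilde{\omega}_t=\omega_t^{\mathrm{app}}+\sqrt{-1}\ddbar\phi_t$ with $\sqrt{-1}\ddbar\phi_t$ controlled by a positive power of $t$ relative to $\omega_t^{\mathrm{app}}$ on fibre-scale balls. Since pointed Gromov--Hausdorff limits of bounded balls are stable under uniform $C^0$-closeness of the underlying metric tensors, I would first reduce to computing the pointed limit of $\omega_t^{\mathrm{app}}$ based at the nodal point $p_0$, and separately note that the quantisation region (coordinate radius $\lesssim t^{1/6}$ in the Lefschetz normal form $\pi=x_1^2+x_2^2+x_3^2$) collapses to the single point $p_0$ in the fibre-scale limit, contributing only the orbifold point. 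On its complement $\omega_t^{\mathrm{app}}=\omega_{SRF}$, so the limit is governed by $\omega_{SRF}$.

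Next I would analyse the two factors of
\[
\omega_{SRF}=\omega_X+\sqrt{-1}\ddbar\psi+\tfrac{1}{t}\pi^*\tilde{\omega}_Y
\]
separately. For the base, the key point is that the Picard--Lefschetz monodromy around a nodal K3 degeneration is a finite-order (order two) reflection on $H^2$ of the fibre, in contrast to the infinite-order parabolic monodromy of an elliptic degeneration; hence the period map extends across $y_0$ to the $A_1$-orbifold limit, the Weil--Petersson metric $\text{Ric}(\tilde{\omega}_Y)$ stays bounded near $y_0$ (the vanishing-cycle period enters $\int_{X_y}\Omega\wedge\bar\Omega$ only at order $|z|^2\log|z|$), and $\tilde{\omega}_Y$ extends continuously with positive-definite value there. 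Writing $y=y_0+z$ and rescaling $w=z/\sqrt{t}$ shows that $\tfrac{1}{t}\pi^*\tilde{\omega}_Y$ converges to the flat metric on $\C$, and that a fibre-scale ball of radius $R$ about $p_0$ sees the base only through the shrinking coordinate disc $|z|\lesssim R\sqrt{t}$. For the fibres I would invoke the continuity of the fibrewise Monge--Amp\`ere solution under an ordinary double point degeneration of K3 surfaces: as $y\to y_0$ the vanishing $(-2)$-sphere collapses and $\omega_{SRF}|_{X_y}$ converges to the orbifold Calabi--Yau metric $\omega_{SRF}|_{X_0}$, smooth away from the node and asymptotic to the flat $\C^2/\Z_2$ cone at it.

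To assemble the product I would check that the mixed fibre--base components of $\omega_{SRF}$ decouple in the limit. These components are uniformly bounded, whereas the base directions carry the large metric $\tfrac{1}{t}\pi^*\tilde{\omega}_Y$, so the normalised coupling between a base and a fibre direction is of size $O(\sqrt{t})\to 0$; the metric therefore splits into the orthogonal sum of the fibre Calabi--Yau metric and the flat base metric. Combining this with the two convergences of the previous paragraph gives that the pointed limit of $\bigl(X,\omega_{SRF},p_0\bigr)$ is the metric product $X_0\times\C$, and by the reduction of the first paragraph the same holds for $\tilde{\omega}_t$.

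I expect the main difficulty to lie not in the algebra of the splitting but in the two interface issues. First, one must upgrade the weighted perturbation estimate of Theorem \ref{CYmetric} to honest $C^0$ closeness of the two metric tensors --- hence of their distance functions --- uniformly on fibre-scale balls, making sure the estimate does not degenerate as one approaches the collapsing quantisation region. Second, one must control the continuity of the fibrewise Calabi--Yau metrics all the way down to the orbifold nodal fibre, precisely where the fibre-scale geometry meets the quantisation-scale model $\omega_{\C^3}$; here the point to verify is that the shrinking vanishing cycle does not bubble off a separate ALE piece in the fibre-scale limit but is absorbed into the single orbifold point. Once these are settled, the pointed Gromov--Hausdorff convergence $\bigl(X,\tilde{\omega}_t,p_0\bigr)\to X_0\times\C$ follows.
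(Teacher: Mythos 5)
Your overall strategy---reduce to the metric ansatz via Theorem \ref{CYmetric} and then compute the pointed limit of the approximate metric---is the same as the paper's, but two of your steps are off and the two ``interface issues'' you defer are precisely where the content of the paper's proof lies. First, the identification ``$\omega_t^{\mathrm{app}}=\omega_{SRF}$ outside the quantisation region'' is false near the nodal fibre: $\omega_{SRF}$ is discontinuous at $X_0$ (Remark \ref{regularisationisnecessary}), and the whole point of Section \ref{Regularisingthesemiflatmetric} is to replace it there by a regularised ansatz. In the region $\{|y|<t^{6/(14+\tau)},\ r\gtrsim t^{1/10}+t^{1/12}\rho'^{1/6}\}$ the ansatz is $\omega_X+\frac{1}{t}\tilde{\omega}_Y+\sqrt{-1}\partial\bar{\partial}G_0^*\psi_0$, i.e.\ the potential grafted from the \emph{nodal} fibre, which is $C^0$-close (error a positive power of $t$) to the product metric $G_0^*(\omega_{SRF}|_{X_0}+\frac{A_0}{t}\sqrt{-1}dy\wedge d\bar{y})$. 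This actually simplifies your task: since the ansatz is already modelled on the product, you do not need to re-derive continuity of the fibrewise Calabi--Yau metrics or of $\tilde{\omega}_Y$ as $y\to 0$; the paper just quotes this closeness and is done with that region. Your side claims in the base analysis are also inaccurate: by the Lemma in Section \ref{ThegeneralisedKahlerEinsteinmetric}, $A_y=\text{smooth}-2|g(y)|^2|y|$ with $g(0)\neq 0$, so the vanishing-cycle period enters at order $|y|$ (not $|y|^2\log|y|$), $A_y$ is only Lipschitz, and the Weil--Petersson metric $\mathrm{Ric}(\tilde{\omega}_Y)$ in fact blows up like $|y|^{-1}$; only the Lipschitz continuity of $A_y$ is needed, and that is what holds.

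Second, both deferred issues have short resolutions from the paper's machinery, and leaving them unsettled leaves the proof incomplete exactly at its crux. (i) Theorem \ref{CYmetric} already gives $\norm{\tilde{\omega}_t-\omega_t}_{C^{0,\alpha}_{0,0,t}}\leq Ct^{\frac{1}{20}+\frac{13}{60}\delta}\ll 1$, and the $(0,0,t)$-weighted norm is by construction uniform equivalence against the local model geometries; hence all distance functions are distorted by a factor $1+o(1)$ uniformly, with no degeneration near the quantisation region. (ii) The absence of an ALE bubble at the fibre scale is exactly the paper's observation that every point of the quantisation region $\{r\lesssim t^{1/10}+t^{1/12}\rho'^{1/6},\ |y|<t^{6/(14+\tau)}\}$ lies within $\tilde{\omega}_t$-distance $\sim t^{1/(14+\tau)}\to 0$ of the product region, so this region is Gromov--Hausdorff-absorbed into the orbifold points. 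Finally, a pointed limit argument must also record which part of $X$ escapes to infinity: the region $\{|y|>t^{6/(14+\tau)}\}$ sits at $\tilde{\omega}_t$-distance of order $t^{-(2+\tau)/(2(14+\tau))}\to\infty$ from the node, while the near region exhausts $X_0\times\C$; your rescaling $w=z/\sqrt{t}$ implicitly contains this but it should be stated, since it is what certifies that the product metric on all of $X_0\times\C$, and nothing else, appears in the limit.
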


\begin{rmk}
This result has been obtained in \cite{Li2} by means of nonlinear estimates assuming a conjecture in pluripotential theory.
\end{rmk}

Morever, we verify that $\omega_{\C^3}$ arises as a blow up limit of the collapsing metrics $\tilde{\omega}_t$ near the node.

\begin{thm}\label{omegaCblowuplimit}
There exists a 1-parameter family of holomorphic embedding maps $F_t$ from large Euclidean balls in $\C^3$ to a neighbourhood of the node inside $X$, and a fixed number $A_0$ depending on the geometry of $\pi: X\to Y$, such that as $t\to 0$, the scaled CY metrics $( \frac{2A_0}{t})^{1/3} F_t^*\tilde{\omega}_t$ converge in $C^0_{loc}(\C^3)$ to the model CY metric $\omega_{\C^3}$.
\end{thm}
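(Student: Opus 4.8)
The plan is to read the statement off the gluing construction underlying Theorem \ref{CYmetric} rather than from a priori estimates. Recall that that construction produces $\tilde\omega_t$ as a perturbation $\tilde\omega_t = \omega_{t,\mathrm{app}} + \sqrt{-1}\ddbar\phi_t$ of a glued approximate metric $\omega_{t,\mathrm{app}}$, where on the quantisation-scale neighbourhood of the node $\omega_{t,\mathrm{app}}$ is by design a rescaled copy of the model $\omega_{\C^3}$, patched into $\omega_{SRF}$ by cut-offs supported near the outer boundary of that neighbourhood, and where $\phi_t$ obeys a weighted H\"older estimate. I would take $F_t$ to be the fixed holomorphic coordinate chart centred at the node (in which $\pi = \sum_i z_i^2$ up to an additive constant) precomposed with the Euclidean dilation $z\mapsto t^{1/3}z$, restricted to balls $B_{R(t)}$ with $R(t)\to\infty$ slowly enough that the image stays inside the chart. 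These are holomorphic embeddings.

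First I would fix $A_0$ by matching volume forms. Since $(\pi^*\omega_Y)^2=0$ on the curve base, the total volume is $\int_X\tilde\omega_t^3 = \int_X\omega_X^3 + 3t^{-1}$, so the Calabi-Yau equation reads $\tilde\omega_t^3 = a_t\,\Omega_X\wedge\bar\Omega_X$ with $t\,a_t$ converging to an explicit constant as $t\to0$. Writing $\Omega_X = h\,dz_1\wedge dz_2\wedge dz_3$ near the node and setting $\Omega_0 = dz_1\wedge dz_2\wedge dz_3$, the dilation gives $F_t^*(\Omega_X\wedge\bar\Omega_X) = t^2\,|h(t^{1/3}z)|^2\,\Omega_0\wedge\bar\Omega_0$, so that $(\tfrac{2A_0}{t})\,F_t^*(\tilde\omega_t^3) = 2A_0\,(t a_t)\,|h(t^{1/3}z)|^2\,\Omega_0\wedge\bar\Omega_0$ converges to $2A_0\,(\lim_{t\to0} t a_t)\,|h(0)|^2\,\Omega_0\wedge\bar\Omega_0$. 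Choosing $A_0$ in terms of $|h(0)|^2$ and $\lim_{t\to0} t a_t$, hence the geometry of $\pi$, makes this equal to $\omega_{\C^3}^3$. This simultaneously fixes $A_0$ and confirms that $t^{1/3}$ is the correct dilation and $(\tfrac{2A_0}{t})^{1/3}$ the correct metric scaling for the rescaled volume forms to converge; one checks en passant that the image has $\tilde\omega_t$-diameter of order $t^{1/6}$, the quantisation scale.

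Next I would dispatch the approximate metric. Since the gluing inserts $\omega_{\C^3}$ literally on the inner region, and pulling back by $F_t$ together with the factor $(\tfrac{2A_0}{t})^{1/3}$ pushes the cut-off locus out to radius $\to\infty$, one has $(\tfrac{2A_0}{t})^{1/3}F_t^*\omega_{t,\mathrm{app}}\to\omega_{\C^3}$ in $C^\infty_{loc}(\C^3)$ essentially by construction. It then remains to show that the rescaled perturbation is negligible, i.e. that $\hat\phi_t := (\tfrac{2A_0}{t})^{1/3}F_t^*\phi_t$ satisfies $\sqrt{-1}\ddbar\hat\phi_t\to0$ in $C^0_{loc}$. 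Here I use that $F_t$ is holomorphic, so pullback commutes with $\ddbar$ and it suffices to bound $\hat\phi_t$ in $C^{2,\alpha}_{loc}$ with vanishing Hessian in the limit. For this I would feed in the weighted estimate on $\phi_t$ from Theorem \ref{CYmetric}: the weight near the node is arranged so that, after rescaling, $\|\hat\phi_t\|$ carries a positive power of $t$, whence $\hat\phi_t\to0$ on each fixed $B_R$; the rescaled complex Monge-Amp\`ere equation, with $\hat\omega_t$ uniformly equivalent to $\omega_{\C^3}$ by the previous step, then supplies interior Schauder estimates upgrading this to $C^{2,\alpha}_{loc}$ decay. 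Combining, $\hat\omega_t = (\tfrac{2A_0}{t})^{1/3}F_t^*\omega_{t,\mathrm{app}} + \sqrt{-1}\ddbar\hat\phi_t \to \omega_{\C^3}$ in $C^0_{loc}$.

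The main obstacle is this last step: the weighted bound controls $\phi_t$ only up to the global weight, and one must verify that the decay rate of that weight at the node genuinely \emph{beats} the metric rescaling factor $t^{-1/3}$, so that the rescaled perturbation vanishes rather than merely remaining bounded. Two points require care: first, extracting a quantitative gain (a positive power of $t$, not just boundedness) from the gluing error and the weighted norm at the quantisation scale; and second, justifying the uniform ellipticity needed for the rescaled Schauder step without circularity. The latter is also why only $C^0_{loc}$, rather than $C^\infty_{loc}$, convergence is asserted: the available weighted norm controls only finitely many derivatives with the requisite gain.
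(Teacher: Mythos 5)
Your overall strategy --- read the statement off the gluing construction and then show the perturbation produced by Theorem \ref{CYmetric} dies under rescaling --- is the paper's own route, but your concrete choices of $F_t$ and $A_0$ are wrong, and the error is not cosmetic. Matching volume forms is a \emph{single} scalar equation in the \emph{two} unknowns (the dilation constant and $A_0$), so it cannot ``confirm'' that $t^{1/3}$ is the correct dilation; it only constrains a product of the two. The missing condition is matching the horizontal part of the metric, which is tied to the normalisation of the Lefschetz coordinate: in the paper's coordinates $y=\sum \mathfrak{z}_i^2$ one has $\Omega=\sqrt{A_0}\,d\mathfrak{z}_1 d\mathfrak{z}_2 d\mathfrak{z}_3(1+O(\mathfrak{z}))$ with $A_0=\int_{X_0}\Omega_0\wedge\overline{\Omega}_0$, so your $|h(0)|^2=A_0$ is itself a geometric datum, and the horizontal part of $\omega_{SRF}$ near the node is $\frac{A_0}{t}\sqrt{-1}\,dy\wedge d\bar y$. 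Under a dilation $\mathfrak{z}=\lambda z$ one has $y=\lambda^2\tilde y$, so the rescaled horizontal term is $(\frac{2A_0'}{t})^{1/3}\frac{A_0\lambda^4}{t}\sqrt{-1}\,d\tilde y\wedge d\bar{\tilde y}$, and this must equal $\frac12\sqrt{-1}\,d\tilde y\wedge d\bar{\tilde y}$, the horizontal asymptote of $\phi_\infty$; together with volume matching this forces $A_0'=A_0$ and $\lambda=(\frac{t}{2A_0})^{1/3}$, which is exactly the coordinate change (\ref{embedC3toX}). With your choices ($\lambda=t^{1/3}$, then $A_0'=\frac{1}{4A_0}$ from volumes) the rescaled metrics converge instead to $c^{-2}D_c^*\omega_{\C^3}$, where $D_c(z)=cz$ and $c=(2A_0)^{1/3}$: this has the right volume form but horizontal growth $\frac{c^2}{2}|\tilde y|^2$ rather than $\frac12|\tilde y|^2$, and by the uniqueness remark in section \ref{GeometryofthemodelmetricC3} it differs from $\omega_{\C^3}$ unless $A_0=\frac12$. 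So as written your step 2 (``$(\frac{2A_0}{t})^{1/3}F_t^*\omega_{t,\mathrm{app}}\to\omega_{\C^3}$ by construction'') is false for your $F_t$, and the theorem is not obtained.

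The step you flag as ``the main obstacle'' is the second gap, and it is precisely what the paper's function spaces are designed to make immediate: you should not route through the potential $\phi_t$ at all (the paper deliberately never estimates it --- see the remark after Lemma \ref{GreenY} --- only $\ddbar$ of it). Theorem \ref{CYmetric} bounds the 2-form $\tilde\omega_t-\omega_t$ directly in the norm $C^{0,\alpha}_{0,0,t}(X)$, and by the definition (\ref{weightedHoldernorm}) the $U_1$-component of that norm is (up to a fixed constant) the H\"older norm of the pullback to $F_t^{-1}(U_1)\subset\C^3$ measured against $\omega_{\C^3}$, i.e.\ it already \emph{is} the rescaled, scale-invariant quantity. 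Hence $|(\frac{2A_0}{t})^{1/3}F_t^*(\tilde\omega_t-\omega_t)|_{\omega_{\C^3}}\leq C t^{\frac{1}{20}+\frac{13}{60}\delta}\to 0$ for $-\frac{3}{13}<\delta<0$, with no Schauder bootstrapping, no uniform-ellipticity discussion, and no risk of circularity; the positive power of $t$ you worry about is exactly the exponent $\frac{1}{20}+\frac{13}{60}\delta>0$ in Theorem \ref{CYmetric}. One last small point: with the correct $F_t$, the convergence of the ansatz on fixed balls should only be claimed in $C^0$ (or $C^{0,\alpha}$), not $C^\infty_{loc}$, since $\omega_t$ involves the merely Lipschitz coefficient $A_y$; this is consistent with, and is the reason for, the $C^0_{loc}$ statement of the theorem.
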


Roughly speaking, the metric ansatz is constructed by gluing the model metric $\omega_{\C^3}$ to the semi-Ricci-flat metric. One issue is that the semi-Ricci-flat metric istelf is expected to be singular on the singular fibre, and thus needs to be regularised first. The resulting metric ansatz suffers from rather large gluing errors, and one needs to work with rather coarse function spaces to perturb this into the actual CY metric $\tilde{\omega}_t$.

The key issue is to understand the harmonic analysis of the Laplace operator for the metric ansatz. The difficulty is the simultaneous presence of several scales with very different characteristic behaviours, an issue inherent in any collapsing problem and made more acute by the presence of singular fibres. The 
 technique is largely drawn from the work of G. Sz\'ekelyhidi \cite{Gabor}.  It involves analysing mapping properties of weighted H\"older spaces for every model geometry at each scale, decomposing the functions into pieces each sensitive only to one particular scale, inverting the Laplacian approximately on individual pieces using the various model Green operators, and patching the pieces to an approximate global solution. The main advantage of this method, aside from giving a fairly explict description of the Green operator, is that it allows us to derive a $t$-independent bound on a suitable operator norm, and in this sense this linear theory is optimal.

It is worth pointing out that following the recent works \cite{Ronan}\cite{Gabor}, many other new examples of complete CY metrics on $\C^n$ are now known, which are strong candidates for modelling collapsing fibrations with higher dimensional fibres. Such examples are likely to provide a vast generalisation of the main result of the present paper.

\begin{rmk}
All constants are uniform for sufficiently small $t$ unless stated otherwise.
\end{rmk}

\begin{Acknowledgement}
	The author is grateful to his PhD supervisor Simon Donaldson and co-supervisor Mark Haskins for their inspirations, Gabor Sz\'eklyhidi for discussions, and the Simons Center for hospitality. 
	
	This work was supported by the Engineering and Physical Sciences Research Council [EP/L015234/1], the EPSRC Centre for Doctoral Training in Geometry and Number Theory (The London School of Geometry and Number Theory), University College London. The author is also funded by Imperial College London for his PhD studies.
\end{Acknowledgement}

\section{Construction of metric ansatz}

\subsection{The generalised K\"ahler Einstein metric $\tilde{\omega}_Y$}\label{ThegeneralisedKahlerEinsteinmetric}

As mentioned in the introduction, the generalised K\"ahler Einstein metric $\tilde{\omega}_Y$ on the base $Y$ models the collapsing limit of the scaled family of CY metrics $t\tilde{\omega}_t$. Since the base $Y$ is complex one-dimensional, we can write down $\tilde{\omega}_Y$ rather more explicitly (\cf \cite{Tosatti}\cite{Li2}). Let $\Omega$ be the holomorphic volume form on $X$, normalised to $\int_X \sqrt{-1}\Omega\wedge \overline{\Omega}= 1$. Under our normalisation convention $\int_Y \omega_Y=\int_Y \tilde{\omega}_Y= 1$, this $\tilde{\omega}_Y$ is just the pushforward of the volume form
\begin{equation}\label{generalisedKEmetric}
\tilde{\omega}_Y= \pi_*( \sqrt{-1}\Omega\wedge \overline{\Omega} ).
\end{equation}
If we pick holomorphic local coordinate $y$ on $Y$, then we can write $\Omega=dy\wedge \Omega_y$, where by adjunction $\Omega_y$ is the holomorphic volume form on the fibre $X_y$, and when $y$ varies it gives a holomorphic section of the relative canonical bundle. The formula (\ref{generalisedKEmetric}) boils down to
\begin{equation}
\tilde{\omega}_Y= \sqrt{-1} dy\wedge d\bar{y} \int_{X_y} \Omega_y \wedge \overline{\Omega}_y=A_y \sqrt{-1} dy\wedge d\bar{y}.
\end{equation}

In our situation the only singularity in the fibration $\pi$ are assumed to be nodal. Then 
\begin{lem}
The function $A_y=\int_{X_y} \Omega_y \wedge \overline{\Omega}_y$ is Lipschitz in $y$.	
\end{lem}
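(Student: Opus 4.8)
The plan is to reformulate $A_y$ cohomologically and then localise near the finitely many nodes. Since $\Omega_y$ and $\overline{\Omega}_y$ are closed, the integral depends only on the cohomology class of the fibrewise holomorphic two-form: writing $\langle\cdot,\cdot\rangle$ for the intersection pairing on $H^2(X_y)$, one has $A_y=\big\langle[\Omega_y],\overline{[\Omega_y]}\big\rangle$. Over the regular locus $Y\setminus S$ the period $[\Omega_y]$ varies holomorphically in the Gauss--Manin trivialisation, so $A_y$ is real-analytic there, and it suffices to analyse $A_y$ near a fixed critical value, normalised to $y_0=0$ with its single node. This localisation is also visible directly: in holomorphic coordinates with $\pi=z_1^2+z_2^2+z_3^2$ near the node, splitting $A_y=\int_{X_y\setminus B}+\int_{X_y\cap B}$ over a fixed small ball $B$ shows the exterior integral is smooth in $y$, since $\pi$ is a submersion there and the fibres-with-boundary vary smoothly, so all non-smoothness sits in the interior term.

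First I would identify the local monodromy. The loop around $y=0$ acts on $H^2(X_y)$ by the Picard--Lefschetz transformation $x\mapsto x+\langle x,\delta\rangle\delta$ in the vanishing cycle $\delta$ with $\langle\delta,\delta\rangle=-2$; since $\delta\mapsto-\delta$ and the map is the identity on $\delta^{\perp}$, this monodromy is semisimple of order $2$. This finiteness is special to the even-dimensional, $(-2)$-curve situation (for a nodal fibre of an elliptic fibration the monodromy is infinite unipotent and $A_y$ would instead diverge like $\log|y|$). Pulling back along the double cover $s\mapsto y=s^2$ therefore trivialises the monodromy, and by the removable-singularity property for variations of Hodge structure with trivial monodromy the period extends holomorphically across $s=0$. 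Concretely $[\Omega_y]$ extends to a nonvanishing holomorphic section $\phi(s)$ with $\phi(0)=[\Omega_0]\neq 0$, so that $A_y=\big\langle\phi(s),\overline{\phi(s)}\big\rangle$ is real-analytic in $(s,\bar s)$.

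The Lipschitz bound is then immediate from a parity argument. As $A_y$ is single-valued in $y$, it is invariant under $s\mapsto-s$, so its Taylor expansion contains only monomials $s^j\bar s^k$ with $j+k$ even. Grouping by degree and substituting $y=s^2$, the degree-$0$ term is $A_0$, the degree-$2$ terms give $\mathrm{Re}(c\,y)+c'\,|y|$, and all higher even-degree terms are $O(|y|^2)$, so that
\[
A_y=A_0+\mathrm{Re}(c\,y)+c'\,|y|+O(|y|^2),
\]
whose only non-smooth term is the Lipschitz function $|y|$. I expect the main obstacle to be the justification of the holomorphic extension of the period after base change, i.e. that the limiting class $\phi(0)$ is genuinely the period of the resolved central fibre and that the extension is nonvanishing; this is exactly where the node hypothesis enters, through the finiteness of the Picard--Lefschetz monodromy. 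As an independent check producing the same $|y|$-expansion, one can compute the interior term by hand: the rescaling $z=\sqrt{y}\,w$ identifies $X_y\cap B$ with a large truncation of the smooth affine quadric $\{\sum w_i^2=1\}\cong T^*S^2$, on which $\Omega_y\wedge\overline{\Omega}_y=|y|\,\sigma\wedge\overline{\sigma}$ is a constant multiple of the Eguchi--Hanson volume; the ALE asymptotics to the flat cone $\C^2/\Z_2$ give truncated volume $c_2\rho^2+O(1)$ with $\rho=R|y|^{-1/2}$, reproducing the constant $c_2R^2$ together with an $O(|y|)$ remainder. The delicate point in this second route is precisely the cancellation of the would-be $\rho$ and $\log\rho$ terms, without which one would obtain only H\"older-$\tfrac12$ continuity.
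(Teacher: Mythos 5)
Your skeleton---pull back by the double cover $s\mapsto y=s^2$ to kill the order-two Picard--Lefschetz monodromy, obtain (real-)analytic dependence on $s$, then use invariance under $s\mapsto -s$ to conclude that only even monomials, hence only Lipschitz terms such as $|y|$, can appear---is exactly the paper's, and your identification of the monodromy and the parity bookkeeping are correct. The gap is in the one step that carries all the content: the passage from ``the VHS on the punctured $s$-disc has trivial monodromy, hence extends across $s=0$'' to ``$[\Omega_{s^2}]$ extends to a nonvanishing holomorphic section $\phi(s)$ with $\phi(0)=[\Omega_0]$''. Griffiths' removable-singularity theorem extends the Hodge \emph{filtration}, i.e.\ the period map into the period domain: it produces a limiting line $F^2\subset H^2$ varying holomorphically up to $s=0$, but it says nothing about the particular vector $[\Omega_{s^2}]$ inside that line. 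That vector carries a normalisation coming from the adjunction $\Omega=dy\wedge\Omega_y$ on the total space, which is geometric data invisible to the abstract VHS: for instance $y^{-1}[\Omega_y]$ (i.e.\ $s^{-2}\phi(s)$) spans the same extended line and is equally monodromy-invariant, yet does not extend. So the holomorphic extension and nonvanishing of $\phi$ require a separate boundedness argument for the normalised periods near the node---precisely the point you defer as ``the main obstacle''. Since the parity argument can only be run once real-analyticity of $A_y$ in $(s,\bar s)$ is established, the proposal as written asserts, rather than proves, the lemma.

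The missing step can be filled in at least two ways. The paper does it geometrically: after the base change the total space acquires a threefold ordinary double point, the small resolution $\tilde{\mathcal{X}}'\to\mathcal{X}'$ is crepant, so $\Omega_y$ (the Poincar\'e residue on $\mathcal{X}'$) extends to a holomorphic section of the relative canonical bundle of the \emph{submersion} $\tilde{\mathcal{X}}'\to D_{\sqrt{y}}$, and smoothness of the fibre integral $A_y$ in $\sqrt{y}$ is then standard smooth dependence of integrals over the fibres of a proper submersion; no period theory is needed (your cohomological computation is essentially the paper's ``closer examination'', which it explicitly labels as not needed for the proof). Alternatively, within your framework it suffices to prove that $A_y$ stays bounded as $y\to 0$ (a uniform local integrability estimate for $\Omega_y\wedge\overline{\Omega}_y$ near the node, or your Eguchi--Hanson volume count, which for mere boundedness does not require the delicate $\log$ cancellation): writing $\phi(s)=h(s)v(s)$ with $v$ a holomorphic frame of the extended line, the Hodge--Riemann positivity $\langle v(0),\overline{v(0)}\rangle>0$ of the limiting polarized Hodge structure converts $A_y=|h|^2\langle v,\bar{v}\rangle\le C$ into boundedness of $h$, hence holomorphic extension of $\phi$. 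Either supplement would make your argument complete.
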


\begin{proof}
We focus on the fibration $\pi:\mathcal{X}\to D_y$ over a small disc around the nodal fibre $X_0$, and take the square root fibration $\mathcal{X}'\to D_{\sqrt{y}}$. After taking a small resolution $\tilde{\mathcal{X}}'\to \mathcal{X}'$, the fibration $\tilde{\mathcal{X}}'\to D_{\sqrt{y}}$ 
becomes a submersion. Since $\Omega_y$ still defines a holomorphic section of the relative canonical bundle for the new fibration, the submersion property shows that $A_y$ is a smooth function over the square root base, namely $A_y$ is a smooth function of $\sqrt{y}$. But $A_y$ is an even function of $\sqrt{y}$, hence Lipschitz in $y$.

We take a closer examination at the singularity of $A_y$, which is not  needed for the proof. Notice that $\Omega_y$ is a closed 2-form on $X_y$, and so is $\overline{\Omega}_y$. This means the fibrewise integral $A_y$ only depends on the cohomology class of $\Omega_y$, which is the same data as the period integrals. Let $\Sigma\in H_2(X_y)$ be the class of a vanishing cycle. If a 2-cycle $\alpha\in H_2(X_y)$ is monodromy invariant, or equivalently it is orthogonal to $\Sigma$ under the intersection product, then the period integral $\int_{\alpha}\Omega_y$ is smooth in $y$, because we can make the representing cycles avoid the nodal point. To understand $\int_{\Sigma} \Omega_y$, we again pass to the family $\tilde{\mathcal{X}}'\to D_{\sqrt{y}}$. This vanishing cycle class $\Sigma$ becomes the class of the exceptional $\mathbb{P}^1$ when we take the small resolution. Again $\int_{\Sigma} \Omega_y$ is smooth in $\sqrt{y}$. Furthermore, Picard-Lefschetz formula implies that $\int_{\Sigma} \Omega_y$ is an odd function of $\sqrt{y}$, and the nature of period integrals implies this function is holomorphic in $\sqrt{y}$, so 
\[
\int_{\Sigma} \Omega_y=  g(y)\sqrt{y},
\]
where $g$ is a holomorphic function in $y$. The Lefschetz fibration imposes a further nondegeneracy condition on the deformation of the nodal fibre, which being translated into period integrals means $g(0)\neq 0$. Combining these discussions, the class $[\Omega_y]\in H^2(X_y)$ is the sum of a smooth monodromy invariant part and an orthogonal part $(g(y)\sqrt{y}) \Sigma$. This implies
\[
A_y = \int_{X_y} [\Omega_y]\wedge \overline{[\Omega_y]} =\text{smooth term}-2 |g(y)|^2 |y|.
\]
The factor $-2$ comes from $\Sigma\cdot \Sigma=-2$.
Notice $|g(y)|^2$ is smooth in $y$, but the modulus function $|y|$ is not $C^1$ in $y$, despite being smooth in $\sqrt{y}$. From the nondegeneracy condition $g(0)\neq 0$, we see the Lipschitz regularity is the sharp statement.
\end{proof}

\begin{rmk}
This failure of smoothness constrains the regularity of the metric ansatz we can produce.
\end{rmk}

Clearly $A_y\geq C> 0$, so $\tilde{\omega}_Y$ is uniformly equivalent to ${\omega_Y}$ on $Y$. We will later abuse notation to regard $\omega_Y$ and $\tilde{\omega}_Y$ also as forms on $X$.

\subsection{CY metrics on smoothings of the nodal K3 fibre}\label{CYmetricsonsmoothingsofthenodalK3fibre}

We now describe the CY metrics on the K3 fibres which are small deformations of any chosen nodal fibre. The basic picture is that these are obtained by gluing scaled versions of the Eguchi-Hanson metric to the CY metric on the nodal K3 fibre.
This section will be brief since there 
 are many gluing constructions of very similar nature in the literature, e.g. \cite{Donaldson}\cite{Spotti}, but we want to give enough details to keep track of the key estimates for later use.

On the nodal central fibre $X_0$, we write the orbifold CY metric as $\omega_{SRF}|_{X_0}=\omega_X|_{X_0} + \sqrt{-1} \partial \bar{\partial} \psi_0$, solving the Monge-Amp\`ere equation
\[
(\omega_X|_{X_0}+ \sqrt{-1} \partial \bar{\partial} \psi_0)= \frac{1}{A_{0}}   \Omega_{0}\wedge \overline{\Omega}_{0}, \quad \int_{X_0} \psi_0 \omega_X^2=0, \quad A_0=A_{y}|_{y=0}, \Omega_0=\Omega_y|_{y=0}.
\]
At the nodal point $P$, 
the fibration $\pi$ induces (up to scale) a complex symmetric bilinear form on the tangent space $T_P X$, or equivalently an $SO(3, \C)\simeq SL(2, \C)/\Z_2$ structure. The orbifold CY metric singles out a Hermitian metric on $\C^2/\Z_2$, or equivalently an $SU(2)/\Z_2=SO(3, \R)$ structure, so we have a preferred Hermitian structure $|\cdot |$ on $T_P X$. We can then choose local coordinates $\mathfrak{z}_1, \mathfrak{z}_2, \mathfrak{z}_3$ on an open neighbourhood $U_1\subset X$ and a local coordinate $y$ on $Y$, where the fibration $\pi$ is represented by $y=\mathfrak{z}_1^2+ \mathfrak{z}_2^2+ \mathfrak{z}_3^2$, and $\psi_0=c_0+ r^2+ O(r^4)$ with $r= ( |\mathfrak{z}_1|^2+ |\mathfrak{z}_2|^2+ |\mathfrak{z}_3|^2 )^{1/4}$.
Here the innocuous constant $c_0$ is a matter of normalisation, which appears because we impose $\int_{X_0} \psi_0 \omega_X^2=0$. Using the compatibility condition $dy\wedge \Omega_y=\Omega $ at $y=0$, one calculates the normalisation on the holomorphic volume form at the nodal point to be
\[
\begin{split}
\sqrt{-1}\Omega\wedge \overline{\Omega}|_{\mathfrak{z}=0}&=
\Omega_0\wedge \overline{\Omega}_0\wedge \sqrt{-1}dy\wedge d\bar{y}|_{\mathfrak{z}=0}
=
\sqrt{-1}A_{0}(\sqrt{-1}\partial \bar{\partial} r^2)^2\wedge dy\wedge d\bar{y}|_{\mathfrak{z}=0} \\
&=A_{0}\prod_i \sqrt{-1} d\mathfrak{z}_i d\bar{\mathfrak{z}}_i,
\end{split}
\]
so up to a constant in $U(1)$, the holomorphic volume form is locally given in $U_1$ by $\Omega=\sqrt{A_{0}}d\mathfrak{z}_1 d\mathfrak{z}_2 d\mathfrak{z}_3(1+ O(\mathfrak{z}))$.

We focus on the fibres over the small local base $\{ |y|<\epsilon_1  \}$ with $\epsilon_1\ll1$. For convenience, we extend the function $r$ on $X_y\cap U_1$ smoothly to the whole $X_y$, such that outside the coordinate neighbourhood $r$ is of order 1. In $X_y\cap U_1$, one has the scaled Eguchi-Hanson metric $EH_y$ given by $\sqrt{-1}\partial \bar{\partial} \sqrt{ r^4+ |y|}$. Then the function $r$ can be thought as a smoothed out version of the distance to the vanishing cycle. This allows us to define the weighted H\"older spaces $C^{k,\alpha}_\beta(X_y)$ on $X_y$.  The weighted H\"older norm of a function $f$ on $X_y$ can be defined by 
\[
\begin{split}
\norm{f}_{C^{k,\alpha}_\beta}=&\norm{f}_{C^{k,\alpha} (X_y\setminus \{r>c\}), \omega_X) } + 
\sum_{j\leq k} \sup_{X_y\cap U_1} r^{ -\beta+j} |\nabla^j_{EH_y} f| \\ +& \sup_{d_{EH_y}(x,x')\ll r(x), x, x'\in X_y\cap U_1} r(x)^{-\beta+\alpha+k} \frac{ |\nabla^k_{EH_y} f (x)- \nabla^k_{EH_y} f (x')|}{d_{EH_y}(x,x')^{\alpha}  },
\end{split}
\]
where the difference of two tensors at nearby points are compared by parallel transport along the unique minimal geodesic joining them. The constant $c$ is meant to be small enough to make $\{r<c\}$ contained in the coordinate neighbourhood $U_1$. Similarly, one can define the $C^{k}_{\beta}(X_y)$ norm by setting $\alpha$ to zero, and it is easy to extend these definition to tensors. A useful feature of H\"older norms, which will be used repeatedly later, is that they are local, namely if the manifold is covered by several regions with some overlap, then it suffices to estimate the H\"older norms on each individual region.

Let $1\ll\Lambda_1\ll \epsilon_1^{-1/4}$ be a large number. Then for any $y$ with $|y|<\epsilon_1$, we can find a diffeomorphism $G_{0,y}$ between $X_y \setminus \{ r< \Lambda_1 |y|^{1/4}   \}$ (namely the complement of a neighbourhood of the vanishing cycle) and an open subset of $X_0\setminus \{ r< \frac{1}{2} \Lambda_1 |y|^{1/4}   \}$. We can demand these diffeomorphisms to depend smoothly on $y$, namely they fit into a fibration preserving diffeomorphism
\begin{equation}\label{diffeomorphismG0}
G_0:   \{ x\in X:  |y|< \epsilon \} \setminus \{  r< \Lambda_1 |y|^{1/4}   \}  \to
U'\subset X_0 \times \{|y|<\epsilon_1  \} .
\end{equation}
(This can be defined, for instance, by flowing along the  vector fields orthogonal to the fibres under the $\omega_X$ metric. Or one can prescribe the diffeomorphism explicitly in the coordinate neighbourhood $U_1$ and try to extend it outside $U_1$, similar to \cite{Spotti}. Many reasonable constructions will satisfy the desired estimates.) The diffeomorphism $G_{0,y}$ is approximately holomorphic: we can arrange so that on $X_y\setminus \{r<\Lambda_1 |y|^{1/4} \}$ where $G_{0,y}$ is defined, 
\[
|\Omega_y- G_{0,y}^* \Omega_0|_{\omega_X} \leq \frac{C|y|}{|\mathfrak{z}|^2} |\Omega_y|_{\omega_X}= \frac{C|y|}{r^4} |\Omega_y|_{\omega_X}.
\]
That is, the variation of complex structure causes an error of order $O(\frac{|y|}{r^4})$.
Furthermore, 
we can compare potentials, holomorphic volume forms and the background metric $\omega_X$ on $X_0$ and $X_y$ to higher order. For instance,
\begin{equation}\label{CYmetriconXyintermediate1}
\begin{split}
& \norm{G_{0,y}^* r^2- r^2}_{  C^k_{-2}( (U_1\cap X_y)\setminus \{r<\Lambda_1 |y|^{1/4} \} )  } \leq C|y|,
\\
& \norm{ G_{0,y}^* \Omega_0 - \Omega_y }_{C^k_{-4}(  X_y\setminus \{r<\Lambda_1 |y|^{1/4} \} )  } \leq C|y|. \\
&
\norm{G_{0,y}^* (\omega_X|_{X_0})- \omega_X|_{X_y} }_{ C^k_{-2}( X_y\setminus \{r<\Lambda_1 |y|^{1/4} \} )   } \leq  C|y|.
\end{split}
\end{equation}
The various power law behaviours can be seen quite easily from dimensional analysis. It's enough to examine what happens inside the coordinate neighbourhood $U_1\subset \C^3$. The point is that to the leading order, expressions like $r^2$, $\Omega_y$ and $\omega_X$ have some homogeneity behaviour under the scaling $\mathfrak{z}\to \lambda \mathfrak{z}$, and the diffeomorphism $G_{0,y}$ would approximately respect this homogeneity, so the problem reduces by scaling to the case with
 $|\mathfrak{z}|\sim 1, |y|\ll1$, where estimates of the above type are clear. This type of arguments will be tacitly used many times later when we assert good properties about diffeomorphisms.

One can now construct an approximate CY metric $\omega'_y$, essentially by gluing the Eguchi-Hanson metric $EH_y$ to $G_{0,y}^* (\omega_{SRF}|_{X_0})$ at scale $r\sim |y|^{1/6}$. The gluing region is then contained in the coordinate neighbourhood $U_1$ because $\epsilon_1\ll1$, and avoids the vicinity of the vanishing sphere $\{r< \Lambda_1|y|^{1/4}\}$.

Let $\gamma_1(s)$ be a cutoff function,
\[
\gamma_1(s)= \begin{cases}
1 \quad \text{if } s>2, \\
0 \quad \text{if } s<1.
\end{cases}
\]
and let $\gamma_2=1-\gamma_1$.
We define 
\begin{equation}\label{CYmetriconXyapproximate}
\omega'_y= \omega_X|_{X_y} + \sqrt{-1} \partial \bar{\partial} \{\gamma_1( \frac{r}{|y|^{1/6}}  ) G_{0,y}^* (\psi_0 -c_0) + \gamma_2( \frac{r}{|y|^{1/6}} )\sqrt{ r^4+|y|}  \}
\end{equation}
The diffeomorphism is well defined on the support of $\gamma_1$, so this expression makes sense. We remark that there can be many minor variants to the gluing ansatz.

\begin{lem}\label{CYmetriconXyintermediatelemma}
If $\epsilon_1\ll1$, then $\omega_y'$ is positive definite, namely a K\"ahler metric, and for $-2\leq\beta<0$ satisfies
\begin{equation*}
\omega_y'^2= \frac{1}{A_y} (1+f_y') \Omega_y\wedge \overline{\Omega}_y , \quad \norm{f_y'}_{  C^{k,\alpha}_{\beta-2}(X_y)        }\leq C(k, \alpha) |y|^{-\frac{1}{6} \beta+ \frac{2}{3}  }.
\end{equation*}
Here the constants are independent of $y$ as long as $|y|<\epsilon_1$. 
\end{lem}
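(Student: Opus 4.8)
The plan is to rewrite the glued potential so as to isolate the new term introduced by the cutoff, then establish positivity and the Monge--Amp\`ere estimate region by region, using that weighted H\"older norms are local and that the relevant scales are well separated. Setting $\phi:= G_{0,y}^*(\psi_0-c_0)-\sqrt{r^4+|y|}$, the definition (\ref{CYmetriconXyapproximate}) reads
\[
\omega_y'=\omega_X|_{X_y}+\sqrt{-1}\partial\bar{\partial}\sqrt{r^4+|y|}+\sqrt{-1}\partial\bar{\partial}(\gamma_1\phi)=\omega_X|_{X_y}+EH_y+\sqrt{-1}\partial\bar{\partial}(\gamma_1\phi),
\]
so $\omega_y'$ equals $\omega_X|_{X_y}+EH_y$ where $\gamma_1\equiv0$ (the core $r\lesssim|y|^{1/6}$) and equals $\omega_X|_{X_y}+\sqrt{-1}\partial\bar{\partial}G_{0,y}^*(\psi_0-c_0)$ where $\gamma_1\equiv1$ (the outer region $r\gtrsim|y|^{1/6}$); the genuinely new contribution $\sqrt{-1}\partial\bar{\partial}(\gamma_1\phi)$ is supported on the gluing annulus $\{|y|^{1/6}\le r\le 2|y|^{1/6}\}$. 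The gluing scale is forced by balancing the two deviations from the cone potential $r^2$: one has $\sqrt{r^4+|y|}-r^2=O(|y|/r^2)$ while $G_{0,y}^*(\psi_0-c_0)-r^2=O(r^4)+O(|y|)$ by $\psi_0=c_0+r^2+O(r^4)$ and (\ref{CYmetriconXyintermediate1}); these match precisely when $r^6\sim|y|$, whence $\phi=O(|y|^{2/3})$ on the annulus, a fact I shall use repeatedly.

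For positivity, $EH_y$ is a genuine (hyperk\"ahler) K\"ahler metric and $\omega_X|_{X_y}>0$, so $\omega_y'>0$ on the core. On the outer region $\omega_y'$ differs from the honest CY metric $G_{0,y}^*(\omega_{SRF}|_{X_0})$ only through the failure of $G_{0,y}$ to be holomorphic and through $\omega_X|_{X_y}-G_{0,y}^*(\omega_X|_{X_0})$; by (\ref{CYmetriconXyintermediate1}) and $|\Omega_y-G_{0,y}^*\Omega_0|\lesssim|y|/r^4$ this is a relative perturbation of size $O(|y|/r^4)\le O(|y|^{1/3})$ there, dominated by the positive $G_{0,y}^*(\omega_{SRF}|_{X_0})$. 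On the annulus the background $\omega_X|_{X_y}+EH_y$ is a cone metric of unit size at the length scale $r\sim|y|^{1/6}$, while the cutoff varies exactly on that scale, so $|\sqrt{-1}\partial\bar{\partial}(\gamma_1\phi)|_{EH_y}\lesssim|y|^{2/3}/(|y|^{1/6})^2=|y|^{1/3}\to0$; hence $\omega_y'>0$ once $\epsilon_1$ is small.

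For the Monge--Amp\`ere error, write $1+f_y'=A_y\,\omega_y'^2/(\Omega_y\wedge\overline{\Omega}_y)$ and estimate on each region. On the outer region $\omega_y'^2\approx G_{0,y}^*\big((\omega_{SRF}|_{X_0})^2\big)=\tfrac1{A_0}G_{0,y}^*(\Omega_0\wedge\overline{\Omega}_0)$, so $f_y'$ is governed by $A_y/A_0-1=O(|y|)$ and by the complex-structure variation $O(|y|/r^4)$; measuring the latter in $C^0_{\beta-2}$ gives $|y|\sup_{r\ge|y|^{1/6}}r^{2-\beta-4}=|y|\cdot|y|^{(-2-\beta)/6}=|y|^{-\beta/6+2/3}$, which is exactly the asserted bound and is saturated at the gluing scale. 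On the core $EH_y^2$ is a constant multiple (namely $\tfrac1{A_0}$, with the present normalisations) of the flat $\Omega_y^{\flat}\wedge\overline{\Omega_y^{\flat}}$, so the error stems only from adding $\omega_X|_{X_y}$ (of relative size $O(r^2)$ there) and from $\Omega_y-\Omega_y^{\flat}=O(\mathfrak{z})\Omega_y^{\flat}$, both subdominant. On the annulus, $\omega_y'^2-(\omega_X|_{X_y}+EH_y)^2=2(\omega_X|_{X_y}+EH_y)\wedge\sqrt{-1}\partial\bar{\partial}(\gamma_1\phi)+(\sqrt{-1}\partial\bar{\partial}(\gamma_1\phi))^2$ has relative size $|y|^{1/3}$, contributing $|y|^{1/3}\cdot\sup_{r\sim|y|^{1/6}}r^{2-\beta}=|y|^{1/3}|y|^{(2-\beta)/6}=|y|^{-\beta/6+2/3}$ to the $C^0_{\beta-2}$ norm, again matching.

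The higher-derivative and H\"older parts of the norm follow from the same bounds after rescaling each region to unit size, where, as the paper's dimensional-analysis principle dictates, all expressions ($r^2$, $EH_y$, $\Omega_y$, $\omega_X$, and the cutoff) have definite homogeneity and the estimates hold uniformly in $y$. I expect the main obstacle to be the gluing annulus: one must check that the cutoff-induced term $\sqrt{-1}\partial\bar{\partial}(\gamma_1\phi)$ never exceeds the outer-region error, which rests squarely on the optimal balance $r\sim|y|^{1/6}$ making $\phi=O(|y|^{2/3})$, and on propagating this through the weighted $C^{k,\alpha}$ norms uniformly in $y$ rather than merely in $C^0$.
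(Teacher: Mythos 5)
Your proposal is correct and follows essentially the same route as the paper: the same three-region decomposition (core where $\gamma_1\equiv 0$, gluing annulus, outer region where $\gamma_1\equiv 1$), the same comparisons (outer region against $G_{0,y}^*(\omega_{SRF}|_{X_0})$ with the $O(|y|/r^4)$ complex-structure error, annulus against $\omega_X|_{X_y}+EH_y$ with the cutoff term $\sqrt{-1}\partial\bar\partial(\gamma_1\phi)$ of relative size $O(|y|^{1/3})$, core against the Eguchi--Hanson volume form with $O(r^2)$ error), and the same weighted bookkeeping saturating at $r\sim|y|^{1/6}$ to give $|y|^{2/3-\beta/6}$, with higher derivatives handled by the scaling/homogeneity principle. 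The only quibbles are cosmetic: $G_{0,y}^*r^2-r^2$ is $O(|y|/r^2)$ rather than $O(|y|)$, and $\sqrt{-1}\partial\bar\partial(\gamma_1\phi)$ is not literally supported on the annulus (only the terms hitting $d\gamma_1$ are), but neither affects your estimates.
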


\begin{rmk}
In particular $|f_y'|_{L^\infty}=O(|y|^{\frac{1}{12}(\beta+2)} )\ll1$, meaning that the nonlinear effect is weak.	
\end{rmk}

\begin{proof}
When $r> 2|y|^{1/6}$, this $\omega_y'$ is just 
$\omega_X|_{X_y} + \sqrt{-1} \partial \bar{\partial}  G_{0,y}^* \psi_0$, which we would like to compare to $G_{0,y}^* ( \omega_X|_{X_0}+ \sqrt{-1} \partial \bar{\partial} \psi_0    )$. To control their difference, we examine 
\[
r^2| \partial \bar{\partial}G_{0,y}^* (\psi_0-r^2-c_0) - G_{0,y}^* \partial \bar{\partial}(\psi_0-r^2-c_0)|_{EH_y}
\leq  \frac{C|y|}{r^4}
 \sum_{j=1}^2 |r^j\nabla^{j}  (\psi_0-r^2-c_0)|            
 \leq C|y|.
\]
The first inequality uses the general observation that the relative error caused by variation of complex structure is of order $O(\frac{|y|}{r^4})$, and the second uses that $|\nabla^k (\psi-c_0-r^2)|=O(r^{4-k})$ on the orbifold $X_0$. This can be contrasted with
\[
r^2| \partial \bar{\partial}G_{0,y}^* r^2 - G_{0,y}^* \partial \bar{\partial}r^2|_{EH_y}
\leq  \frac{C|y|}{r^2} ,
\]
which is the dominant error term for small $r$.
The higher order estimates proceed in the same fashion, and one evantually gets
\[
\norm{ \omega_X|_{X_y} + \sqrt{-1} \partial \bar{\partial}  G_{0,y}^* \psi_0 -   G_{0,y}^* ( \omega_X|_{X_0}+ \sqrt{-1} \partial \bar{\partial} \psi_0    )  }_{ C^k_{-4} (X_y \cap \{  r> 2|y|^{1/6}  \}   ) }  \leq C|y|.
\]
This easily implies the positive definiteness of $\omega_y'$ in this region.
Morever, 
\[
\norm{ \omega_y'^2 - G_{0,y}^* \omega_{SRF}|_{X_0}^2 }_{ C^k_{-4} (X_y \cap \{  r> 2|y|^{1/6}  \}   ) }   \leq C|y|. 
\]
But we know
\[
\omega_{SRF}|_{X_0}^2=A_0^{-1} \Omega_0 \wedge \overline{\Omega}_0, \quad |A_0-A_y|\leq C|y|, \quad \norm{\Omega_y- G_{0,y}^*\Omega_0}_{ C^k_{-4} (X_y \cap \{  r> 2|y|^{1/6}  \}) } \leq C|y|,
\]
so we can assemble the facts to see
\begin{equation}\label{CYmetriconXyintermediate2}
\norm{\omega_y'^2 - A_y^{-1} \Omega_y\wedge \overline{\Omega}_y }_{ C^k_{-4} (X_y \cap \{  r> 2|y|^{1/6}  \})  } \leq C|y| .
\end{equation}

Now we analyse the region $\{|y|^{1/6}< r< 2|y|^{1/6}\}$, where the cutoff error is supported. The term $\omega_{X}|_{X_y}$ is of order $O(r^2)$ small compared to the Eguchi-Hanson metric $EH_y$. To understand the deviation of $\omega_y'$ from $EH_y$, it suffices to examine
 $\sqrt{-1}\partial \bar{\partial} \{ \gamma_1( \frac{r}{|y|^{1/6}}) ( G^*_{0,y}(\psi_0- c_0) - \sqrt{r^4+|y|}         )  \}$. 
We have 
\[
\begin{split}
& \norm{r^2- \sqrt{r^4+|y|} }_{C^k_{-2}(X_y\cap U_1)} \leq C|y|, \\
& \norm{ G_{0,y}^* r^2- r^2}_{C^k_{-2}((U_1\cap X_y) \setminus \{ r< \Lambda_1 |y|^{1/4}  \}  )   } \leq C|y|, \\
& \norm{ G_{0,y}^* (\psi_0-c_0- r^2)}_{C^k_{-2}(U_1\cap X_y) \setminus \{ r< \Lambda_1 |y|^{1/4}  \}  )   } \leq Cr^2 r^4\leq C|y|,
\end{split}
\]
so $\norm{ G^*_{0,y}(\psi_0- c_0) - \sqrt{r^4+|y|}     }_{ C^k_{-2}(\{ |y|^{1/6} < r< 2|y|^{1/6}\}    )  }\leq C|y|$. Using also $\norm{\gamma_1(\frac{r}{|y|^{1/6}}  )}_{C^k_0(X_y) } \leq C $,
we see
\[
\norm{      \partial \bar{\partial} \{ \gamma_1( \frac{r}{|y|^{1/6}}) ( G^*_{0,y}(\psi_0- c_0) - \sqrt{r^4+|y|}         )  \} }_{C^k_{-4}(\{ |y|^{1/6} < r< 2|y|^{1/6}\} )   } \leq C|y|.
\]
In particular, there is a pointwise estimate
\[
|\partial \bar{\partial} \{ \gamma_1( \frac{r}{|y|^{1/6}}) ( G^*_{0,y}(\psi_0- c_0) - \sqrt{r^4+|y|}         )  \} |_{EH_y} \leq  C|y| |y|^{-4/6} =O(|y|^{1/3})=O(r^2).
\]
We observe $r\sim |y|^{1/6}$ is precisely the scale where various error sources are of comparable strength. We can now easily see the positive definiteness of $\omega_y$ in this region.

There is yet another source of error coming from the holomorphic volume form. Since $\Omega= \sqrt{A_0}(1+O(\mathfrak{z})) d\mathfrak{z}_1 d\mathfrak{z}_2 d\mathfrak{z}_3$ where $O(\mathfrak{z})$ is a holomorphic function, and $\Omega=\Omega_y\wedge dy$, we can check from the explicit volume form of the Eguchi-Hanson metric, that 
\[
| \nabla^k_{EH_y} \{ (\sqrt{-1} \partial \bar{\partial} \sqrt{r^4+ |y|} )^2- A_y^{-1} \Omega_y \wedge \overline{ \Omega}_y \}  |_{EH_y} =O( r^{2-k})
\]
Combining these discussions,
\begin{equation}\label{CYmetriconXyintermediate3}
\norm{\omega_y'^2 - A_y^{-1} \Omega_y\wedge \overline{\Omega}_y }_{ C^k_{-4} (X_y \cap \{ |y|^{1/6}< r< 2|y|^{1/6}  \}  } \leq C|y| .
\end{equation}

Finally, when $r< |y|^{1/6}$, the error to the volume form is of order $O(r^2)$ with good higher order estimates. These combined with (\ref{CYmetriconXyintermediate2}), (\ref{CYmetriconXyintermediate3}) imply the claim.
\end{proof}

To perturb the approximate metric $\omega_y'$ into the actual CY metric $\omega_{SRF}|_{X_y}$ on $X_y$, we need the crucial mapping property of the Laplacian on the weighted H\"older spaces.

\begin{lem}(Compare \cite{Spotti} Proposition 3.2)\label{CYmetriconXyHoldermappingproperty}
If $-2<\beta<0$ and $|y|<\epsilon_1$, then the Laplacian $\Lap_{\omega_y'}: C^{k+2,\alpha}_\beta(X_y) \to C^{k,\alpha}_{\beta-2}(X_y)$ restricted to the subspaces of functions with $\int_{X_y} f\omega_y'^2=0$ is an isomorphism, and the inverse satisfies a uniform estimate in $f$ and $y$
\begin{equation}
\norm{\Lap_{ \omega_y'}^{-1} f}_{C^{k+2,\alpha}_\beta(X_y) } \leq
C(k, \alpha, \beta) \norm{ f}_{C^{k,\alpha}_{\beta-2}(X_y) }. 
\end{equation}
\end{lem}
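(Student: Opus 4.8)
The plan is to split the statement into its two halves and notice that only one of them carries content. For each fixed $y\neq0$ the fibre $X_y$ is a compact smooth K3 surface and $\omega_y'$ is a genuine K\"ahler metric by Lemma \ref{CYmetriconXyintermediatelemma}, so $r$ is pinched between $\sim|y|^{1/4}$ and $O(1)$ and the weighted norm is equivalent, with $y$-dependent constants, to the ordinary H\"older norm. Standard elliptic theory on the closed manifold then shows $\Lap_{\omega_y'}$ has kernel and cokernel equal to the constants, hence is an isomorphism between the mean-zero subspaces, and the weighted operator inherits this. The entire substance of the lemma is therefore the \emph{uniform} a priori estimate
\begin{equation}
\norm{f}_{C^{k+2,\alpha}_\beta(X_y)}\leq C\,\norm{\Lap_{\omega_y'} f}_{C^{k,\alpha}_{\beta-2}(X_y)},\qquad \int_{X_y} f\,\omega_y'^2=0,
\end{equation}
with $C=C(k,\alpha,\beta)$ independent of $y$: granting it, the bound on $\Lap_{\omega_y'}^{-1}$ is immediate and triviality of the kernel re-proves injectivity.

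I would prove the uniform estimate by a blow-up/contradiction argument. Suppose it fails: there are $y_i\to0$ and mean-zero $f_i$ with $\norm{f_i}_{C^{k+2,\alpha}_\beta}=1$ but $\norm{\Lap f_i}_{C^{k,\alpha}_{\beta-2}}\to0$. Since the norm is a supremum of weighted quantities, I select base points $x_i$ realizing at least half of it and rescale the metric to unit size at scale $\lambda_i=r(x_i)$. With the normalisation $\hat f_i=\lambda_i^{-\beta}f_i$ the rescaled equation reads $\Lap_{\hat g_i}\hat f_i=\lambda_i^{2-\beta}(\Lap_{\omega_{y_i}'}f_i)$, whose right-hand side is $O(\norm{\Lap f_i}_{C^{k,\alpha}_{\beta-2}})\to0$; interior Schauder estimates then give $C^{k+2,\alpha}_{loc}$-precompactness, so $\hat f_i$ converges to a harmonic limit $\hat f_\infty$ on one of the three model geometries, carrying the global bound $|\hat f_\infty|\le C\hat r^\beta$ inherited from the weight. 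The cases are dictated by $\lambda_i$ relative to $|y_i|^{1/4}$: (i) if $\lambda_i\gtrsim c$ the limit is harmonic on $X_0$ away from the node, and since $\beta>-2=2-\dim_{\R}X_0$ the singularity is removable, so $\hat f_\infty$ is harmonic on the compact orbifold $X_0$, hence constant; the mean-zero condition passes to the limit (the weight $\beta>-2$ makes $r^\beta$ integrable against the $4$-volume), forcing $\hat f_\infty\equiv0$; (ii) if $\lambda_i/|y_i|^{1/4}\to L\in(0,\infty)$ the limit is a harmonic function on the Eguchi--Hanson space decaying at infinity (as $\beta<0$), which vanishes by the maximum principle on a complete manifold with one end; (iii) if $|y_i|^{1/4}\ll\lambda_i\ll c$, including the neck $\lambda_i\sim|y_i|^{1/6}$, the rescaled metrics converge to the flat cone $\R^4/\Z_2$, so $\hat f_\infty$ is harmonic on the punctured cone with $|\hat f_\infty|\le C\hat r^\beta$ on all of $(0,\infty)$. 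In every case $\hat f_\infty\equiv0$, contradicting that the normalisation forces $|\hat f_\infty|$, or the relevant derivative or H\"older quantity, to be $\gtrsim1$ at the base point.

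The crux is case (iii) together with the choice $-2<\beta<0$. Separating $\hat f_\infty$ into spherical harmonics on $S^3/\Z_2$ expresses it as a sum of homogeneous solutions $\hat r^{\,d}$ and $\hat r^{\,-d-2}$ with $d\ge0$; the indicial roots are exactly $\{0,1,2,\dots\}\cup\{-2,-3,\dots\}$, and the open interval $(-2,0)$ contains none of them, so the bound $\hat r^\beta$ at both ends kills every coefficient. Thus $-2<\beta<0$ is precisely the weight interval strictly between the two critical rates of the tangent cone, which is what decouples the scales and makes all three model operators invertible in the relevant range. This is the heart of the matter and the step I expect to be the main obstacle, since it is where the weight restriction is genuinely used.

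One technical point deserves care: in the point selection the half-norm may be attained by the H\"older seminorm rather than by a pointwise value, and there Arzel\`a--Ascoli alone loses a little of the exponent $\alpha$. Rather than pass the $C^{k+2,\alpha}$ seminorm directly to the limit, I would feed $\Lap\hat f_i\to0$ and $\hat f_i\to0$ into the interior Schauder estimate to conclude that the $C^{k+2,\alpha}$ seminorm at the base scale tends to $0$, again contradicting its lower bound. An alternative to the whole scheme would be to construct a parametrix by patching the Green operators of the Eguchi--Hanson space and of the orbifold $X_0$, but the blow-up argument is cleaner and delivers the $y$-independent constant directly.
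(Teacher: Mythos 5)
Your proposal is correct and follows exactly the route the paper itself indicates (its ``proof'' is only a remark citing the weighted Schauder estimate $\norm{u}_{C^{k+2,\alpha}_\beta}\leq C\norm{\Lap u}_{C^{k,\alpha}_{\beta-2}}+C\norm{r^{-\beta}u}_{L^\infty}$ plus a standard blow-up argument, with reference to Spotti): your three rescaling regimes --- the compact orbifold $X_0$, the Eguchi--Hanson space, and the flat cone $\R^4/\Z_2$ --- together with the observation that $(-2,0)$ contains no indicial roots, are precisely the content of that blow-up. The only cosmetic slips are that on $\R^4/\Z_2$ the indicial roots are the \emph{even} integers $\{0,2,4,\dots\}\cup\{-2,-4,\dots\}$ (odd spherical harmonics are killed by the $\Z_2$-quotient), which only strengthens your conclusion, and that one should also dispose of the trivial case where the degenerating sequence has $y_i$ bounded away from $0$.
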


\begin{rmk}
This can be proved using the weighted Schauder estimates
\[
\norm{u}_{  C^{k+2,\alpha}_\beta(X_y) }    \leq C \norm{\Lap u}_{ C^{k,\alpha}_{\beta-2}(X_y)   } + C\norm{r^{-\beta} u}_{ L^\infty  }
\]
and a standard blow up argument. 
\end{rmk}

The implicit function theorem then implies in a standard fashion that

\begin{prop}\label{CYmetriconXy}
(CY metrics on the smoothing of the nodal K3 fibre)
Let $-2<\beta<0$, and $|y|<\epsilon_1\ll1$.
There is a unique potential function $\psi_y'$ with $\int_{X_y} \psi_y' \omega_y'^2=0$, such that
\[
\omega_{SRF}|_{X_y}=\omega_y'+ \sqrt{-1} \partial \bar{\partial} \psi_y', \quad (\omega_{SRF}|_{X_y})^2= A_y^{-1} \Omega_y \wedge \overline{\Omega}_y,
\]
with the uniform estimate in $y$,
\begin{equation}
\norm{ \psi_y'}_{ C^{k+2, \alpha}_\beta   } \leq C(k, \alpha, \beta) |y|^{-\frac{1}{6}\beta+ \frac{2}{3} }.
\end{equation}
\end{prop}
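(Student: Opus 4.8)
The plan is to rewrite the prescribed Monge--Amp\`ere equation as a fixed-point problem for the Laplacian $\Delta_{\omega_y'}$ and solve it by a contraction-mapping argument, keeping all constants uniform in $y$. Writing $\omega_{SRF}|_{X_y}=\omega_y'+\sqrt{-1}\partial\bar\partial\psi$ and using $\omega_y'^2=A_y^{-1}(1+f_y')\Omega_y\wedge\overline{\Omega}_y$ from Lemma \ref{CYmetriconXyintermediatelemma}, the equation $(\omega_y'+\sqrt{-1}\partial\bar\partial\psi)^2=A_y^{-1}\Omega_y\wedge\overline{\Omega}_y$ expands to
\[
\Delta_{\omega_y'}\psi + Q(\psi) = g, \qquad Q(\psi):=\frac{(\sqrt{-1}\partial\bar\partial\psi)^2}{\omega_y'^2},\quad g:=\frac{-f_y'}{1+f_y'}.
\]
Before solving I would record the compatibility that makes this well posed: since $[\omega_y']=[\omega_X|_{X_y}]$ and $\int_{X_y}(\omega_X|_{X_y})^2=1=\int_{X_y}A_y^{-1}\Omega_y\wedge\overline{\Omega}_y$, the two volume forms carry equal total mass, so $\int_{X_y} g\,\omega_y'^2=0$, while $\int_{X_y}Q(\psi)\,\omega_y'^2=0$ automatically by Stokes. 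Hence $g-Q(\psi)$ always lies in the mean-zero subspace on which $\Delta_{\omega_y'}$ is inverted by Lemma \ref{CYmetriconXyHoldermappingproperty}.

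I would then define $T(\psi)=\Delta_{\omega_y'}^{-1}\big(g-Q(\psi)\big)$, mapping the mean-zero part of $C^{k+2,\alpha}_\beta(X_y)$ to itself, and look for a fixed point. The linear input is the $y$-uniform estimate $\|T(0)\|_{C^{k+2,\alpha}_\beta}=\|\Delta_{\omega_y'}^{-1}g\|_{C^{k+2,\alpha}_\beta}\leq C\|g\|_{C^{k,\alpha}_{\beta-2}}$ from Lemma \ref{CYmetriconXyHoldermappingproperty}, combined with $\|g\|_{C^{k,\alpha}_{\beta-2}}\leq C|y|^{-\frac16\beta+\frac23}$, which follows from the error bound of Lemma \ref{CYmetriconXyintermediatelemma} together with the fact that $1+f_y'$ is uniformly bounded away from $0$ in $L^\infty$. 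This fixes the candidate radius $\rho_0=C|y|^{-\frac16\beta+\frac23}$.

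The crux, and the only genuinely delicate point, is the weighted estimate for the quadratic term $Q$. If $\psi\in C^{k+2,\alpha}_\beta$ then $\sqrt{-1}\partial\bar\partial\psi$ has pointwise $EH_y$-norm of size $r^{\beta-2}$, so $Q(\psi)$ naturally lives in $C^{k,\alpha}_{2(\beta-2)}$ --- a \emph{worse} weight than the target $C^{k,\alpha}_{\beta-2}$, since $\beta-2<0$. The resolution is that on $X_y$ the radius is bounded below by the size of the vanishing cycle, $r\geq c|y|^{1/4}$; writing $r^{2(\beta-2)}=r^{\beta-2}\cdot r^{\beta-2}\leq r^{\beta-2}(c|y|^{1/4})^{\beta-2}$ trades the extra negative power of $r$ for a power of $|y|$, yielding
\[
\|Q(\psi)-Q(\psi')\|_{C^{k,\alpha}_{\beta-2}}\leq C|y|^{(\beta-2)/4}\big(\|\psi\|+\|\psi'\|\big)\,\|\psi-\psi'\|.
\]
Thus $T$ contracts on a ball of radius $\rho$ provided $|y|^{(\beta-2)/4}\rho\lesssim 1$, i.e. $\rho\lesssim|y|^{(2-\beta)/4}$. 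Reconciling this with the required radius $\rho_0=C|y|^{-\frac16\beta+\frac23}$ amounts to the exponent inequality $-\tfrac{\beta}{6}+\tfrac23>\tfrac{2-\beta}{4}$, which simplifies to $\tfrac{\beta}{12}+\tfrac16>0$, i.e. \emph{exactly} $\beta>-2$. This is precisely the lower endpoint excluded in the hypothesis, and it explains why the whole construction is run in the range $-2<\beta<0$.

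Granting this, for $|y|$ small enough $T$ is a contraction of the ball of radius $2\rho_0$ in the mean-zero subspace of $C^{k+2,\alpha}_\beta$, so the Banach fixed-point theorem produces a unique $\psi_y'$ there with $\|\psi_y'\|_{C^{k+2,\alpha}_\beta}\leq C(k,\alpha,\beta)|y|^{-\frac16\beta+\frac23}$, which is the claimed bound. Since this norm forces $|\sqrt{-1}\partial\bar\partial\psi_y'|_{\omega_y'}\ll1$ pointwise (again using $r\geq c|y|^{1/4}$), the form $\omega_y'+\sqrt{-1}\partial\bar\partial\psi_y'$ is positive definite, so $\psi_y'$ solves the genuine complex Monge--Amp\`ere equation and defines the Kähler CY metric $\omega_{SRF}|_{X_y}$. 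The normalisation $\int_{X_y}\psi_y'\,\omega_y'^2=0$ is built into the range of $\Delta_{\omega_y'}^{-1}$, and uniqueness among normalised potentials is the standard maximum-principle uniqueness for the complex Monge--Amp\`ere equation.
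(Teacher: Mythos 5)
Your proposal is correct and is essentially the paper's own argument: the paper dispatches this proposition with the one-line remark that Lemma \ref{CYmetriconXyintermediatelemma} (the volume error bound on $f_y'$) and Lemma \ref{CYmetriconXyHoldermappingproperty} (the uniform inverse of $\Lap_{\omega_y'}$ on mean-zero functions) imply the result ``by the implicit function theorem in a standard fashion,'' which is exactly your contraction-mapping scheme. Your write-up supplies the details the paper leaves implicit --- the mean-zero compatibility of $g$ and $Q(\psi)$, and in particular the trade $r\geq c|y|^{1/4}$ converting the bad weight $r^{2(\beta-2)}$ into $|y|^{(\beta-2)/4}r^{\beta-2}$, whose exponent bookkeeping $\frac{\beta+2}{12}>0$ shows why the range $\beta>-2$ is exactly what makes the perturbation close --- and these details are accurate.
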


\begin{rmk}
In particular
 $|\psi_y'|\leq C |y|^{\frac{2}{3} -\frac{1}{6}\beta }r^\beta $, so 
$|\int_{X_y} \psi_y' \omega_X^2| =O(|y|^{\frac{2}{3} -\frac{1}{6}\beta   })$.
There is a different normalisation convention for the potential,
\begin{equation}\label{semiflatmetric1}
\omega_{SRF}|_{X_y} = \omega_X + \sqrt{-1} \partial \bar{\partial} \psi_y, \quad \int_{X_y} \psi_y \omega_X^2=0.
\end{equation}
The advantage of (\ref{semiflatmetric1}) is that it makes sense also for fibres outside $\{|y|<\epsilon_1\}$, so is more useful for the global construction of the semi-Ricci-flat metric.
We have $\psi_y= \psi_y'+ \gamma_1( \frac{r}{|y|^{1/6}}  ) G_{0,y}^* (\psi_0 -c_0) + \gamma_2( \frac{r}{|y|^{1/6}} )\sqrt{ r^4+|y|} +c_0 + c_0'(y) $, where $c_0'(y)$ is a constant on $X_y$ with $|c_0'(y)|\leq C(\beta)|y|^{ \frac{2}{3}- \frac{1}{6}\beta  }$, for any $-2<\beta<0$ and $|y|<\epsilon_1$. 
\end{rmk}

We take the opportunity to consider deformation of the CY metrics $\omega_{SRF}|_{X_y}$ as the complex structure varies with $y$. When $|y|\geq \epsilon_1$ so $X_y$ is bounded away from the singular fibre, then it is a standard fact that the potential $\psi_y$ solving (\ref{semiflatmetric1}) deforms smoothly with $y$. In particular
we can take a trivialisation $G_{y'}$ for the fibration around a given fibre $X_{y'}$, which induce diffeomorphisms  $G_{y',y}$  identifying sufficiently nearby fibres $X_{y}$ with $X_{y'}$, and then the potentials are compared as $|\psi_y-\psi_{y'}|\leq C|y-y'|$. We would like to extend this kind of Lipschitz bound to fibres with $|y'|<\epsilon_1$.

Given a fibre $X_{y'}$ with $|y'|< \epsilon_1$, we can take a fibration preserving trivialisation $G_{y'}$ over the disc $\{ y: |y-y'|\leq \epsilon_2 |y'|  \}$,
\begin{equation}\label{diffeomorphismGy}
G_{y'}:  \{ x\in X: |y'-\pi(x)|\leq \epsilon_2 |y'|  \}  \to
X_{y'} \times \{|y-y'|\leq\epsilon_2 |y'|  \} \subset X_{y'}\times \C .
\end{equation}
This can be defined, for example, by flowing along the vector field
obtained by the orthogonal horizontal lift of tangent vector fields on $Y$, using an ambient metric $\omega_X$. For $|y-y'|\leq \epsilon_2 |y'|$,
this induces the diffeomorphisms $G_{y',y}$ from $X_y$ to $X_{y'}$, depending smoothly on $y$.
 We can demand
\[
|\Omega_{y}- G^*_{y',y}\Omega_{y'}|_{\omega_X} \leq \frac{ C|y-y'|} {  |\mathfrak{z}|^2   }|\Omega_{y}|_{\omega_X}= \frac{C|y-y'|}{ r^4  }|\Omega_{y}|_{\omega_X},
\]
namely the variation of complex structure causes errors of order $O(\frac{|y-y'|}{r^4})$. The analogue of (\ref{CYmetriconXyintermediate1}) is
\[
\begin{split}
& \norm{ G_{y',y}^* \Omega_{y'} - \Omega_{y} }_{C^k_{-4}(  X_y )  }  \leq C|y-y'|. \\
&
\norm{G_{y',y}^* (\omega_X|_{X_{y'}})- \omega_X|_{X_{y}} }_{ C^k_{-2}( X_y )   } \leq  C|y-y'|.
\end{split}
\]
We put an approximate CY metric on $X_{y}$ as
\[
\omega_{y}''= \omega_X|_{X_{y}}+ \sqrt{-1} \partial \bar{\partial} G_{y',y}^* \psi_{y'}.
\]
This can be compared to $G_{y',y}^*(\omega_{SRF}|_{X_{y'}})= G_{y',y}^*(\omega_X|_{X_{y'}} + \sqrt{-1}\partial \bar{\partial} \psi_{y'}  )$. To estimate their difference, the main issue is to control the norm of $G_{y',y}^*( \partial \bar{\partial} \psi_{y'}  )-\partial \bar{\partial} G_{y',y}^* \psi_{y'}$. We first examine the pointwise bound measured against $\omega_{SRF}|_{X_{y}}$.
\[
r^2| G_{y',y}^*( \partial \bar{\partial} \psi_{y'}  )-\partial \bar{\partial} G_{y',y}^* \psi_{y'}| \leq \frac{C|y-y'|}{r^4} 
\sum_{j=1}^2 |r^j\nabla^{j}  \psi_{y'}  |            
\leq \frac{C|y-y'|}{r^2}.
\]
The first inequality uses that the relative error caused by the variation of complex structure is of order $O(\frac{|y-y'|}{r^4} )$, and 
the second inequality makes use of Proposition \ref{CYmetriconXy} and its ensuing Remark to control $\psi_{y'}$.
This estimate can easily be improved to higher orders, to give
\[
\norm{
	\omega''_{y}- G_{y',y}^* (\omega_{SRF}|_{X_{y'}}) }_{C^k_{-4}(X_{y} )   } \leq C|y-y'|.
\]
One can assemble the facts to show for $-2\leq \beta<0$,
\[
\norm{  (\omega''_{y})^2- A_{y}^{-1} \Omega_{y} \wedge \overline{\Omega}_{y} }_{ C^{k, \alpha}_{\beta-2}(X_y)   } \leq C|y-y'||y'|^{-\frac{1}{4}(\beta+2)   }   .
\]
In particular the volume error is $O(\frac{|y-y'|}{r^4})$ small in $L^\infty$ norm, which for $|y-y'|\leq \epsilon_2 |y'| \leq C\epsilon_2 r^4 \ll r^4 $ is small in absolute norm. This signifies that nonlinear effect is weak. Then one can use Lemma \ref{CYmetriconXyHoldermappingproperty} and the implicit function to solve
\[
(\omega''_{y}+ \sqrt{-1} \partial \bar{\partial} \psi_{y}'')^2= A_{y}^{-1} \Omega_{y} \wedge \overline{\Omega}_{y},
\]
with estimate $\norm{\psi_{y}''}_{C^{k+2,\alpha}_\beta(X_{y})} \leq C(k,\alpha, \beta) |y-y'| |y'|^{ -\frac{1}{4}(\beta+2)  } $ for $-2<\beta<0$. Comparing this with
(\ref{semiflatmetric1}), and installing the suitable integral normalisation condition, we get

\begin{lem}\label{Lipschitztypeestimatepotentialfunction}
The function $\psi_y$ defined by (\ref{semiflatmetric1}) satisfies the Lipschitz type estimate: for $-2<\beta<0$,
\begin{equation*}
\norm{ G_{y',y}^*\psi_{y'} -\psi_{y} }_{ C^{k+2,\alpha}_\beta(X_{y})  } \leq C(k,\alpha, \beta) |y-y'| |y'|^{ -\frac{1}{4}(\beta+2)  } 
\end{equation*}
uniformly for $|y'|<\epsilon_1, |y-y'|\leq \epsilon_2 |y'|$.
\end{lem}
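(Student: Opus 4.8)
The plan is to freeze the reference fibre $X_{y'}$, transport its already-constructed Calabi--Yau potential $\psi_{y'}$ to the neighbouring fibre $X_y$ through the trivialisation (\ref{diffeomorphismGy}), and then correct the resulting near-solution by the linear theory. Concretely I would take $\omega_y'' = \omega_X|_{X_y} + \sqrt{-1}\partial\bar\partial G_{y',y}^*\psi_{y'}$ as an approximate Calabi--Yau metric on $X_y$ and aim to show that the true fibrewise metric $\omega_{SRF}|_{X_y}$ differs from it by a potential of the advertised size. The entire estimate is driven by the fact that $G_{y',y}$ is only approximately holomorphic, with its defect concentrated near the vanishing cycle.

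The first step is to quantify the failure of $G_{y',y}$ to intertwine $\partial\bar\partial$, i.e. to bound the commutator $G_{y',y}^*(\partial\bar\partial\psi_{y'}) - \partial\bar\partial G_{y',y}^*\psi_{y'}$. Using the relative complex-structure error $O(|y-y'|/r^4)$ recorded just before the statement, together with the weighted control on $\psi_{y'}$ furnished by Proposition \ref{CYmetriconXy} and its Remark, I would first obtain the pointwise bound $r^2\,|G_{y',y}^*(\partial\bar\partial\psi_{y'}) - \partial\bar\partial G_{y',y}^*\psi_{y'}| \leq C|y-y'|/r^2$ and then its higher-order refinement, giving $\norm{\omega_y'' - G_{y',y}^*(\omega_{SRF}|_{X_{y'}})}_{C^k_{-4}(X_y)} \leq C|y-y'|$. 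Inserting the fibrewise Calabi--Yau equation $(\omega_{SRF}|_{X_{y'}})^2 = A_{y'}^{-1}\Omega_{y'}\wedge\overline{\Omega}_{y'}$ along with $|A_y - A_{y'}|\leq C|y-y'|$ and $\norm{\Omega_y - G_{y',y}^*\Omega_{y'}}_{C^k_{-4}} \leq C|y-y'|$ then turns this into a Monge--Amp\`ere error bound $\norm{(\omega_y'')^2 - A_y^{-1}\Omega_y\wedge\overline{\Omega}_y}_{C^{k,\alpha}_{\beta-2}(X_y)} \leq C|y-y'|\,|y'|^{-\frac14(\beta+2)}$.

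With the error in hand, the correction is the standard implicit-function-theorem argument. Since $\omega_y''$ is uniformly equivalent to $\omega_y'$, the uniform invertibility of the Laplacian on weighted spaces furnished by Lemma \ref{CYmetriconXyHoldermappingproperty} applies to $\omega_y''$ as well; and the nonlinearity is negligible because the volume error is $O(|y-y'|/r^4)$ in $L^\infty$, which under the hypothesis $|y-y'|\leq\epsilon_2|y'|\leq C\epsilon_2 r^4$ stays $\ll 1$ in absolute terms even on the Eguchi--Hanson core $r\sim|y'|^{1/4}$. This produces $\psi_y''$ with $(\omega_y'' + \sqrt{-1}\partial\bar\partial\psi_y'')^2 = A_y^{-1}\Omega_y\wedge\overline{\Omega}_y$ and $\norm{\psi_y''}_{C^{k+2,\alpha}_\beta(X_y)}\leq C|y-y'|\,|y'|^{-\frac14(\beta+2)}$. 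By uniqueness of the solution to the fibrewise Monge--Amp\`ere equation, $\omega_y''+\sqrt{-1}\partial\bar\partial\psi_y''=\omega_{SRF}|_{X_y}$; unwinding the two potentials and converting from the $\omega_y''$-normalisation to the $\omega_X$-normalisation of (\ref{semiflatmetric1}), which only shifts $\psi_y$ by an additive constant, yields the claimed estimate on $G_{y',y}^*\psi_{y'}-\psi_y$.

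The main difficulty I anticipate is keeping every constant genuinely independent of $y'$ as $y'\to 0$, which is exactly what the weight $|y'|^{-\frac14(\beta+2)}$ records. Its origin is transparent: the error $O(|y-y'|/r^4)$, measured in the $C^0_{\beta-2}$ norm, carries the weight $r^{2-\beta}$, so it equals $|y-y'|\sup r^{-2-\beta}$, and since $-2<\beta<0$ this supremum is attained at the smallest admissible scale $r\sim|y'|^{1/4}$, producing $|y'|^{-\frac14(2+\beta)}$. Ensuring uniformity therefore requires that all preparatory estimates be expressed in the rescaled Eguchi--Hanson geometry and that the linear theory of Lemma \ref{CYmetriconXyHoldermappingproperty} be uniform down to $y'\to 0$; the scaling constraint $|y-y'|\leq\epsilon_2|y'|$ is precisely what makes the perturbation small on that innermost scale and lets the implicit function theorem close uniformly in $y'$.
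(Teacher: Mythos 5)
Your proposal is correct and follows essentially the same route as the paper: grafting $\psi_{y'}$ to $X_y$ via $G_{y',y}$ to form $\omega_y''$, bounding the $\partial\bar\partial$-commutator error of order $O(|y-y'|/r^4)$ with the weighted control on $\psi_{y'}$, assembling the Monge--Amp\`ere error with the $C^{k,\alpha}_{\beta-2}$ weight producing the $|y'|^{-\frac{1}{4}(\beta+2)}$ factor, and closing with Lemma \ref{CYmetriconXyHoldermappingproperty}, the implicit function theorem, uniqueness, and the normalisation shift. Your accounting of where the weight factor originates (the supremum of $r^{-2-\beta}$ at the innermost scale $r\sim|y'|^{1/4}$) matches the paper's computation.
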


\begin{rmk}
As mentioned before, for $|y'|\geq \epsilon_1, |y-y'|\leq \epsilon_2|y'|$, we have the easier analogue: for $-2<\beta<0$,
\[
\norm{G^*_{y',y} \psi_{y'}- \psi_{y} }_{C^{k+2,\alpha}(X_{y})} \leq C(k,\alpha, \beta) |y-y'|.
\]
Here we can use the usual H\"older norm, and it is understood that $y,y'\in Y$ do not come close to other critical values in $S$. When $y, y'$ go beyond the coordinate neighbourhood, then $|y-y'|$ is replaced by the qualitatively similar expression $d_{\omega_Y}(y,y')$.
\end{rmk}

\begin{rmk}\label{Scaleofdeformation}
Comparing Proposition \ref{CYmetriconXy} and Lemma \ref{Lipschitztypeestimatepotentialfunction}, if we consider $|y-y'|\sim t^{1/2}$, then $\psi_y'$ and $G_{y',y}^*\psi_{y'}- \psi_{y}$ have comparable norm estimates when $|y|\sim t^{ \frac{6}{14+\beta}  }$.
\end{rmk}

\subsection{Geometry of the model metric $\omega_{\C^3}$ }\label{GeometryofthemodelmetricC3}

We give a quick review of the model CY metric $\omega_{\C^3}$ on $\C^3$, based on \cite{Li}\cite{Gabor}. Let $\C^3$ be equipped with the standard coordinates $z_1, z_2, z_3$ and a Hermitian structure $|\cdot|$. Define the functions 
\[
\begin{cases}
R=( |z_1|^2+ |z_2|^2+ |z_3|^2)^{1/4},	\\
\tilde{y}=z_1^2+z_2^2+z_3^2, \\
\rho=\sqrt{ |\tilde y|^2 + \sqrt{ R^4+1} }
\end{cases}
\]
Here $\tilde{y}$ gives the structure of the standard Lefschetz fibration on $\C^3$ over $\C_{\tilde{y}}$. Then there exists a CY metric $\omega_{\C^3}= \sqrt{-1} \partial \bar{\partial} \phi_{\C^3}$ on $\C^3$, with volume normalisation
\[
\omega_{\C^3}^3= \frac{3}{2} \prod_{i=1}^3 \sqrt{-1} dz_i \wedge d\bar{z}_i,
\]
and the leading order asymptote at infinity is given by
\begin{equation}
\phi_{\infty}= \frac{1}{2} |\tilde{y}|^2 + \sqrt{ R^4+ \rho      }, \quad \phi_{\C^3}=\phi_\infty+ \phi_{\C^3}'.
\end{equation}
Outside $\{|z|<1\}$ the function $\rho$ is uniformly equivalent to the $\omega_{\C^3}$-distance to the origin.  The distance to the vanishing cycles $\{ R^4=|\tilde{y}| \}$ is controlled by the function $R$ away from a large compact set, and the sizes of the vanishing cycles grow as $O(|\tilde{y}|^{1/4})$.

One can understand the asymptotic metric $\sqrt{-1}\partial \bar{\partial} \phi_\infty$ as follows. The term $\frac{1}{2}|\tilde{y}|^2$ pulls back the potential of the Euclidean metric on $\C_{\tilde{y}}$. This contribution is the dominant term for the horizontal component of the metric. When restricted to the fibres of the Lefschetz fibration,  there is the term  $\sqrt{ R^4+ \rho}$. This is an approximation to the potential of the Eguchi-Hanson metric on the fibre, which is $\sqrt{R^4+ |\tilde{y}|}$. Thus $\sqrt{-1}\partial \bar{\partial}\phi_\infty$ can be viewed as a regularised version of a semi-Ricci-flat metric.

The metric $\omega_{\C^3}$ exhibits 3 different characteristic behaviours.
It is a complete Ricci flat metric with singular tangent cone at infinity $\C^2/\Z_2\times \C$. The singular line $\{0\}\times \C$ of the tangent cone corresponds roughly to the vicinity of the vanishing cycles. However, if we place a sequence of points on the vanishing cycles, scale down $\omega_{\C^3}$ by a factor of  $|\tilde{y}|^{1/4}$, and let $\tilde{y}$ move to infinity, then the pointed Gromov-Hausdorff limit is $EH_1\times \C$, where $EH_1$ is the standard Eguchi-Hanson metric. On the other hand, inside the ball $\{|z|<1\}$ the metric $\omega_{\C^3}$ is uniformly equivalent to the Euclidean metric $\sqrt{-1}\sum dz_i\wedge d\bar{z}_i $.

We now follow \cite{Gabor} to define the double weighted H\"older space $C^{k, \alpha}_{\delta, \tau}(\C^3, \omega_{\C^3})$  taylored to this mixture of behaviours. Let $\kappa$ be a fixed small positive number, and $K$ be a fixed large number. We define a weight function $w$ by
\[
w=\begin{cases}
1 \quad\quad & \text{if } R\geq \kappa \rho, \\
\frac{ R}{ \kappa \rho} & \text{if } R \in ( \kappa^{-1} \rho^{1/4}, \kappa \rho       ),
\\
\kappa^{-2}\rho^{-3/4} & \text{if } R\leq  \kappa^{-1} \rho^{1/4}.
\end{cases}
\]
The H\"older seminorm of a tensor $T$ is given by
\[
[T]_{0,\alpha}=\sup _{\rho(z)>K  } \rho(z)^\alpha w(z)^\alpha \sup_{z\neq z', z'\in B(z,c R(z))} \frac{  |T(z)- T(z')|}{d(z,z')^\alpha }.
\]
Here $c>0$ is such that the metric balls $B(z, cR(z))$ have bounded geometry and are geodesically convex, so we can compare $T(z)$ with $T(z')$ using parallel transport along a geodesic.
The weighted norm of a function $f$ is then defined by
\begin{equation*}
\norm{f}_{C^{k,\alpha}_{\delta, \tau } }= \norm{f}_{C^{k,\alpha}(\rho<2K) } +\sum_{j=0}^k \sup_{\rho(z)>K} \rho^{-\delta+j} w^{-\tau+j} |\nabla^j f| +[\rho^{-\delta+k} w^{-\tau+k} \nabla^k f ]_{0,\alpha}.
\end{equation*}
Then the deviation $\phi_{\C^3}'$ of $\phi_{\C^3}$ from its asymptotic expression $\phi_\infty$ is (\cf Proposition 6.9 \cite{Li} for a more refined version, which extracts the leading term in $\phi_{\C^3}'$)
\begin{equation}\label{phiC3error}
\norm{\phi_{\C^3}'}_{C^{k,\alpha}_{\delta, 0}  } \leq C(k, \alpha, \delta),   \quad  \forall \delta>-1.
\end{equation}

\begin{rmk}
It is an essential prerequisite for our main gluing construction that the model metric $\omega_{\C^3}$ is unique in its asymptotic class; more precisely, if $\phi_{\C^3, 1}'$ and $\phi_{\C^3,2}'$ both satisfy the bound (\ref{phiC3error}) for some $\delta<0$, and $\phi_{\C^3, i}= \phi_{\infty}+ \phi_{\C^3,i}'$ for $i=1,2$ both have the same Calabi-Yau volume form, then $\phi_{\C^3,1}=\phi_{\C^3,2}$. To see this, set $u= \phi_{\C^3,1}'-\phi_{\C^3,2}'$, then standard integration by part argument shows
\[
\int_{\C^3} |\nabla |u|^p |^2 \omega_{\C^3}^3=0, \quad p\gg1.
\]
Here we crucially need the decay property of $u$ at infinity to drop boundary terms. It remains an interesting question what is the most general class of potentials for which one can prove uniqueness.
\end{rmk}

We next describe (heuristically) how this model metric on $\C^3$ fits into $X$. From Section \ref{CYmetriconXy}, we see that in $U_1\cap \{|y|< \epsilon_1 \}$, the Calabi-Yau metrics on fibres $\omega_{SRF}|_{X_y}$ are approximately the Eguchi-Hanson metrics. From Section \ref{ThegeneralisedKahlerEinsteinmetric} the generalised KE metric is $\tilde{\omega}_Y=A_y \sqrt{-1}dy\wedge d\bar{y}$. Thus the semi-Ricci-flat metric is approximately (\cf (\ref{semiflatmetric0}))
\[
\omega_{SRF}\sim \sqrt{-1}\partial \bar{\partial} \sqrt{ r^4+ |y|}+ \frac{1}{t} \tilde{\omega}_Y \sim \sqrt{-1}\partial \bar{\partial} (\sqrt{ r^4+ |y|}+ \frac{1}{t} A_0  |y|^2).
\]  

\begin{rmk}\label{regularisationisnecessary}
This expression is discontinuous for $y=0$, namely on the nodal fibre, due to the non-differentiability of $|y|$ with respect to $y$. We shall deal with this problem later in Section \ref{Regularisingthesemiflatmetric} by regularisation of the metric.
\end{rmk}

Now we perform the coordinate change 
\begin{equation}\label{embedC3toX}
\mathfrak{z}_i= (\frac{t}{2A_0})^{1/3} z_i, \quad r=(\frac{t}{2A_0})^{1/6} R, \quad y=(\frac{t}{2A_0})^{2/3}\tilde{y},
\end{equation}
so that
\[
 \sqrt{ r^4+ |y|}+ \frac{1}{t} A_0  |y|^2= (\frac{t}{2A_0})^{1/3} \{ 
\sqrt{ R^4+ |\tilde{y}| }+ \frac{1}{2} |\tilde{y}|^2
\}  \sim  (\frac{t}{2A_0})^{1/3}\phi_\infty,
\]
where in the last step we are viewing $\phi_\infty$ as a regularised version of the non-smooth expression $\sqrt{ R^4+ |\tilde{y}| }+ \frac{1}{2} |\tilde{y}|^2$. We see that when we simultaneously scale the coordinates and the metric, then the leading asymptote of $\phi_{\C^3}$ in some sense matches up with the local behaviour of the semi-Ricci-flat metric.

More formally, we can view (\ref{embedC3toX}) as defining an explicit embedding map of a large open Euclidean ball $\{ |z|\lesssim t^{-1/3}   \} \subset  \C^3$ complex isomorphically onto $U_1\cap \{ |y|<\epsilon_1  \}\subset X$:
\begin{equation}
F_t: F_t^{-1}( U_1\cap \{ |y|< \epsilon_1  \}   )\subset \C^3 \to U_1\cap \{ |y|< \epsilon_1  \}\subset X.
\end{equation}
The expected behaviour is that the scaled model metric $ (\frac{t}{2A_0})^{1/3}  \omega_{\C^3}$ describes the Calabi-Yau metric $\tilde{\omega}_t$ on $U_1\cap \{ |y|<\epsilon_1  \}$ up to small error. Notice due to the prescriptions on scaling behaviours, the Euclidean ball  $\{ |z|<1  \}\subset  \C^3$ would correspond to a region in $X$ of length scale $\sim t^{1/6}$, which is the `quantisation scale' we referred to in the introduction.

\subsection{Weighted H\"older spaces on $X$}\label{WeightedHolderspacesonX}

In section \ref{Regularisingthesemiflatmetric} we shall construct an approximate CY metric $\omega_t$ on $X$, and estimate the error of its volume form. Since the actual construction is rather complicated, it is helpful to keep in mind the following rather crude picture:

\begin{itemize}
\item In the region $U_1\cap \{ |y|< \epsilon_1  \}\simeq F_t^{-1}(U_1\cap \{ |y|< \epsilon_1  \}   )$, the metric $\omega_t$ is approximately $(\frac{t}{2A_0})^{1/3}  \omega_{\C^3}$. 
\\
\item For $|y|< \epsilon_1$, but staying suitably away from the vanishing cycles in the $X_y$ fibres, the region can be identified via the diffeomorphism $G_{0}$   with a subset of the product space $X_0\times \{ |y|< \epsilon_1   \}$ (\cf (\ref{diffeomorphismG0})), and
the metric $\omega_t$ is approximately the product metric $\omega_{SRF}|_{X_0}+ \frac{ A_0 }{t}\sqrt{-1}dy\wedge d\bar{y} $.
\\
\item For $|y|>\frac{1}{2}\epsilon_1$, the metric $\omega_t$ is essentially the semi-Ricci-flat metric $\omega_{SRF}$. Since we are staying away from singular fibres $\omega_{SRF}$ is uniformly equivalent to $\omega_X+ \frac{1}{t}\tilde{\omega}_Y$.
\end{itemize}
We comment that on the overlap of the first two regions the common behaviour is described by $(\frac{t}{2A_0})^{1/3}\sqrt{-1}\partial \bar{\partial} \phi_\infty$. Similarly, there is some transition behaviour between the first two regions and the third region.

 The purpose of this section is to introduce the weighted H\"older spaces on $X$, adapted to these local geometries.

We first set up the weighted H\"older spaces $C^{k,\alpha}_{\delta, \tau}(X_0\times \C)$ on $X_0\times \C$ equipped with the product metric $\omega_{SRF}|_{X_0}+ \frac{1}{t} A_0 \sqrt{-1}dy\wedge d\bar{y}$. It is convenient to substitute the variable $\zeta= (\frac{t}{2A_0})^{-1/2}y$, so the metric becomes $\omega_{SRF}|_{X_0}+ \frac{1}{2}\sqrt{-1} d\zeta\wedge d\bar{\zeta}$. Recall on $X_0$ we have a function $r$, uniformly equivalent to the distance to the node. Now place the origin at $\zeta=0$ on the nodal line of $X_0\times \C$. Define $\rho'=\sqrt{r^2+ |\zeta|^2}$, and 
\[
w'=\begin{cases}
1 \quad \quad   &\text{if } r> \kappa \rho', \\
\frac{r}{ \kappa \rho' } &\text{if } r\leq \kappa \rho'. 
\end{cases}
\]
We define
the weighted H\"older norm on $X_0\times \C$ by
\begin{equation}
\norm{f}_{C^{k,\alpha}_{\delta, \tau }(X_0\times \C) }=  \sum_{j=0}^k \sup \rho'^{-\delta+j} w'^{-\tau+j} |\nabla^j f| +[\rho'^{-\delta+k}  w'^{-\tau+k} \nabla^k f ]_{0,\alpha}.
\end{equation}
where for any tensor $T$,
\[
[T]_{0,\alpha}=\sup_{d(x,x')\ll r(x) } \rho'(x)^\alpha  w'(x)^\alpha \frac{|T(x)-T(x')|}{d(x,x')^\alpha  }
\]
These weighted norms are adapted to viewing $X_0\times \C$ as having a local conical singularity at the origin with singular link, and are designed to resemble the weighted H\"older spaces for $(\C^3, \omega_{\C^3})$.

Define the set $U_2= \{ |y|< \epsilon_1,  r> \Lambda_1 |y|^{1/4}, r>  t^{1/6}    \}\subset X$, which can be identified via the diffeomorphism $G_0$ with an open subset $G_0(U_2)\subset X_0\times \C$. This allows one to compute the weighted H\"older norm on $U_2\simeq G_0(U_2)$. Similarly one can compute the weighted H\"older norm on $U_1$  by viewing it as $F_t^{-1}(U_1)\subset \C^3 $,  using the metric $\omega_{\C^3} $. Let $U_3=\{ x\in X: |y|>  \frac{\epsilon_1}{2}  \} \subset X$ be the subset of $X$ staying away from all singular fibres.  On $U_3$ it makes sense to compute the usual $C^{k,\alpha}$ norm using the metric $\omega_X + \frac{1}{t} {\omega}_Y$.

Now we can define the weighted H\"older spaces $C^{k,\alpha}_{\delta, \tau,t}(X)$. The weighted norm is
\begin{equation}\label{weightedHoldernorm}
\norm{f}_{C^{k,\alpha}_{\delta,\tau,t}(X)} =t^{-\frac{\delta}{6}} \norm{f}_{ C^{k,\alpha}_{\delta,\tau}(U_1, \omega_{\C^3})    }+  \norm{f}_{ C^{k,\alpha}_{\delta,\tau}(U_2) }
	+ t^{\frac{1}{2}(\delta-\tau)   } \norm{f}_{ C^{k,\alpha}(U_3) } 
\end{equation}
Similarly, one can define the $C^{k}_{\delta, \tau,t}$ norm, namely by setting $\alpha$ to zero. The definitions also extend to tensors, with a sutble twist to the powers of $t$ to maintain compatibility with differentiation. For instance, for a 2-form $\theta$
\[
\norm{\theta }_{C^{k,\alpha}_{\delta-2,\tau-2,t}(X)} =t^{-\frac{\delta}{6}} \norm{\theta}_{ C^{k,\alpha}_{\delta-2,\tau-2}(U_1, \omega_{\C^3})    }+  \norm{\theta}_{ C^{k,\alpha}_{\delta-2,\tau-2}(U_2) }
+ t^{\frac{1}{2}(\delta-\tau)   } \norm{\theta}_{ C^{k,\alpha}(U_3) }.
\]

To see (\ref{weightedHoldernorm}) is a reasonable definition, we can check that on the mutual overlap of $U_1$, $U_2$ and $U_3$, the different definition of norms are equivalent up to a bounded factor independent of $t$.  On $U_1\cap U_2$, the metric $t^{1/3}\omega_{\C^3}\sim t^{1/3}\sqrt{-1} \partial \bar{\partial} \phi_\infty$ is uniformly equivalent to the metric $G_0^*(\omega_{SRF}|_{X_0}+ \frac{1}{t}A_0 \sqrt{-1}dy\wedge d\bar{y}   ) $. 
The weight functions are related on $U_1\cap U_2$, up to bounded factors, by
\[
\rho'\sim t^{1/6} \rho, \quad r\sim t^{1/6} R, \quad w'\sim w.
\] 
This is enough to conclude the equivalence of $t^{-\delta/6}\norm{\cdot}_{C^1_{\delta,\tau}(U_1, \omega_{\C^3}) }$ with $\norm{\cdot}_{C^1_{\delta,\tau}(U_2)} $ on $U_1\cap U_2$. The higher order equivalence is similar. Likewise with $U_1\cap U_3$ and $U_2\cap U_3$.

\begin{rmk}\label{weightedHoldernormcomputationtechnique}
If we focus on a normal neighbourhood region close to a given fibre $X_{y'}$ with $|y'|\gtrsim t^{1/2}$ (so that $\rho$ is predominantly $|\tilde{y}|$), we can take a nice trivialisation around $X_{y'}$, use the product metric $\omega_{SRF}|_{X_{y'}}+ \frac{1}{t} A_{y'} \sqrt{-1}dy\wedge d\bar{y}$ on $X_{y'}\times \C$ to measure the magnitudes of higher derivatives, and then turn on suitable weights
$\rho'^\delta w'^\tau\sim \rho'^{\delta-\tau} r^\tau \sim (t^{-1/2}|y|)^{\delta-\tau} r^\tau  $. This would give an equivalent definition of the weighted H\"older norm in this region up to a bounded factor independent of $t$.
\end{rmk}

\subsection{Regularising the semi-Ricci-flat metric}\label{Regularisingthesemiflatmetric}

The aim of this section is to produce an approximate CY metric $\omega_t$ on $X$. The heuristic idea, as explained in section \ref{GeometryofthemodelmetricC3}, is to glue a scaled copy of $\omega_{\C^3}$ to the semi-Ricci-flat metric $\omega_{SRF}$. This is complicated by the need to regularise $\omega_{SRF}$, pointed out in Remark \ref{regularisationisnecessary}. The rough idea of this regularisation is to replace $\omega_{SRF}$ by local product metrics when we are far from the vanishing cycles, and utilise the construction of the model metric on $\C^3$ when we are close to the vanishing cycles.

To save writing, we will pretend there is only one nodal fibre for $\pi$, although the presence of many nodal fibres causes no extra difficulty. We define a partition of unity $\{\chi_i\}_{i=0}^N$ on the base $Y$, such that 
$\chi_0=1$ on $\{ |y|\leq t^{ \frac{6}{14+\tau } }  \}$ and the support of $\chi_0$ is contained in $\{ |y|\leq 2t^{ \frac{6}{14+\tau } }  \}$. For $1\leq i\leq N$,
the supports of $\chi_i$ are contained in the complement of $\{ |y|\geq t^{ \frac{6}{14+\tau } }  \}$ for some fixed number $-2<\tau<0$, each having length scale $t^{1/2}$ in the $\omega_Y$ metric, containing a point $y_i$ which we think of as the centre of that support.   We can demand that $0\leq \chi_i\leq 1$, and all these $\chi_i$ have uniform $C^k$ bounds with respect to the metric $\frac{1}{t}\omega_Y$ for any given positive integer $k$. Morever, at each point in $Y$ the number of non-vanishing $\chi_i$ is bounded independent of $t$, even though $N\sim O(\frac{1}{t} )$.

We can now write down the metric ansatz $\omega_t$ as 
\begin{equation}\label{omegat}
\begin{split}
&\omega_t= \omega_X+ \frac{1}{t} \tilde{\omega}_Y + \sqrt{-1}\partial \bar{\partial}
\{
\sum_{i=1}^{N} \chi_i G_{y_i}^* \psi_{y_i} + 
 \chi_0(c_0+ \\
& \gamma_1(  \frac{r}{ t^{1/10}+ t^{\frac{1}{12} }\rho'^{1/6}  }     ) G_0^* (\psi_0-c_0)  + \gamma_2( \frac{r}{ t^{1/10}+ t^{\frac{1}{12}} \rho'^{1/6}  }   ) (\frac{t}{2A_0})^{1/3} ( \phi_{\C^3}' + \sqrt{R^4+ \rho  } )   )
\}
\end{split}
\end{equation}


\begin{rmk}\label{omegatmeaning}
We explain the meaning of this construction, in the order of decreasing length scales, before carrying out the error estimates. The fact that $\omega_t$ is indeed a K\"ahler metric, namely it is positive definite, will be clear in the course of these estimates. As a caveat $\omega_t$ is not smooth, due to the non-smoothness of $\tilde{\omega}_Y$ (\cf section \ref{ThegeneralisedKahlerEinsteinmetric}).

\begin{itemize}
\item When $|y|\geq 2t^{\frac{6}{14+\tau} }$, including in particular $|y|\geq \epsilon_1$, we are far from the singular fibre, and the construction is $\omega_t= \omega_X+ \frac{1}{t} \tilde{\omega}_Y+ \sqrt{-1}\partial \bar{\partial}
\sum_{i=1}^{N} \chi_i G_{y_i}^* \psi_{y_i}$. We recall from (\ref{semiflatmetric1}) that $\psi_{y_i}$ is the potential of the Calabi-Yau metric on $X_{y_i}$, which we can graft to its nearby fibres using the diffeomorphism $G_{y_i}$ (here $G_{y_i}$ is well defined over the support of $\chi_i$, and only a small number of $\chi_i$ actually contribute around a given fibre $X_y$). The resulting $\omega_t$ is very close to the semi-Ricci-flat metric. Remark \ref{Scaleofdeformation} explains the special choice of power $t^{\frac{6}{14+\tau} }$. 
\\

\item When $t^{\frac{6}{14+\tau   } }\leq |y|\leq 2t^{  \frac{6}{14+\tau   }      }$, the metric $\omega_t$ starts to receive contribution from the nodal fibre $X_0$ (here the diffeomorphism  $G_0$ is well defined on the support of the cutoff functions and is used to graft the potential on $X_0$ to $X_y$), but the fluctuation effect of $\omega_{\C^3}$ is not yet significant. The expression inside $\chi_0$ plays the same role as the potential of the approximate metric $\omega_y$ on $X_y$ as in  (\ref{CYmetriconXyapproximate}).
\\

\item When $|y|< t^{ \frac{6}{14+\tau}  }$ but $r> (t^{1/10}+ t^{1/12}  \rho'^{1/6})$, the metric $\omega_t$ is essentially
\[
\omega_t\sim \omega_X+ \frac{1}{t}\tilde{\omega}_Y + \sqrt{-1} \partial \bar{\partial} G_0^* \psi_0,
\]
which is approximately the product metric on $U_2\subset X_0 \times \C$. We now summarize the basic numerical properties of the cutoff scales.  For $|y| \gtrsim t^{3/5  } $, namely $|\tilde y|\gtrsim t^{-1/15}$, the term $t^{ 1/12 }\rho'^{1/6}\geq |y|^{1/6}$ dominates the term $t^{1/10}$, so the cutoff scale of $\gamma_1(\frac{r}{t^{1/10}+ t^{1/12}\rho'^{1/6}  } )$ is comparable to the cutoff scale of $\gamma_1( \frac{r}{|y|^{1/6}  }    )$ in agreement with the gluing scale for $\omega_y$ in (\ref{CYmetriconXyapproximate}), explaining our choices of exponents in the cutoff functions.
For $|y|< t^{3/5}$, the cutoff scale of $\gamma_1( \frac{r}{t^{1/10}+ t^{1/12}\rho'^{1/6}  }    )$ is comparable to the cutoff scale of $\gamma_1( \frac{r}{t^{1/10}  }    )$, deviating from the gluing scale of $\omega_y$. The transition between these two behaviours happens at $|y|\sim t^{3/5}$, for which the cutoff scale is $r\sim t^{1/10}, \rho'\sim t^{1/10} $, and in terms of the coordinates on $\C^3$ this means $\rho\sim R\sim t^{-1/15}$. The fact that $\rho$ is comparable to $R$ indicates that the effect of regularisation on the semi-Ricci-flat metric becomes appreciable.
\\

\item
When $|y|< t^{\frac{6}{14+\tau }   }$, and $r< t^{1/10}+ t^{1/12}\rho'^{1/6}$, the metric is
\[
\omega_t= \omega_X+ \frac{1}{t} \tilde{\omega}_Y + \sqrt{-1}\partial \bar{\partial} (\frac{t}{2A_0})^{1/3} (\phi_{\C^3}'+ \sqrt{R^4+\rho}).
\]
We remark that this region is contained in $U_1$, so we can freely use the coordinates on $F_t^{-1}(U_1)\subset \C^3$.
If we replace $\frac{1}{t}\tilde{\omega}_Y$ by its leading term 
\[
\frac{1}{t} A_0\sqrt{-1} dy\wedge d\bar{y}= (\frac{t}{2A_0} )^{1/3} \frac{\sqrt{-1}}{2}d\tilde{y} \wedge d\bar{\tilde{y}}=  (\frac{t}{2A_0} )^{1/3} \sqrt{-1} \partial \bar{\partial}  (\frac{1}{2}|\tilde{y}|^2),
\]
then we can recognise that 
\[
\omega_t\sim \omega_X + (\frac{t}{2A_0})^{1/3}\sqrt{-1}\partial \bar{\partial}\{
\sqrt{R^4+ \rho}+ \frac{1}{2}|\tilde{y}|^2+ \phi_{\C^3}'\}= \omega_X+ (\frac{t}{2A_0})^{1/3}\omega_{\C^3}.
\]
But $\omega_X$ is in fact far smaller than $(\frac{t}{2A_0})^{1/3}\omega_{\C^3}$, so we are left with $\omega_t\sim (\frac{t}{2A_0} )^{1/3} \omega_{\C^3}$. As explained in section \ref{GeometryofthemodelmetricC3}, this region contains the subset  $ \{ r\lesssim t^{1/6}, R\lesssim 1, |\tilde{y}|\lesssim 1, |y|\lesssim t^{2/3} \}$ at the `quantisation scale', where the semi-Ricci-flat approximation breaks down completely. 
\end{itemize}
\end{rmk}

We now turn to the error estimates, and start with the regions where the semi-Ricci-flat behaviour is dominant. We first calculate how much the metric $\omega_t$ restricted in the fibre direction deviates from the Calabi-Yau metric on the fibres. This is a familiar problem given the work in section \ref{CYmetriconXy}, so we will only indicate main modifications.

\begin{lem}
Fix $-2<\tau<0$.
When $|y|\geq t^{ \frac{6}{14+\tau}  }$, the deviation of $\omega_t|_{X_y}$ from the CY metric $\omega_{SRF}|_{X_y}$ is estimated by
\begin{equation}\label{omegatfibrewiseerror1}
\norm{\omega_t|_{X_y}- \omega_{SRF}|_{X_y} }_{ C^{k, \alpha}_{\tau-2}(X_y)  } \leq C(k,\alpha, \tau)t^{1/2} |y|^{-\frac{1}{4}(\tau+2)   }.
\end{equation}
When $|y|< t^{\frac{6}{14+\tau}   }$, but $r\gtrsim t^{1/10}+ t^{1/12}\rho'^{1/6}$, the deviation of $\omega_t|_{X_y}$ from $G^*_{0,y}(\omega_{SRF}|_{X_0})$ is estimated by
\begin{equation}\label{omegatfibrewiseerror2}
\norm{ \omega_t|_{X_y}- G_{0,y}^*(\omega_{SRF}|_{X_0} )}_{C^{k,\alpha}_{-4}(X_y\cap \{ r\gtrsim t^{1/10}+ t^{1/12} \rho'^{1/6}   \}  )  } \leq C(k, \alpha, \epsilon, \tau)
\max{ ( |y|, t^{3/5-\epsilon } )  }
\end{equation}
where $\epsilon>0$ can be made arbitrarily small. 
\end{lem}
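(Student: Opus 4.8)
The plan is to treat the two regimes separately, reducing each to estimates already established for the fibrewise approximate metrics in Section \ref{CYmetricsonsmoothingsofthenodalK3fibre}. For both inequalities the underlying strategy is the same: identify the fibrewise restriction $\omega_t|_{X_y}$ with one of the approximate metrics $\omega_y''$ or $\omega_y'$ (grafted appropriately), and then invoke the potential estimates from Proposition \ref{CYmetriconXy} and Lemma \ref{Lipschitztypeestimatepotentialfunction} to control the discrepancy. A central preliminary point is that $\sqrt{-1}\partial\bar\partial$ of a potential pulled back from the base $Y$ restricts trivially to the fibres, so the terms $\frac{1}{t}\tilde\omega_Y$ and any purely horizontal contributions drop out when we restrict to $X_y$; thus $\omega_t|_{X_y}$ is governed entirely by the fibrewise $\partial\bar\partial$ of the bracketed potential in \eqref{omegat}.

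For the first estimate \eqref{omegatfibrewiseerror1}, in the range $|y|\geq t^{\frac{6}{14+\tau}}$ only the $\chi_i G_{y_i}^*\psi_{y_i}$ terms are active (and $\chi_0$ has dropped out). First I would observe that $\omega_t|_{X_y}$ is a convex-type combination, via the partition $\{\chi_i\}$, of the grafted CY fibre metrics $\omega_{y_i}''=\omega_X|_{X_y}+\sqrt{-1}\partial\bar\partial G_{y_i,y}^*\psi_{y_i}$, each of which differs from $\omega_{SRF}|_{X_y}$ by the quantity controlled in Lemma \ref{Lipschitztypeestimatepotentialfunction}. Since each $\chi_i$ is supported in a base-disc of radius $\sim t^{1/2}$ centred at $y_i$, on the fibre $X_y$ we have $|y-y_i|\lesssim t^{1/2}$, and Lemma \ref{Lipschitztypeestimatepotentialfunction} yields
\[
\norm{G_{y_i,y}^*\psi_{y_i}-\psi_y}_{C^{k+2,\alpha}_\tau(X_y)}\leq C\,|y-y_i|\,|y_i|^{-\frac14(\tau+2)}\leq C\,t^{1/2}|y|^{-\frac14(\tau+2)},
\]
using $|y_i|\sim|y|$ on the support. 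Applying $\sqrt{-1}\partial\bar\partial$ shifts the weight from $\tau$ to $\tau-2$, and the bounded overlap of the $\chi_i$ together with their uniform $C^k$ bounds in the $\frac1t\omega_Y$ metric means the summation and the cutoff derivatives contribute only a $t$-independent constant; this gives \eqref{omegatfibrewiseerror1}. I would also note that the Lipschitz function $A_y$ and the identity $(\omega_{SRF}|_{X_y})^2=A_y^{-1}\Omega_y\wedge\overline\Omega_y$ make the limiting fibre metric unambiguous.

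For the second estimate \eqref{omegatfibrewiseerror2}, in the range $|y|<t^{\frac{6}{14+\tau}}$ with $r\gtrsim t^{1/10}+t^{1/12}\rho'^{1/6}$, we are on the support of $\gamma_1$ where the potential is $c_0+G_0^*(\psi_0-c_0)$, so $\omega_t|_{X_y}=\omega_X|_{X_y}+\sqrt{-1}\partial\bar\partial G_{0,y}^*\psi_0$. This is exactly the approximate metric analysed in the region $\{r>2|y|^{1/6}\}$ of Lemma \ref{CYmetriconXyintermediatelemma}, and the proof there shows
\[
\norm{\omega_X|_{X_y}+\sqrt{-1}\partial\bar\partial G_{0,y}^*\psi_0-G_{0,y}^*(\omega_{SRF}|_{X_0})}_{C^k_{-4}(X_y\cap\{r>2|y|^{1/6}\})}\leq C|y|,
\]
so the bulk of the work is already done once I verify that the new cutoff scale $r\gtrsim t^{1/10}+t^{1/12}\rho'^{1/6}$ is compatible with the hypothesis $r>2|y|^{1/6}$. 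As spelled out in Remark \ref{omegatmeaning}, for $|y|\gtrsim t^{3/5}$ this cutoff scale is comparable to $|y|^{1/6}$, so the two estimates align directly and the error is $O(|y|)$; for $|y|<t^{3/5}$ the cutoff scale is instead $\sim t^{1/10}$, forcing $r\gtrsim t^{1/10}$, and rerunning the dimensional-analysis estimate of Lemma \ref{CYmetriconXyintermediatelemma} with this larger lower bound on $r$ replaces the dominant error $\frac{C|y|}{r^4}$-type terms by expressions of size $O(t^{3/5})$ up to an arbitrarily small loss $t^{-\epsilon}$. Combining the two ranges produces the $\max(|y|,t^{3/5-\epsilon})$ bound.

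The main obstacle I anticipate is the second regime when $|y|<t^{3/5}$, where the regularisation cutoff scale $t^{1/10}$ detaches from the natural fibrewise gluing scale $|y|^{1/6}$. In that window the grafting diffeomorphism $G_{0,y}$ and the potential comparison \eqref{CYmetriconXyintermediate1} must be pushed down to radii $r\sim t^{1/10}$ that are \emph{larger} than the geometrically optimal $|y|^{1/6}$, so the error terms are no longer dominated by the vanishing-cycle scale; one has to re-examine which of the competing error sources (variation of complex structure $O(|y|/r^4)$, the potential remainder $O(r^4)$, and the cutoff-derivative error) is dominant at $r\sim t^{1/10}$, and confirm that their worst case is the claimed $t^{3/5-\epsilon}$. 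The $\epsilon$-loss signals a borderline logarithmic or power-counting sharpness here, and making the weighted $C^{k,\alpha}_{-4}$ bound uniform in $y$ across the transition $|y|\sim t^{3/5}$ — rather than just pointwise — is the delicate part that will require the locality of the weighted Hölder norms and careful bookkeeping of the cutoff derivatives of $\gamma_1(\frac{r}{t^{1/10}+t^{1/12}\rho'^{1/6}})$.
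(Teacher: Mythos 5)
Your overall skeleton (split into the two regimes, use Lemma \ref{Lipschitztypeestimatepotentialfunction} far from the singular fibre, reduce to the analysis of Lemma \ref{CYmetriconXyintermediatelemma} near it) does match the paper's proof, but there are two genuine gaps in how the transition regions are handled, and these are exactly where the content of the lemma lies. For \eqref{omegatfibrewiseerror1} you assert that on $\{|y|\geq t^{\frac{6}{14+\tau}}\}$ ``only the $\chi_i G_{y_i}^*\psi_{y_i}$ terms are active (and $\chi_0$ has dropped out).'' This is false: $\chi_0$ equals $1$ on $\{|y|\leq t^{\frac{6}{14+\tau}}\}$ but its support extends to $\{|y|\leq 2t^{\frac{6}{14+\tau}}\}$, so on the annulus $t^{\frac{6}{14+\tau}}\leq |y|\leq 2t^{\frac{6}{14+\tau}}$ the potential in \eqref{omegat} still contains the $\chi_0$-term built from $G_0^*(\psi_0-c_0)$, $t^{1/3}\sqrt{R^4+\rho}$ and $t^{1/3}\phi_{\C^3}'$. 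Lemma \ref{Lipschitztypeestimatepotentialfunction} cannot control this piece; one must show (after replacing the cutoffs by $\gamma_i(r/|y|^{1/6})$ and dropping $\phi_{\C^3}'$ via \eqref{phiC3error}) that this glued potential deviates from $\psi_y$ by $\psi_y'+c_0'(y)$ and invoke Proposition \ref{CYmetriconXy}, which gives $C|y|^{\frac{2}{3}-\frac{1}{6}\tau}$, and then observe that precisely at $|y|\sim t^{\frac{6}{14+\tau}}$ this equals $Ct^{1/2}|y|^{-\frac{1}{4}(\tau+2)}$ --- the scale-matching of Remark \ref{Scaleofdeformation}, which is the entire reason for the choice of exponent. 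Without this step the first estimate is unproven on the annulus where the two parts of the ansatz are spliced.

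For \eqref{omegatfibrewiseerror2} you set $\omega_t|_{X_y}=\omega_X|_{X_y}+\sqrt{-1}\partial\bar\partial G_{0,y}^*\psi_0$ throughout $\{r\gtrsim t^{1/10}+t^{1/12}\rho'^{1/6}\}$, but this identity only holds where $\gamma_1=1$, i.e. $r\geq 2(t^{1/10}+t^{1/12}\rho'^{1/6})$; on the cutoff annulus the term $\gamma_2\, t^{1/3}(\phi_{\C^3}'+\sqrt{R^4+\rho})$ contributes. Your inventory of error sources (complex-structure variation, the remainder $O(r^4)$, cutoff derivatives) omits exactly $t^{1/3}\phi_{\C^3}'$, which by \eqref{phiC3error} is only $O(t^{1/3}\rho^{-1+\epsilon})$ and is the true origin of the $\epsilon$-loss you found mysterious: at $r\sim t^{1/10}$, $\rho\sim t^{-1/15}$ it contributes $O(t^{2/5-\epsilon})$ to the potential, hence $O(t^{3/5-\epsilon})$ in the $C^{k,\alpha}_{-4}$ norm, and near $|y|\sim t^{3/5}$ it exceeds the $O(|y|)$ bound you claim for the subcase $|y|\gtrsim t^{3/5}$ by a relative factor $t^{-\epsilon}$. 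Your final bound $\max(|y|,t^{3/5-\epsilon})$ happens to be large enough to absorb this, but as written the subcase claim $O(|y|)$ is false near the transition and the subcase $|y|<t^{3/5}$ bound is asserted rather than derived; identifying and estimating the $\phi_{\C^3}'$ contribution is the missing ingredient needed to make the cutoff-region analysis, and hence \eqref{omegatfibrewiseerror2}, correct.
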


\begin{proof}
When $|y|\geq 2 t^{\frac{6}{14+\tau }  }$, since the support of $\chi_i$ has $\omega_Y$-length scale $\sim t^{1/2}$, we use Lemma \ref{Lipschitztypeestimatepotentialfunction} and the ensuing Remark to see that $\norm{\psi_y- G_{y_i, y}^*\psi_{y_i} }_{C^{k+2, \alpha}_{\tau}(X_y)    } \leq Ct^{1/2}|y|^{-\frac{1}{4}(\tau+2)  } $ whenever $\chi_i\neq 0$ at $y$.
Since at any $y$ the number of non-vanishing $\chi_i$ is bounded independent of $t$, these errors cannot accumulate, so adding up $\chi_i(  \psi_y- G_{y_i, y}^*\psi_{y_i}     )$ and applying $\sqrt{-1}\partial \bar{\partial}$ in the fibre direction, we see (\ref{omegatfibrewiseerror1}).

When $|y|\sim  t^{\frac{6}{14+\tau }      }$, we can make a few simplifications to (\ref{omegat}) with negligible effects. The cutoff function $\gamma_1( \frac{r}{t^{1/10}+ t^{1/12} \rho'^{1/6}   }    )$ is practically replaceable by $\gamma_1(\frac{r}{|y|^{1/6}}    )$, and likewise with $\gamma_2$.
We can also replace $\sqrt{R^4+\rho}$ with $\sqrt{R^4+ |\tilde{y}|}$, and use the estimate (\ref{phiC3error}) to drop the $\phi_{\C^3}'$ term in (\ref{omegat}). Then the potential term in (\ref{omegat}) proportional to $\chi_0$ is reduced to
\[
c_0+ \gamma_1(\frac{r}{ |y|^{1/6} } )G_{0,y}^*(\psi_0-c_0)+ \gamma_2(\frac{r}{|y|^{1/6}}  )\sqrt{r^4+|y|},
\]
which by Proposition \ref{CYmetriconXy} and its ensuing Remark, deviates from $\psi_y$ by $\psi_y'+c_0'(y)$, with estimate
\[
\norm{\psi_y'+c_0'(y)}_{C^{k,\alpha}_\tau(X_y)  }\leq C|y|^{-\frac{1}{6}\tau+ \frac{2}{3}  }\sim Ct^{1/2}|y|^{-\frac{1}{4}(\tau+2)  }.
\]
This contribution is comparable in strength to $\norm{\psi_y-G_{y_i,y}^*\psi_{y_i} }_{ C^{k+2, \alpha}_\tau(X_y)  }$, so we have (\ref{omegatfibrewiseerror1})
as in the previous case.

When $|y|< t^{\frac{6}{14+\tau}   }$, but $r> 2(t^{1/10}+ t^{1/12} \rho'^{1/6}   )$, we have $\omega_t=\omega_X+ \frac{1}{t}\tilde{\omega}_Y+\sqrt{-1}\partial \bar{\partial} G_0^*\psi_0$. Restricted to the fibres, this situation is identical with what we saw in Lemma 
\ref{CYmetriconXyintermediatelemma}, and
\[
\norm{ \omega_t|_{X_y}- G_{0,y}^*(\omega_{SRF}|_{X_0} )}_{C^{k,\alpha}_{-4}(X_y\cap \{ r>2( t^{1/10}+ t^{1/12}\rho'^{1/6}    )   \}  )  } \leq C|y|.
\]

When $|y|< t^{\frac{6}{14+\tau}    }$, and $r\sim t^{1/10}+ t^{1/12} \rho'^{1/6}$, we have contributions from the cutoff region. As mentioned in Remark \ref{omegatmeaning} there are two subcases. When $|y|\gtrsim t^{3/5}$, the cutoff function $\gamma_1(\frac{r}{t^{1/10}+ t^{1/12} \rho'^{1/6}  }  )$ can be practically replaced by $\gamma_1( \frac{r}{|y|^{1/6} } )$, and likewise with $\gamma_2$. We are in a situation similar to Lemma \ref{CYmetriconXyintermediatelemma}, and the main correction term is $t^{1/3}\phi_{\C^3}'$, which by (\ref{phiC3error}) is of order $O(t^{1/3}\rho^{-1+\epsilon})$. (In fact there is another error term caused by the deviation of $t^{1/3}\sqrt{R^4+\rho}$ from $t^{1/3}\sqrt{R^4+|\tilde{y}|}$, which has to do with regularisation. This error is of order $O( t^{1/3}\rho^{-1})$, which is a little less significant than $t^{1/3}\phi_{\C^3}'$.) The correction effect of $t^{1/3}\phi_{\C^3}'$ to the metric is of order $O( t^{1/3} \rho^{-1+\epsilon} r^{-2} )$. The relative strength of this new error source compared to the error already present in the previous case, is or order
\[
O(\frac{t^{1/3}\rho^{-1+\epsilon}r^{-2}}{ |y|r^{-4}  })=O( \frac{t^{\frac{1}{2}- \frac{1}{6}\epsilon } r^2}{ \rho'^{1-\epsilon} |y| }    )=O( \frac{t^{\frac{1}{2}- \frac{1}{6}\epsilon } }{ \rho'^{1-\epsilon} |y|^{2/3} }  )=O( \frac{t^{ 1-\frac{2}{3}\epsilon } }{|y|^{5/3-\epsilon}  }  ).
\]
When we come near $|y|\sim t^{3/5}$, this new error source $t^{1/3}\phi_{\C^3}'$ overwhelms by a relative factor $O(t^{- \frac{1}{15}\epsilon})=O(t^{-\epsilon})$, while for $|y|\gg t^{3/5}$, this new error is not significant. Thus in this new region
$\{ t^{3/5}\lesssim |y|< t^{\frac{6}{14+\tau} } , r\sim t^{1/10}+ t^{1/12}\rho'^{1/6} \}$,
the previous estimate is changed to
\[
\norm{ \omega_t|_{X_y}- G_{0,y}^*(\omega_{SRF}|_{X_0} )}_{C^{k,\alpha}_{-4}(X_y\cap \{ r\sim t^{1/12}\rho'^{1/6}   \}  )  } \leq C \max{ (|y|, t^{3/5-\epsilon }  )   }.
\]

On the other hand, if $|y|< t^{3/5}$, then $\gamma_1(\frac{r}{t^{1/10}+ t^{1/12} \rho'^{1/6}  }  )$ can be practically replaced by $\gamma_1( \frac{r}{t^{1/10}})$, and likewise with $\gamma_2$. The main errors are caused by the variation of complex structures, the deviation of the nodal K3 metric from the flat orbifold metric on $\C^2/\Z_2$, and the presence of $t^{1/3} \phi_{\C^3}'$. Since we are working at the scale $r\sim t^{1/10}$, the various sources of error for the potential are of order
\[
O(\frac{|y|}{r^2} )=O(t^{2/5}), \quad O(r^4)=O(t^{2/5}), \quad O(t^{1/3}\rho^{-1+\epsilon})=O(t^{ \frac{2}{5}-\epsilon } ).
\]
The error for the metric comes at order $O(\frac{t^{\frac{2}{5}-\epsilon  }  }{r^2} )=O(t^{1/5-\epsilon} )$.
The higher order derivative estimate involves no extra difficulty. From this we see that when $|y|<t^{3/5}$ and $r\sim t^{1/10}$, 
\[
\norm{ \omega_t|_{X_y}- G_{0,y}^*(\omega_{SRF}|_{X_0} )}_{C^{k,\alpha}_{-4}(X_y\cap \{ r\sim t^{1/10}   \}  )  } \leq C t^{ \frac{3}{5}-\epsilon   }.
\]
A more uniform way to present these estimate is that for $|y|< t^{\frac{6}{14+\tau} }$, and $r\gtrsim t^{1/10}+t^{1/12}\rho'^{1/6}$, there is the estimate (\ref{omegatfibrewiseerror2}) where the exponent $\epsilon>0$ can be made arbitrarily small.
\end{proof}

Staying still in this region, we wish to estimate how much the volume form of $\omega_t$ fails to be Calabi-Yau. The defining condition of the  Calabi-Yau metric $\tilde{\omega}_t$ is
\[
\tilde{\omega}_t^3= a_t \sqrt{-1} \Omega\wedge \overline{\Omega}, \quad a_t= \int_X (\frac{1}{t} [\omega_Y]+[\omega_X])^3= \frac{3}{t} + \int_X \omega_X^3, 
\] 
where we used the normalisation $\int \sqrt{-1}\Omega\wedge \overline{\Omega}=1$, $\int_Y [\omega_Y]=1$, $\int_{X_y} \omega_X^2=1$.
Writing 
\begin{equation}
\omega_t^3= a_t (1+f_t) \sqrt{-1} \Omega\wedge \overline{\Omega},
\end{equation}
the task is to estimate the error $f_t$ in the weighted H\"older norm introduced in Section \ref{WeightedHolderspacesonX}.

\begin{lem}\label{omegaterror1}
Let	$-2<\tau<0$ and $\delta>\frac{3}{4}  \tau -\frac{1}{2}$, $\delta< \frac{2}{3}+ \frac{5\tau}{6}$, then in the region $\{ |y|\gtrsim t^{\frac{6}{14+\tau}  }  \}$ and the region $\{ |y|<  t^{\frac{6}{14+\tau}  },  r\gtrsim t^{1/10}+ t^{1/12} \rho'^{1/6} \}$, 
we have the volume error estimate
\begin{equation*}
\norm{f_t} _{ C^{0,\alpha}_{\delta-2, \tau-2, t } }
\leq 
C(\alpha, \delta, \tau)   t^{ \delta' }
,
\end{equation*}
where we denote $\delta'=-\frac{1}{2}\tau +\frac{1}{2}\delta   + \frac{6}{14+\tau}   (\frac{2}{3} + \frac{5\tau}{6}-\delta )$.
\end{lem}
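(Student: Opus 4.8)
The plan is to compute $\omega_t^3$ through the fibration structure, isolating a single leading term that matches $a_t\sqrt{-1}\Omega\wedge\overline{\Omega}$ exactly and treating everything else as error. Since the base $Y$ is a curve, the horizontal part of $\omega_t$ is a multiple of $\sqrt{-1}\,dy\wedge d\bar{y}$ and squares to zero, so only the combination of two fibre factors with one horizontal factor survives at leading order:
\[
\omega_t^3 = 3\,(\omega_t|_{X_y})^2\wedge\tfrac{1}{t}\tilde{\omega}_Y + (\text{mixed and lower order terms}),
\]
where the corrections come from the fibre--base cross terms generated by the $y$-dependence of the grafted potentials and of the trivialisations $G_{y_i},G_0$, together with the horizontal part of $\omega_X$. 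First I would record the exact matching: the fibrewise Calabi--Yau equation $(\omega_{SRF}|_{X_y})^2 = A_y^{-1}\Omega_y\wedge\overline{\Omega}_y$, together with $\tilde{\omega}_Y = A_y\sqrt{-1}\,dy\wedge d\bar{y}$ and $\sqrt{-1}\Omega\wedge\overline{\Omega} = \Omega_y\wedge\overline{\Omega}_y\wedge\sqrt{-1}\,dy\wedge d\bar{y}$, gives $3(\omega_{SRF}|_{X_y})^2\wedge\tfrac1t\tilde{\omega}_Y = \tfrac3t\sqrt{-1}\Omega\wedge\overline{\Omega}$. As $a_t = \tfrac3t+\int_X\omega_X^3 = \tfrac3t(1+O(t))$, this is the desired leading term up to a harmless relative $O(t)$.

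The dominant genuine error is the fibrewise volume deviation. Writing $\delta\omega$ for $(\omega_t-\omega_{SRF})|_{X_y}$ in the outer range and for the deviation from $G_{0,y}^*(\omega_{SRF}|_{X_0})$ in the inner range, one has
\[
\frac{(\omega_t|_{X_y})^2}{(\omega_{SRF}|_{X_y})^2} - 1 = \operatorname{tr}_{\omega_{SRF}|_{X_y}}\delta\omega + O(|\delta\omega|_{EH_y}^2),
\]
so that, modulo the horizontal pieces above, $|f_t|$ is controlled to leading order by $|\delta\omega|_{EH_y}$. I would then feed in the estimates of the preceding lemma: in $\{|y|\gtrsim t^{6/(14+\tau)}\}$ the bound $\norm{\delta\omega}_{C^{k,\alpha}_{\tau-2}(X_y)}\le Ct^{1/2}|y|^{-\frac14(\tau+2)}$ from (\ref{omegatfibrewiseerror1}) records that $|f_t|\lesssim r^{\tau-2}\,t^{1/2}|y|^{-\frac14(\tau+2)}$ in the fibre-normalised sense, while in $\{|y|< t^{6/(14+\tau)}\}$ one uses (\ref{omegatfibrewiseerror2}) together with the comparison of $G_{0,y}^*(\omega_{SRF}|_{X_0})^2$ with $A_y^{-1}\Omega_y\wedge\overline{\Omega}_y$ already carried out in Lemma \ref{CYmetriconXyintermediatelemma}. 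The quadratic remainder and the mixed/horizontal terms — including the $O(t^{1/3}\rho^{-1+\epsilon})$ contribution of $\phi_{\C^3}'$ near the gluing scale, isolated in the proof of the preceding lemma — are strictly smaller, since the relevant $L^\infty$ size of $\delta\omega$ is $\ll1$, and so do not affect the final exponent.

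It remains to convert these fibrewise bounds into the global norm $\norm{\cdot}_{C^{0,\alpha}_{\delta-2,\tau-2,t}}$ of Section \ref{WeightedHolderspacesonX}. On $U_2\simeq G_0(U_2)\subset X_0\times\C$, using $\rho'\sim|\zeta|\sim t^{-1/2}|y|$ and $w'\sim r/\rho'$ in the relevant range, applying the weight $\rho'^{-(\delta-2)}w'^{-(\tau-2)}$ to $|f_t|\lesssim r^{\tau-2}t^{1/2}|y|^{-\frac14(\tau+2)}$ cancels the $r$-power exactly and leaves $t^{\frac12(\delta-\tau)+\frac12}\,|y|^{-\delta+\frac34\tau-\frac12}$. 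The constraint $\delta>\tfrac34\tau-\tfrac12$ makes the $|y|$-exponent negative, so the supremum is attained at the inner edge $|y|\sim t^{6/(14+\tau)}$ — precisely the balance scale singled out in Remark \ref{Scaleofdeformation} — and substituting this value collapses the exponent to $\delta'$. I would then check the other two pieces of (\ref{weightedHoldernorm}): on $U_3$ the prefactor $t^{\frac12(\delta-\tau)}$ times a fibrewise error $\lesssim t^{1/2}$ stays below $t^{\delta'}$ because the two constraints combine to give $\frac{6}{14+\tau}(\tfrac23+\tfrac56\tau-\delta)<\tfrac12$, and on $U_1\cap U_2$ the norm equivalences $\rho'\sim t^{1/6}\rho$, $w'\sim w$ reproduce the same bound; the inner range is handled analogously and is kept subdominant by $\delta<\tfrac23+\tfrac56\tau$.

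The main obstacle is the simultaneous bookkeeping of the competing scales: reconciling the fibre Eguchi--Hanson weight $\tau-2$ coming from (\ref{omegatfibrewiseerror1}) with the global double weight $(\delta-2,\tau-2)$, tracking the $t$-powers built into (\ref{weightedHoldernorm}) across $U_1,U_2,U_3$, and verifying that the maximum over $y$ genuinely sits at $|y|\sim t^{6/(14+\tau)}$ rather than at an endpoint — it is exactly this optimisation, together with the cancellation of the $r$-power, that produces the stated $\delta'$. A secondary technical point is the H\"older seminorm: since $\tilde{\omega}_Y$ and the trivialisations are only Lipschitz in $y$, one must check that the oscillation of $f_t$ over balls of fibre scale $\sim r$ — small compared with the base variation — absorbs this limited regularity, which it does because the $y$-variation across such a ball is of lower order.
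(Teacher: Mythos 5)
Your proposal is correct and follows essentially the same route as the paper's proof: isolate the leading term $\tfrac{3}{t}\,(\omega_{SRF}|_{X_y})^2\wedge\tilde{\omega}_Y = \tfrac{3}{t}\sqrt{-1}\,\Omega\wedge\overline{\Omega}$, treat the fibrewise deviation (fed in from (\ref{omegatfibrewiseerror1})--(\ref{omegatfibrewiseerror2})) as the dominant error and the horizontal/mixed fluctuation as subdominant, then convert to the double-weighted norm and optimise over $|y|$, with $\delta>\tfrac34\tau-\tfrac12$ and $\delta<\tfrac23+\tfrac56\tau$ pinning the supremum at the balance scale $|y|\sim t^{6/(14+\tau)}$ to produce exactly $t^{\delta'}$. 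Your weight cancellation and endpoint analysis match the paper's computation, including the observation that $\tilde{\omega}_Y$'s mere Lipschitz regularity caps the attainable derivative estimates.
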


\begin{proof}
The dominant term of $\omega_t^3$ is $\frac{3}{t}\omega_{SRF}|_{X_y}^2 \tilde{\omega}_Y= \frac{3}{t}\sqrt{-1}\Omega\wedge \overline{\Omega}$. The deviation comes from two sources: the fibrewise deviation of $\omega_t^2|_{X_y}$ from $\omega_{SRF}^2|_{X_y}$, and also $(\omega_t- \frac{1}{t} \tilde{\omega}_Y)^3$, which involves understanding the horizontal component of $\omega_t-	\frac{1}{t} \tilde{\omega}_Y$ and can be thought as fluctuation of the generalised KE metric $\tilde{\omega}_Y$.

Consider first the region with $|y|\geq 2t^{ \frac{6}{14+\tau}  }$. Fibrewise deviation from Calabi-Yau metric causes an error $f_t'= \frac{\omega_t|_{X_y}^2 \wedge \tilde{\omega}_Y }{ \sqrt{-1}\Omega\wedge \overline{\Omega}  }-1= \frac{\omega_t|_{X_y}^2  }{ \omega_{SRF}|_{X_y}^2  }-1 $, whose pointwise magnitude is controlled by
\[
\begin{split}
|f_t'|& \leq C\norm{ \omega_t|_{X_y}- \omega_{SRF}|_{X_y}}_{ C^{0}_{\tau-2}(X_y)  } r^{\tau-2} 
\leq Ct^{1/2}|y|^{-\frac{1}{4}(\tau+2)  }r^{\tau-2} \\
&\leq Ct^{1/2}(t^{1/2} \rho' )^{-\frac{1}{4}(\tau+2)   }   
\rho'^{  \tau-\delta }
r^{\tau-2}\rho'^{\delta-\tau} \\
&\leq 
C t^{  \frac{6}{14+\tau} (\frac{3}{4}\tau-\delta-\frac{1}{2}) } t^{\frac{1}{2}(1+\delta-\tau) }   r^{\tau-2}\rho'^{\delta-\tau} \\
&\leq C t^{  \frac{6}{14+\tau} (\frac{3}{4}\tau-\delta-\frac{1}{2})+\frac{1}{2}(1+\delta-\tau) }   w'^{\tau-2}\rho'^{\delta-2} = C t^{\delta'} w'^{\tau-2}\rho'^{\delta-2} .
\end{split}
\]
 where we used (\ref{omegatfibrewiseerror1}) and $\delta>\frac{3}{4}  \tau -\frac{1}{2} $.
This is the first step towards   estimating $f_t'$ in the 
 $C^{0,\alpha}_{\delta-2, \tau-2, t}(X \cap \{ |y|> 2t^{ \frac{6}{14+\tau} }  \} )$ norm 
in this region. Estimating the vertical derviatives of $f_t'$ poses no further difficulty.

We now make some general comments about horizontal differentiation. Near a given fibre $X_{y'}$, there is a trivialisation around a small normal neighbourhood, for example induced by the diffeomorphism $G_{y_i}$ where $|y_i-y'|\leq \epsilon_2 |y'|$. This will induce some horizontal distribution, which allows us to lift the vector fields on the base $Y$ to $X$. In the coordinate neighourhood $U_1$ with coordinates $\mathfrak{z}_1, \mathfrak{z}_2, \mathfrak{z}_3$, a particular lift of $\frac{\partial}{\partial y}$ is given by $\sum\frac{ \bar{\mathfrak{z}}_i }{ 2|\mathfrak{z}|^2 }  \frac{\partial}{\partial \mathfrak{z}_i}$, which is orthogonal to the fibres with respect to the standard Euclidean metric in these coordinates. Now if the trivialisation is chosen well, its induced horizontal lift of $\frac{\partial}{\partial y}$ will differ from $\sum\frac{ \bar{\mathfrak{z}}_i }{ 2|\mathfrak{z}|^2 }  \frac{\partial}{\partial \mathfrak{z}_i}$    by some vertical vector field whose $\omega_X$-magnitude is $O(\frac{1}{|\mathfrak{z}|}  )=O(r^{-2})$, or equivalently its magnitude with respect to the Eguchi-Hanson metric $EH_y$ is $O(r^{-3})$. This measures the deviation between horizontal lifts for any two different good choices of trivialisations, such as  $G_{y_i}$ and $G_{y_j}$ where $|y_i-y_j|\leq \epsilon_2|y_i|$.

In particular, given a function $f\in C^{1,\alpha}_{\tau}(X_{y_i})$  on a very nearby fibre $X_{y_i}$, then $G_{y_i}^*f$ defines a function near $X_{y'}$. To estimate the magnitude of its gradient in the horizontal direction, we can fix a good auxiliary trivialisation around $X_{y'}$, equip the normal neighbourhood with an ambient metric comparable to the product metric $\omega_{SRF}|_{X_{y'}} + \frac{1}{t}\sqrt{-1} A_{y'} dy\wedge d\bar{y}$, find the horizontal lift $v$ of $\sqrt{t}\frac{\partial}{\partial y}$ under the good trivialisation, make $v$ act on $G_{y_i}^*f$, and then compute the maginitude of the derivative (\cf Remark \ref{weightedHoldernormcomputationtechnique}). (The normalisation on $v$ is to make sure it is roughly of unit length in our ambient metric.)  But $G_{y_i}$ also provides a good trivialisation, hence another lift $v'$ of $\sqrt{t}\frac{\partial}{\partial y}$, with $|v-v'|_{EH_y}=O(t^{1/2}r^{-3} )$. Tautologically $v'(G^*_{y_i}f)=0$, so 
\[ |v (G^*_{y_i}f  )|= |(v-v') (G^*_{y_i}f  )| \leq Ct^{1/2} r^{-3} |\nabla_{X_{y_i}} f |_{EH_{y_i}}\leq Ct^{1/2} r^{-3} r^{\tau-1} \norm{f }_{ C^{1,\alpha}_\tau(X_{y_i}) } . \] We may think of $v(G_{y_i}^*f )$ suggestively as the horizontal derivative of $G_{y_i}^*f$, and write it schematically as $\sqrt{t}\frac{G_{y_i}^*f }{\partial y }$. Similarly we make sense of $\sqrt{t}\frac{G_{y_i}^*f }{\partial \bar{y} } $.
Continuing in a similar fashion, if we differentiate $G_{y_i}^*f$ by $k$ times and measure it using the ambient metric, then as long as our choices of trivialisations are well behaved (meaning $v-v'$ have good higher order weighted H\"older estimates), we will get
\[
t^{k/2}|\frac{\partial^k G_{y_i}^*f }{\partial^j y \partial^{k-j} \bar{ y} }|\leq C t^{k/2}r^{-3k} r^{\tau-k} \norm{ f  }_{ C^{k,\alpha}_{\tau}(X_{y_i})  }.
\]
The main effect of horizontal differentiation along a unit vector, compared to vertical differentiation, is that it brings about an extra factor of $O(t^{1/2}r^{-3} )$ for each derivative. This principle also works for tensors. The underlying reason for this principle to work is an approximate homogeneity under  $\mathfrak{z}\to \lambda \mathfrak{z}$, which reduces the problem to the case where $r\sim 1, |y'|\ll1$.

As a special observation, as long as $r\gg t^{1/6}$, horizontal differentiation is suppressed by vertical differentiation.
Using these principles, we see in particular that
\[
\norm{f_t'}_{ C^{0,\alpha}_{\delta-2, \tau-2, t  }(X\cap \{ |y|> 2t^{\frac{6}{14+\tau }   }  \}  ) }\leq C  t^{  \delta' }.
\]
But the metric $\tilde{\omega}_Y$ is only Lipschitz, so the best improvement is the $C^1_{\delta-2, \tau-2, t }$ bound.

Staying in the region $\{|y|> 2 t^{\frac{6}{14+\tau}  }  \} $,  we also need to estimate the error $f_t''= \frac{ (\omega_t- t^{-1}\tilde{\omega}_Y )^3 } {3t^{-1}\sqrt{-1} \Omega\wedge \overline{\Omega}  }$. Since $\omega_t$ is approximately $\omega_{SRF}|_{X_y}$ on the fibre, the size of $(\omega_t- t^{-1}\tilde{\omega}_Y )^3$ depends on knowing the horizontal part of $\omega_t- t^{-1}\tilde{\omega}_Y$ (the horizontal-vertical mixed terms also play a role, whose contributions can be treated similarly). This in turn requires understanding
 the horizontal second derivative of $G_{y_i}^*\psi_{y_i}$, and the horizontal component of $\sqrt{-1} \partial \bar{\partial }\{ \chi_i (G_{y_i}^*\psi_{y_i}- G_{y_j}^*\psi_{y_j}    )\}$ when the support of $\chi_i$ and $\chi_j$ overlap.
The former is estimated by $ \frac{Ct}{r^6}  $ using
 Proposition \ref{CYmetricsonsmoothingsofthenodalK3fibre} and the above principles concerning horizontal differentiation. Notice in our region this error is  insignificant compared to $f_t'$:
\[
\frac{Ct}{r^6}\ll Ct^{1/2} |y|^{\frac{-1}{4}(\tau+2)    }r^{\tau-2}.
\]  
The new feature of the latter term $\sqrt{-1} \partial \bar{\partial }\{ \chi_i (G_{y_i}^*\psi_{y_i}- G_{y_j}^*\phi_{y_j}    )\}$ comes from differentiating $\chi_i$, which by Lemma \ref{Lipschitztypeestimatepotentialfunction} can be controlled. For instance, 
\[
| (G_{y_i}^*\psi_{y_i}- G_{y_j}^*\phi_{y_j})  \sqrt{-1}\partial \bar{\partial}\chi_i | \leq C|G_{y_i}^*\psi_{y_i}- G_{y_j}^*\phi_{y_j}| \leq C t^{1/2}|y|^{-\frac{1}{4}(\tau+2)   }  r^\tau,
\]
which is again dominated by $C t^{1/2}|y|^{-\frac{1}{4}(\tau+2)   }  r^{\tau-2}$; the same happens for all terms involving differentiating $\chi_i$. At each given point only a bounded number of $y_i$ contribute, so the errors do not accumulate, and the horizontal part of $\omega_t- \frac{1}{t}\tilde{\omega}_Y$ is dominated by $ C t^{1/2}|y|^{-\frac{1}{4}(\tau+2)   }  r^{\tau-2}$, whence the same holds for $|f_t''|$. All these indicate that $f_t''$ is less significant compared to $f_t'$. Proceeding further,
\[
\norm{f_t''}_{ C^{0,\alpha}_{\delta-2, \tau-2, t  }(X\cap \{ |y|> 2t^{\frac{6}{14+\tau }   }  \}  ) }\leq C  t^{  \delta' }.
\]
The total error $f_t$ can be expressed as
\[
f_t= -1+ \frac{ 3  }{t a_t  } (1+ f_t'+ f_t'').
\]
Here the normalising constant is $\frac{3}{ta_t}=1+ O(t)$, so combining the above discussions, in the region $\{ |y|> 2t^{ \frac{6}{14+\tau  } } \}$,
\[
\norm{f_t} _{ C^{0,\alpha}_{\delta-2, \tau-2, t  }(X\cap \{ |y|> 2t^{\frac{6}{14+\tau }   }  \}  ) }\leq C  t^{  \delta' }.
\] 
	
In the region $\{ |y|\sim t^{\frac{6}{14+\tau } }  \}$, there are new contributions from the terms in (\ref{omegat}) inside $\chi_0$. The arguments are very similar once we have (\ref{omegatfibrewiseerror1}). The main new features to observe is that $\phi_{\C^3}'$ is negligible using (\ref{phiC3error}), and that the cutoff functions $\gamma_1$ and $\gamma_2$ have $C^{k,\alpha}_{0,0,t}$ estimates, so multiplication by such cutoff functions only increases $C^{k,\alpha}_{\delta-2, \tau-2, t}$ norms by a bounded factor. The result is 
\[
\norm{f_t} _{ C^{0,\alpha}_{\delta-2, \tau-2, t  }(X\cap \{ |y|\sim t^{\frac{6}{14+\tau }   }  \}  ) }\leq C  t^{  \delta' }.
\]

Next, we focus on the region with $|y|< t^{ \frac{6}{14+\tau}  }$, but $r\gtrsim  ( t^{1/10}+ t^{1/12}\rho'^{1/6} )$. As before we start with the contribution $f_t'$ measuring the failure of fibrewise Calabi-Yau condition,  making use of (\ref{omegatfibrewiseerror2}) and the fact that the fibrewise holomorphic volume form $\Omega_y$ is close to $\Omega_0$. In the subcase of $|y|> t^{3/5}$, since $-2<\tau<0$, $\delta< \frac{2}{3}+ \frac{5\tau}{6} $, $0<\epsilon\ll1$ and $r\gtrsim |y|^{1/6}$,
\[
\begin{split}
|f_t'| &\leq C \max{ (t^{3/5-\epsilon},|y|)} r^{-4}
\leq C    (\max{ (t^{3/5-\epsilon},|y|)}  \rho'^{\tau-\delta} r^{-2-\tau}   ) \rho'^{\delta-2}w'^{\tau-2 } \\
&\leq Ct^{ -\frac{1}{2}   (\tau-\delta) } (\max{ (t^{3/5-\epsilon} ,|y|)}   |y|^{\tau-\delta} |y|^{\frac{1}{6}  (-2-\tau)}   ) \rho'^{\delta-2}w'^{\tau-2 } \\
& \leq Ct^{  -\frac{1}{2}\tau +\frac{1}{2}\delta    }  t^{  \frac{6}{14+\tau}   (2/3 + 5\tau/6-\delta ) }    \rho'^{\delta-2}w'^{\tau-2 } \\
& =  Ct^{  \delta' }    \rho'^{\delta-2}w'^{\tau-2 }  .
\end{split}
\]
On the other hand, when $r> 2(t^{1/10}+ t^{1/12}\rho'^{1/6}  )$, the fluctuation error $f_t''$ is 
\[
|f_t''| \leq C\frac{t}{r^6} \ll |y| r^{-4},
\] 
so is insignificant compared to $f_t'$. At the cutoff scale, there is an extra term coming from $t^{1/3}\phi_{\C^3}'$, which gives a contribution to $f_t''$ of order $O( t^{1/3}\rho^{-1+\epsilon} r^{-2}  )$, which is again dominated by $C \max{ (t^{3/5-\epsilon} ,|y|)} r^{-4}$.
Proceeding further, 
\[
\begin{split}
\norm{f_t} _{ C^{0,\alpha}_{\delta-2, \tau-2, t  }( \{ t^{3/5}\lesssim  |y|< t^{\frac{6}{14+\tau }   }, r\gtrsim  t^{1/10}+ t^{1/12}\rho'^{1/6}   \}  ) }
\leq Ct^{  \delta' }.
\end{split}
\]


In the region  $ \{|y|<t^{3/5  }, r \gtrsim t^{1/10}  \}$,   the error due to failure of fibrewise Calabi-Yau condition can be estimated by (\ref{omegatfibrewiseerror2})
\[
|f_t'| \leq C t^{3/5-\epsilon} r^{-4}\leq C t^{ 2/5-\delta/10-\epsilon  } \rho'^{\delta-2}w'^{\tau-2},
\]
and the fluctuation error is
\[
|f_t''|\leq
\begin{cases}
 C \frac{t}{r^6} \ll Ct^{3/5}r^{-4}, & r> 2(t^{1/10}+ t^{1/12}\rho'^{1/6}) \\
 Ct^{1/3} \rho^{-1+\epsilon}r^{-2}\leq    Ct^{3/5-\epsilon}r^{-4}, & r\sim t^{1/10},
\end{cases}
\]
so $|f_t'|$ is the dominant error.  Proceeding as usual,
\[
\norm{f_t} _{ C^{0,\alpha}_{\delta-2, \tau-2, t  }(\{ |y|< t^{3/5   }, r\gtrsim t^{1/10} \}  ) }
\leq Ct^{ 2/5-\delta/10-\epsilon }\ll
t^{  \delta' }
.
\]
Combining all the discussions above gives the claim.
\end{proof}

We now turn our attention to the region  $\{ |y|<t^{\frac{6}{14+\tau}  }, r< t^{1/10}+ t^{1/12} \rho'^{1/6} \}$, contained in $U_1$. Recall from Remark \ref{omegatmeaning} that the metric $\omega_t$ is predominantly $(\frac{t}{2A_0})^{1/3} \omega_{\C^3}$.
 
\begin{lem}\label{omegaterror2}
Let $-2<\tau<0$ and $\delta< \frac{2}{3}+ \frac{5\tau}{6} $, then in the region $\{ |y|<t^{\frac{6}{14+\tau}  }, r< t^{1/10}+ t^{1/12} \rho'^{1/6}  \}$, we have the estimate for the volume form error
\begin{equation*}
\norm{f_t}_{  C^{0,\alpha}_{\delta-2, \tau- 2,t}  } \leq C(\alpha, \delta, \tau) t^{ \delta' },
\end{equation*}
where we recall $\delta'=\frac{6}{14+\tau}(   \frac{2}{3}+ \frac{5\tau}{6}-\delta) + \frac{1}{2}\delta-\frac{1}{2}\tau $.
\end{lem}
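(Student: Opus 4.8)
The plan is to exploit that on this innermost region the ansatz is an \emph{exact} perturbation of the scaled model metric, so that the estimate reduces to the known Calabi--Yau property of $\omega_{\C^3}$. First I record the algebraic identity behind the fourth bullet of Remark \ref{omegatmeaning}. On $\{|y|<t^{6/(14+\tau)},\,r<t^{1/10}+t^{1/12}\rho'^{1/6}\}$ one has $\chi_0\equiv1$, $\gamma_1\equiv0$, $\gamma_2\equiv1$, so the potential reduces to $c_0+(\frac{t}{2A_0})^{1/3}(\phi_{\C^3}'+\sqrt{R^4+\rho})$; writing $A_y=A_0+(A_y-A_0)$ in $\frac1t\tilde\omega_Y=\frac1tA_y\sqrt{-1}dy\wedge d\bar{y}$ and using $\frac1tA_0\sqrt{-1}dy\wedge d\bar{y}=(\frac{t}{2A_0})^{1/3}\sqrt{-1}\partial\bar\partial\frac12|\tilde y|^2$, the model potential assembles to $\phi_\infty+\phi_{\C^3}'=\phi_{\C^3}$, leaving the clean splitting
\[
\omega_t=\Big(\tfrac{t}{2A_0}\Big)^{1/3}\omega_{\C^3}+\omega_X+\tfrac1t(A_y-A_0)\sqrt{-1}\,dy\wedge d\bar{y}.
\]
Thus the only genuine error 2-forms are the background $\omega_X$ and the fluctuation $\frac1t(A_y-A_0)\sqrt{-1}dy\wedge d\bar{y}$ of the generalised K\"ahler--Einstein metric about its central value.

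Next I would compute $f_t$ by expanding the cube. Because $\omega_{\C^3}^3=\frac32\prod_i\sqrt{-1}dz_i\wedge d\bar z_i$ solves the model Calabi--Yau equation exactly, and the normalisation of Section \ref{CYmetricsonsmoothingsofthenodalK3fibre} gives $\sqrt{-1}\Omega\wedge\overline\Omega=A_0(1+O(\mathfrak z))\prod_i\sqrt{-1}d\mathfrak z_i\wedge d\bar{\mathfrak z}_i$, the scaling $\mathfrak z_i=(\frac{t}{2A_0})^{1/3}z_i$ turns the leading term $(\frac{t}{2A_0})\omega_{\C^3}^3$ into $\frac3t(1+O(\mathfrak z))^{-1}\sqrt{-1}\Omega\wedge\overline\Omega$. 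Since $\frac{3}{ta_t}=1+O(t)$, this confirms that $f_t$ vanishes to leading order, and that its actual value is a sum of four contributions: the holomorphic volume form deviation $O(|\mathfrak z|)=O(r^2)$; the normalisation error $O(t)$; the term $(\frac{t}{2A_0})^{-1/3}\operatorname{tr}_{\omega_{\C^3}}\omega_X$; and, after the powers of $t$ cancel, a term comparable to $|A_y-A_0|\cdot\operatorname{tr}_{\omega_{\C^3}}(\sqrt{-1}d\tilde y\wedge d\bar{\tilde y})$. The purely quadratic and cubic terms in the two error forms are of strictly higher order.

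I would then estimate each contribution in the weighted norm, converting to $\C^3$ via $F_t$ and using $r\sim t^{1/6}R$, $\rho'\sim t^{1/6}\rho$, $w'\sim w$ on $U_1$, the full weighted decay \eqref{phiC3error} of $\phi_{\C^3}'$, and the Lipschitz bound $|A_y-A_0|\le C|y|<Ct^{6/(14+\tau)}$ from the lemma of Section \ref{ThegeneralisedKahlerEinsteinmetric}. The point is that here the semi-Ricci-flat approximation has been replaced by the genuine Calabi--Yau model, so every error is comfortably controlled; the content of the lemma is only that none exceeds the uniform bound $t^{\delta'}$ carried over from Lemma \ref{omegaterror1}, being strictly smaller in the interior and matching it only along the outer boundary, so that the two estimates combine into a single global bound. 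Tracking the $\rho,w$ weights, the worst case occurs at $r\sim t^{1/10}+t^{1/12}\rho'^{1/6}$, where $\rho$ reaches its maximal size ($\sim t^{-1/15}$ in the bulk, and along the neck $R\sim t^{-1/18}\rho^{1/6}$ with $w\sim t^{-1/18}\rho^{-5/6}<1$); there the contributions meet continuously the estimate of Lemma \ref{omegaterror1}. The constraint $\delta<\frac23+\frac{5\tau}6$ is exactly what keeps the (growing in $\rho$) weighted norms of the $A_y$-fluctuation and the $O(r^2)$ term under $t^{\delta'}$ throughout. The $C^{0,\alpha}$ H\"older seminorms follow by the same scaling together with the bounded geometry of $\omega_{\C^3}$ on the balls $B(z,cR)$; since $A_y$ is merely Lipschitz in $y$ one obtains $C^{0,\alpha}$ control but no more, consistent with the regularity caveat recorded after the lemma of Section \ref{ThegeneralisedKahlerEinsteinmetric}.

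The step I expect to be the main obstacle is the weight bookkeeping for the $A_y$-fluctuation and $O(r^2)$ terms near the vanishing cycles: one must determine the $\rho,w$-behaviour of $\operatorname{tr}_{\omega_{\C^3}}(\sqrt{-1}d\tilde y\wedge d\bar{\tilde y})$, which requires the horizontal geometry of $\omega_{\C^3}$ in the neck region where $w<1$, and then verify that after inserting the maximal admissible $\rho$ the resulting power of $t$ is never below $\delta'$. Everything else is a routine, if lengthy, tracking of scaling exponents, entirely parallel to the error analysis in Lemma \ref{omegaterror1}.
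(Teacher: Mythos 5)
Your proposal is correct and takes essentially the same approach as the paper's proof: the exact splitting $\omega_t=(\tfrac{t}{2A_0})^{1/3}\omega_{\C^3}+\omega_X+\tfrac1t(A_y-A_0)\sqrt{-1}\,dy\wedge d\bar y$, weighted estimates of these two error forms together with the $O(r^2)$ holomorphic-volume-form and $O(t)$ normalisation errors measured against $\omega_{\C^3}$, the constraint $\delta<\tfrac23+\tfrac{5\tau}{6}$ entering precisely in the weight bookkeeping, and the conversion factor $t^{-(\delta-2)/6}$ between the $\omega_{\C^3}$-norm and the $C^{0,\alpha}_{\delta-2,\tau-2,t}(X)$-norm producing exactly $t^{\delta'}$. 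Two minor remarks: the paper shortcuts what you call the main obstacle by observing $|\tfrac1t(A_y-A_0)\sqrt{-1}\,dy\wedge d\bar y|_{\omega_{\C^3}}\le Ct^{1/3}|y|\ll t^{2/3}R^2$, so the fluctuation term is pointwise dominated by the $\omega_X$ contribution and no separate horizontal trace computation in the neck is required; and the maximal admissible $\rho$ in this region is $\sim t^{\frac{6}{14+\tau}-\frac23}$ (coming from $|y|<t^{\frac{6}{14+\tau}}$) rather than $t^{-1/15}$, though this slip in your parenthetical does not affect the method.
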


\begin{proof}
As explained in Remark \ref{omegatmeaning}, in this region the deviation of $\omega_t$ from $(\frac{t}{2A_0})^{1/3} \omega_{\C^3}$ arises from  $\frac{1}{t}(A_y-A_0)\tilde{\omega}_Y$ and $\omega_X$.

In the coordinates $z_1, z_2, z_3$ on $F_t^{-1}(U_1)\subset \C^3$, the metric $\omega_X$ is comparable to the Euclidean metric $t^{2/3} \sqrt{-1} \partial \bar{\partial} R^4$, from which we get a bound for the weighted H\"older norm on $\C^3$, 
\[
\norm{ \omega_X}_{ C^{k,\alpha}_{2, 2} (U_1, \omega_{\C^3})     }\leq C t^{2/3}.
\]
and in particular its magnitude $|\omega_X|_{\omega_{\C^3}} \leq Ct^{2/3}(R+1)^2\ll t^{1/3}\sim | t^{1/3} \omega_{\C^3} |_{ \omega_{\C^3}  }$
in the region
$\{ |y|<t^{\frac{6}{14+\tau}  }, r< t^{1/10}+ t^{1/12} \rho'^{1/6}  \}$.
Morever, since $\delta< \frac{2}{3} +\frac{5\tau}{6}$, $-2<\tau<0$, we can deduce from the numerical properties of the weights that
\[
|\omega_X|_{ \omega_{\C^3}}\leq
\begin{cases}
Ct^{2/3}
R^2\leq C t^{ \frac{6}{14+\tau}(   \frac{2}{3}+ \frac{5\tau}{6}-\delta) + \frac{2}{3}\delta-\frac{1}{2}\tau  } \rho^{\delta-2}w^{\tau-2}, & R>1,\\
Ct^{2/3}\ll   C t^{ \frac{6}{14+\tau}(   \frac{2}{3}+ \frac{5\tau}{6}-\delta) + \frac{2}{3}\delta-\frac{1}{2}\tau  } , & R\lesssim 1.
\end{cases}
\]
As for $\frac{1}{t}(A_y-A_0)\tilde{\omega}_Y$, since $A_y$ is Lipschitz in $y$, this term is  $O(|y|)=O(t^{\frac{6}{14+\tau}  } )$ small compared to $\frac{1}{t}\tilde{\omega}_Y$ which is essentially the horizontal part of $t^{1/3}\omega_{\C^3}$, thus
\[
|\frac{1}{t}(A_y-A_0)\tilde{\omega}_Y |_{\omega_{\C^3}  }
\leq Ct^{1/3  } |y| \ll t^{2/3} R^2 ,
\]
so this contribution is insignificant compared to $\omega_X$. From this we deduce that the function $f_t'''=\frac{\omega_t^3}{ ( \frac{t}{2A_0}  )\omega_{\C^3}^3   }-1$ satisfies the estimate
\[
|f_t'''| \leq C t^{ \frac{6}{14+\tau}(   \frac{2}{3}+ \frac{5\tau}{6}-\delta) + \frac{2}{3}\delta-\frac{1}{2}\tau -\frac{1}{3} }
\begin{cases}
 \rho^{\delta-2}w^{\tau-2}, & R>1,\\
 1, & R\lesssim 1.
\end{cases}
\]
Proceeding further,
\[
\norm{f_t'''}_{  C^{0,\alpha}_{\delta-2, \tau- 2} (\{ |y|< t^{ \frac{6}{14+\tau} }, r< t^{1/10}+t^{1/12} \rho'^{1/6}  \}, \omega_{\C^3})   } \leq C t^{ \frac{6}{14+\tau}(   \frac{2}{3}+ \frac{5\tau}{6}-\delta) + \frac{2}{3}\delta-\frac{1}{2}\tau -\frac{1}{3} }.
\]
This cannot be improved to higher orders because we do not have higher order control for $A_y$.

Recall from section \ref{GeometryofthemodelmetricC3} that $\omega_{\C^3}^3= \frac{3}{2} \prod_1^3 \sqrt{-1} dz_i d\bar{z}_i$, so
\[
(\frac{t}{2A_0}  )\omega_{\C^3}^3 = \frac{3}{2} (\frac{t}{2A_0}  )\prod_1^3 \sqrt{-1} dz_i d\bar{z}_i
=\frac{3A_0}{t} \prod\sqrt{-1} d\mathfrak{z}_i d\bar{\mathfrak{z}}_i,
\]
which we compared to 
\[
\Omega=\sqrt{A_0} d\mathfrak{z}_1 d\mathfrak{z}_2 d\mathfrak{z}_3( 1+ O(\mathfrak{z})),\quad 
\sqrt{-1}\Omega\wedge \overline{\Omega}= \prod \sqrt{-1} d\mathfrak{z}_i d\bar{\mathfrak{z}}_i (1+ O(r^2)),
\]
where $O(\mathfrak{z})$ denotes a fixed holomorphic function. We see that 
\[
(\frac{t}{2A_0}  )\omega_{\C^3}^3
=a_t \sqrt{-1} \Omega\wedge \overline{\Omega} (1+ O(t)+ O(r^2)).
\]
The $O(r^2)$ term arises from the deviation of the holomorphic volume form $\Omega$ from  $\sqrt{A_0} d\mathfrak{z}_1 d\mathfrak{z}_2 d\mathfrak{z}_3$; its strength is comparable to the error caused by $\omega_X$ which we just analysed. 
 Both this error and $f_t'''$ contribute to $f_t= \frac{\omega_t^3}{ a_t\sqrt{-1} \Omega\wedge \overline{\Omega}   } -1 $. These lead to
\[
\norm{f_t}_{  C^{0,\alpha}_{\delta-2, \tau- 2} (\{ |y|< t^{ \frac{6}{14+\tau} }, r< t^{1/10}+t^{1/12} \rho'^{1/6}  \}, \omega_{\C^3})   } \leq C t^{ \frac{6}{14+\tau}(   \frac{2}{3}+ \frac{5\tau}{6}-\delta) + \frac{2}{3}\delta-\frac{1}{2}\tau -\frac{1}{3} }.
\]
Finally, to convert this into the weighted H\"older norm $C^{0,\alpha}_{\delta-2, \tau-2, t }$ on $X$, we need to multiply by an extra factor $t^{ -\frac{\delta-2}{6}  }$ (\cf Section \ref{WeightedHolderspacesonX}). This gives the claim.
\end{proof}

Combining the above lemmas, we get

\begin{prop}
	Let $-2<\tau<0$ and $\frac{3}{4}\tau- \frac{1}{2}< \delta< \frac{2}{3}+ \frac{5\tau}{6}$.
The volume form error is globally estimated by
\begin{equation}\label{omegaterror}
\norm{f_t}_{  C^{0,\alpha}_{\delta-2, \tau- 2,t} ( X ) } \leq C t^{ \delta' },
\end{equation}
where we recall $\delta'= \frac{6}{14+\tau}(   \frac{2}{3}+ \frac{5\tau}{6}-\delta) + \frac{1}{2}\delta-\frac{1}{2}\tau $.
\end{prop}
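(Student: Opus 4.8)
The plan is to deduce the global estimate (\ref{omegaterror}) by stitching together Lemma \ref{omegaterror1} and Lemma \ref{omegaterror2}, exploiting the locality of the weighted H\"older norm $C^{0,\alpha}_{\delta-2,\tau-2,t}(X)$. First I would record that the regions appearing in the two lemmas, namely $\{|y|\gtrsim t^{\frac{6}{14+\tau}}\}$ and $\{|y|<t^{\frac{6}{14+\tau}},\ r\gtrsim t^{1/10}+t^{1/12}\rho'^{1/6}\}$ from Lemma \ref{omegaterror1}, together with $\{|y|<t^{\frac{6}{14+\tau}},\ r<t^{1/10}+t^{1/12}\rho'^{1/6}\}$ from Lemma \ref{omegaterror2}, constitute a finite cover of $X$ in which only a bounded number of regions meet any given point, and whose members overlap in layers built into the $\gtrsim$/$\lesssim$ thresholds. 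The essential numerical input is that both lemmas return the identical exponent $\delta'=\frac{6}{14+\tau}(\frac{2}{3}+\frac{5\tau}{6}-\delta)+\frac{1}{2}\delta-\frac{1}{2}\tau$, so taking a maximum over the regional bounds loses nothing.

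Next I would invoke the locality of weighted H\"older norms, noted when $C^{k,\alpha}_{\delta,\tau,t}(X)$ was introduced: to bound $\norm{f_t}_{C^{0,\alpha}_{\delta-2,\tau-2,t}(X)}$ it suffices to control the restriction of the norm to each member of a finite cover with controlled overlaps. The supremum and weighted derivative parts of the norm are subadditive over the cover up to a factor proportional to the (bounded) overlap multiplicity. For the H\"older seminorm one only compares values of $f_t$ at points within distance $\ll r$; since the transition layers around the scales $|y|\sim t^{\frac{6}{14+\tau}}$ and $r\sim t^{1/10}+t^{1/12}\rho'^{1/6}$ are thick relative to this comparison radius, any such pair of nearby points lies inside a single region of the cover, so the seminorm too is genuinely controlled region by region.

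Applying Lemma \ref{omegaterror1} on the first two regions and Lemma \ref{omegaterror2} on the third gives $\leq Ct^{\delta'}$ for each restricted norm, and taking the maximum yields the global bound $\norm{f_t}_{C^{0,\alpha}_{\delta-2,\tau-2,t}(X)}\leq Ct^{\delta'}$. I would finally check that the admissible range of $\delta$ is consistent: Lemma \ref{omegaterror1} requires $\delta>\frac{3}{4}\tau-\frac{1}{2}$ and $\delta<\frac{2}{3}+\frac{5\tau}{6}$, while Lemma \ref{omegaterror2} requires only $\delta<\frac{2}{3}+\frac{5\tau}{6}$, so the intersection is exactly the interval $\frac{3}{4}\tau-\frac{1}{2}<\delta<\frac{2}{3}+\frac{5\tau}{6}$ asserted in the proposition. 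The step demanding the most care is the verification that the cover overlaps thickly enough for the full $C^{0,\alpha}$ seminorm (not merely the $C^0$ part) to be local and that no boundary contribution is dropped at the transition scales; because these overlaps were engineered into the lemma statements and both lemmas produce the same exponent, the matching across boundaries is automatic and what remains is routine bookkeeping.
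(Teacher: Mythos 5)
Your proposal is correct and follows exactly the paper's route: the paper's own proof consists of the single phrase ``Combining the above lemmas,'' relying on the fact that the regions of Lemma \ref{omegaterror1} and Lemma \ref{omegaterror2} cover $X$, that both return the same exponent $\delta'$, and on the locality of the weighted H\"older norms stated earlier. Your additional checks (overlap thickness for the H\"older seminorm, intersection of the admissible weight ranges) are exactly the bookkeeping the paper leaves implicit.
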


\begin{rmk}
	It is conceivable that some variant of the metric ansatz has smaller volume form error.
\end{rmk}

\begin{rmk}\label{HolderimpliesLinfty}
Since the norm $C^{0,\alpha}_{\delta-2, \tau-2, t}$ itself depends on $t$, some explanation is needed concerning how to appreciate the strength of an estimate like (\ref{omegaterror}). An important test is that such an estimate on any function $f$ should imply that $f$ is small in $L^\infty$ norm, which is the chief indication that nonlinear effects of the Monge-Amp\`ere equation are insignificant. Notably, in the region $U_1\simeq F_t^{-1}(U_1)\subset \C^3$, for $\delta\leq \frac{1}{2}+ \frac{3}{4}\tau $ and $-2<\tau<0$,
\[
\norm{f}_{L^\infty(U_1)} \leq C
\norm{ f} _{  C^{k,\alpha}_{\delta-2, \tau- 2} ( U_1, \omega_{\C^3} ) } \leq 
C\norm{ f} _{  C^{k,\alpha}_{\delta-2, \tau- 2,t} ( X ) } t^{ \frac{\delta-2}{6}  }.
\]
The reason for the constraint on the weight is to ensure for $R>1$, there is the inequality
\[
R^{\delta-2} w^{\tau-2} \leq C R^{\delta-2} R^{-\frac{3}{4}(\tau-2)    }\leq C.
\]
Thus (\ref{omegaterror}) is only useful for gluing purposes when
\[
-2<\tau<0, \quad  \frac{3}{4}\tau- \frac{1}{2}< \delta\leq \frac{1}{2}+ \frac{3}{4}\tau, \quad
\delta'+ \frac{\delta-2}{6}>0.
\]
The constraint $\delta< \frac{2}{3} + \frac{5}{6}\tau$ is implied by the other constraints. These constraints have solutions, for instance, if we special to $\tau= -\frac{2}{3}$, then we need $-\frac{3}{13}<\delta\leq 0$.
\end{rmk}

\subsection{Metric deviation}\label{Metricdeviation}

In the course of estimating the  volume form error we have essentially showed the closeness of $\omega_t$ to various simpler metrics in their respective regions. We now wish to state a coarser version of these estimates, valid on somewhat larger regions. This can be viewed as a quantified statement for the intuition discussed at the beginning of section \ref{WeightedHolderspacesonX}, and will be useful in section \ref{Decompositionpatchingparametrix}.

Let $\Lambda_2\gg1, \Lambda_3\gg1$ be two large numbers, and $0<\epsilon_3\leq \epsilon_1$ be a small number, all to be fixed independent of $t$. We demand that $\epsilon_3\ll (\frac{1}{ \Lambda_1\Lambda_2^2})^{12}$ is so small that the set $\{ r> \Lambda_2^{-2} (t^{1/10}+ t^{1/12} \rho'^{1/6} ) , |y|< \epsilon_3 \}$ is contained in $U_2$. For technical convenience, we impose further that ${\epsilon_3^{1/3}}{\Lambda_2^8}\ll \epsilon_3^{1/7} $.


\begin{prop}\label{metricdeviationerror}
Given $\epsilon_3, \Lambda_2, \Lambda_3$ as above, then as long as $t$ is sufficiently small, the following estimates hold.
\begin{itemize}
	\item
In the region $ \{ r\lesssim t^{1/10}+ t^{1/12} \rho'^{1/6}   \}\cap \{ |y|< \epsilon_3  \}\subset U_1 $, the metric $\omega_t$ deviates from the scaled $\C^3$ model metric by
\[
\norm{\omega_t- (\frac{ t}{2A_0}  ) ^{1/3} \omega_{\C^3}  }_{ C^{0,\alpha}_{0, 0, t}    }\leq C(\alpha) \epsilon_3^{1/7}.
\]	

\item 
In the region $\{ r> \Lambda_2^{-2} (t^{1/10}+ t^{1/12} \rho'^{1/6} ) , |y|< \epsilon_3 \} \subset U_2$, which can be identified as a subset of $X_0\times \C$ via the trivialisation $G_0$, the metric $\omega_t$ deviates from the product metric by
\[
\norm{\omega_t- G_0^*( \omega_{SRF}|_{X_0}+ \frac{1}{t} A_0 \sqrt{-1} dy\wedge d\bar{y}   )     }_{ C^{0,\alpha}_{0, 0, t}    }\leq C(\alpha) \epsilon_3^{1/7}.
\]

\item
Let $X_{y'}$ be any fibre with $|y'|> \frac{\epsilon_3}{2}$. For $|y-y'|\leq \Lambda_3 t^{1/2}\ll \epsilon_2 |y'|$, the trivialisation $G_{y'}$ is well defined. The metric $\omega_t$ deviates from the product metric in the region $\{ |y-y'|\leq \Lambda_3 t^{1/2} \}$ by
\[
\norm{\omega_t- G_{y'}^*( \omega_{SRF}|_{X_{y'}}+ \frac{1}{t} A_{y'} \sqrt{-1} dy\wedge d\bar{y}   )     }_{ C^{0,\alpha}_{0, 0, t}    }\leq C(\alpha, \epsilon_3) \Lambda_3 t^{1/2}.
\]
Here if $y'$ goes beyond the coordinate neighbourhood, then $A_{y'} \sqrt{-1}dy\wedge d\bar{y}$ should be replaced by the local Euclidean metric on the base which best approximates $\tilde{\omega}_Y$.

\end{itemize}
\end{prop}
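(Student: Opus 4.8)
The plan is to read Proposition \ref{metricdeviationerror} as a \emph{coarsening} of the work already done in Lemmas \ref{omegaterror1} and \ref{omegaterror2}: the three items correspond precisely to the three pieces $U_1,U_2,U_3$ entering the definition of $\norm{\cdot}_{C^{0,\alpha}_{0,0,t}(X)}$, and in each the model metric is exactly the local model flagged in Remark \ref{omegatmeaning}. In each region I would write out $\omega_t$ from the explicit ansatz (\ref{omegat}), subtract the model, express the difference as a sum of error $2$-forms, and bound each in the blunt $\delta=\tau=0$ norm. Since these are weaker (larger weight) statements than the volume-form estimates, no genuinely new computation is needed; one only has to track the relevant exponents at the extreme values of $r$ and $|y|$ in each enlarged region. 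Throughout I use that for a $2$-form $|\theta|_{\lambda g}=\lambda^{-1}|\theta|_{g}$, so that the $t$-prefactors in the norm convert $\omega_{\C^3}$-magnitudes into scaled-metric magnitudes.

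For item~1, in $U_1$ the terms $\phi_{\C^3}'$ and $\sqrt{R^4+\rho}$ appear identically in $\omega_t$ and in $(\frac{t}{2A_0})^{1/3}\omega_{\C^3}$, and after the identity $\frac{1}{t}A_0\sqrt{-1}dy\wedge d\bar{y}=(\frac{t}{2A_0})^{1/3}\sqrt{-1}\partial\bar{\partial}(\frac{1}{2}|\tilde{y}|^2)$ of Remark \ref{omegatmeaning} the difference collapses to $\omega_X+\frac{1}{t}(A_y-A_0)\sqrt{-1}dy\wedge d\bar{y}$ together with the cutoff commutator supported near $r\sim t^{1/10}+t^{1/12}\rho'^{1/6}$. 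Using $|\omega_X|_{\omega_{\C^3}}\leq Ct^{2/3}(R+1)^2$ from the proof of Lemma \ref{omegaterror2}, and the bound $R\lesssim t^{-1/6}\epsilon_3^{1/6}$ valid throughout this region (which follows since $\rho'\sim t^{-1/2}\epsilon_3$ forces $r\lesssim\epsilon_3^{1/6}$ at $|y|\sim\epsilon_3$), one gets $|\omega_X|_{(t/2A_0)^{1/3}\omega_{\C^3}}=t^{-1/3}|\omega_X|_{\omega_{\C^3}}\lesssim\epsilon_3^{1/3}$; the Lipschitz term is $O(t^{1/3}|y|)$ and the cutoff commutator is controlled by (\ref{phiC3error}) together with the comparison of $\sqrt{R^4+\rho}$ and $\sqrt{R^4+|\tilde{y}|}$ and of $G_0^*(\psi_0-c_0)$ with the fibre part of $\phi_\infty$. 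As $\epsilon_3^{1/3}\leq\epsilon_3^{1/7}$, item~1 follows.

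For item~2, $G_0$ is fibration-preserving, so $G_0^*(\sqrt{-1}dy\wedge d\bar{y})=\sqrt{-1}dy\wedge d\bar{y}$, and the difference from the product model splits into the fibrewise deviation (\ref{omegatfibrewiseerror2}), the horizontal and mixed parts of $\sqrt{-1}\partial\bar{\partial}G_0^*\psi_0$, the Lipschitz term $\frac{1}{t}(A_y-A_0)\sqrt{-1}dy\wedge d\bar{y}$, and the horizontal/mixed part of $\omega_X$. By the horizontal-differentiation principle established in the proof of Lemma \ref{omegaterror1}, each horizontal derivative costs an extra $O(t^{1/2}r^{-3})$, so the horizontal and mixed contributions are $O(t)$-small after normalisation and negligible; the Lipschitz term is $O(|y|)\lesssim\epsilon_3$, likewise subdominant. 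The dominant contribution is the fibrewise one: by (\ref{omegatfibrewiseerror2}) its size relative to the product metric is $\max(|y|,t^{3/5-\epsilon})\,r^{-4}$, which is largest at the inner edge $r\sim\Lambda_2^{-2}(t^{1/10}+t^{1/12}\rho'^{1/6})\sim\Lambda_2^{-2}\epsilon_3^{1/6}$ with $|y|\sim\epsilon_3$, giving $\sim\Lambda_2^{8}\epsilon_3^{1/3}$. The imposed smallness requirement $\epsilon_3^{1/3}\Lambda_2^{8}\ll\epsilon_3^{1/7}$ is exactly what absorbs this into the claimed bound.

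For item~3, $X_{y'}$ is bounded away from the singular fibre, so $r\sim1$ and there is no small-$r$ amplification. The easier Lipschitz estimate in the Remark following Lemma \ref{Lipschitztypeestimatepotentialfunction} controls $G_{y',y}^*\psi_{y'}-\psi_y$ by $C|y-y'|\leq C\Lambda_3 t^{1/2}$; the Lipschitz variation of $A_y$ contributes $O(\Lambda_3 t^{1/2})$; only boundedly many $\chi_i$ meet the window $|y-y'|\leq\Lambda_3 t^{1/2}$, so the grafting errors do not accumulate; and horizontal derivatives now cost only $O(t^{1/2})$. This yields the bound $C(\alpha,\epsilon_3)\Lambda_3 t^{1/2}$, the constant depending on $\epsilon_3$ through the lower bound on $|y'|$. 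The main obstacle is item~2 at the inner cutoff: the weight-$(-4)$ fibrewise deviation is magnified by the blunt $\delta=\tau=0$ norm into the factor $\Lambda_2^{8}\epsilon_3^{1/3}$, and one must verify that the single technical condition $\epsilon_3^{1/3}\Lambda_2^{8}\ll\epsilon_3^{1/7}$ simultaneously dominates this, the analogous magnification of the H\"older seminorm, and the reconciliation of $\sqrt{R^4+\rho}$, $G_0^*(\psi_0-c_0)$ and $\phi_\infty$ across the $U_1\cap U_2$ overlap where $\gamma_1,\gamma_2$ vary.
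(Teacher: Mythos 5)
Your treatment of item 1 has a genuine gap. You assume that throughout the region $\{r\lesssim t^{1/10}+t^{1/12}\rho'^{1/6}\}\cap\{|y|<\epsilon_3\}$ the ansatz (\ref{omegat}) literally contains the terms $(\frac{t}{2A_0})^{1/3}(\phi_{\C^3}'+\sqrt{R^4+\rho})$, so that they cancel against $(\frac{t}{2A_0})^{1/3}\omega_{\C^3}$ and leave only $\omega_X$, the Lipschitz term in $A_y$, and a cutoff commutator. But those terms enter $\omega_t$ only with the coefficient $\chi_0$, which vanishes for $|y|\geq 2t^{\frac{6}{14+\tau}}$. Since $\epsilon_3$ is fixed while $t\to 0$, the bulk of the region has $\chi_0=0$, and there $\omega_t=\omega_X+\frac{1}{t}\tilde{\omega}_Y+\sqrt{-1}\partial\bar{\partial}\sum_{i\geq 1}\chi_i G_{y_i}^*\psi_{y_i}$: the fibre part is the grafted genuine Calabi--Yau potential, not the $\C^3$ model. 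This is why the paper's proof splits at $|y|\sim t^{\frac{6}{14+\tau}}$: below that scale the deviations are suppressed by positive powers of $t$ and extractable from Lemmas \ref{omegaterror1} and \ref{omegaterror2} (as you do), while above it one must compare the fibrewise CY metrics $\omega_{SRF}|_{X_y}$ with the Eguchi--Hanson fibres of the scaled model. By Proposition \ref{CYmetriconXy} this costs $O(|y|^{\frac{2}{3}-\frac{1}{6}\beta}r^{\beta-2})$, i.e.\ $O(\epsilon_3^{\frac{1}{12}(\beta+2)})$ near the vanishing cycles $r\sim|y|^{1/4}$, for any $-2<\beta<0$. This term, of size nearly $\epsilon_3^{1/6}$, is the dominant error and is precisely what dictates the exponent $1/7$ in the statement (anything below $1/6$ works); your claimed dominant error $\epsilon_3^{1/3}$ is strictly smaller, and your argument as written does not see the true dominant term at all.

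The same issue affects item 2: you invoke (\ref{omegatfibrewiseerror2}) across the whole region, but that estimate was established only for $|y|<t^{\frac{6}{14+\tau}}$. For $t^{\frac{6}{14+\tau}}\lesssim|y|<\epsilon_3$ the fibrewise deviation must instead be assembled from (\ref{omegatfibrewiseerror1}) (a $t$-suppressed term), Proposition \ref{CYmetriconXy} (the CY-versus-Eguchi--Hanson term), and the $O(|y|/r^4)$ comparison of the Eguchi--Hanson metric with $G_{0,y}^*(\omega_{SRF}|_{X_0})$ from Lemma \ref{CYmetriconXyintermediatelemma}. Your figure $\Lambda_2^{8}\epsilon_3^{1/3}$ happens to come out right for this last piece, but it is not justified by the citation you give, and you also misidentify the binding constraint: the exponent $1/7$ is forced by the $\epsilon_3^{1/6-}$ fibre term above, while the inner-cutoff term is separately absorbed by the standing hypothesis $\epsilon_3^{1/3}\Lambda_2^{8}\ll\epsilon_3^{1/7}$. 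Your item 3 is fine and agrees with the paper's (easy) argument.
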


\begin{proof}
(Sketch) For brevity, we will only indicate how to estimate the metric deviation in the $L^\infty$ norm. The weighted H\"older improvement is no more difficult, given the methods in Lemma \ref{omegaterror1} and \ref{omegaterror2}.

When $|y|< t^{ \frac{6}{14+\tau}  }$ the first two estimates are essentially extractable from the calculations in Lemma \ref{omegaterror1} and Lemma \ref{omegaterror2}, the point being that the metric deviation is bounded by some positive power of $t$, so when $t$ is sufficiently small all these terms are negligible compared to any bound independent of $t$.

Let $|y|\gtrsim t^{\frac{6}{14+\tau}  }$.
We consider first the deviation of $\omega_t$ from $(\frac{t}{2A_0}  )^{1/3}\omega_{\C^3}$. In the region $\{ r\lesssim t^{1/10}+ t^{1/12} \rho'^{1/6}   \}\cap \{   t^{ \frac{6}{14+\tau} } \leq |y|< \epsilon_3     \}$, around a given fibre $X_{y''}$ the metric deviation is primarily the deviation between $(\frac{t}{2A_0 })^{1/3} \omega_{\C^3}$ and the local product metric $ G_{y''}^*(\omega_{SRF}|_{X_{y''}}+ \frac{1}{t}\sqrt{-1}A_{y''} dy\wedge d\bar{y})$. In the base direction, the only deviation which is not suppressed by a power of $t$ comes from $\frac{1}{t}(A_{y''}-A_0) \sqrt{-1} dy\wedge d\bar{y}$, which is of order $O(|y|)=O(\epsilon_3)$. In the fibre direction, we can use Proposition \ref{CYmetriconXy} to estimate the deviation of $\omega_{SRF}|_{X_{y}}$  from the Eguchi-Hanson metric on fibres, which is of order $O(|y|^{\frac{2}{3}-\frac{1}{6}\beta + \frac{1}{4}(\beta-2)  } )= O( \epsilon_3^{\frac{1}{12}(\beta+2)} )$ for any $-2<\beta<0$. The fibrewise deviation of $(\frac{t}{2A_0 })^{1/3} \omega_{\C^3}$ from the Eguchi-Hanson metric is negligible in this region. 
In particular, these errors are all controlled by $C\epsilon_3^{1/7}$. The exponent is not optimal; anything less than $\frac{1}{6}$ will do.

Next we consider the deviation of $\omega_t$ from the product metric $G_0^*(\omega_{SRF}|_{X_0}+ \frac{1}{t} A_0 \sqrt{-1} dy\wedge d\bar{y})$, in the region
$
\{ r> \Lambda_2^{-2} (t^{1/10}+ t^{1/12} \rho'^{1/6} ) , t^{\frac{6}{14+\tau} }\lesssim |y|< \epsilon_3 \}.
$
Apart from the $O(\epsilon^{1/7})$ error in the previous case, there is an additional contribution, caused by the deviation of Eguchi-Hanson metric from $G_0^* (\omega_{SRF}|_{X_0})$, which is of order $O( \frac{|y|}{r^4}  )= O( \frac{|y|}{|y|^{2/3  } \Lambda_2^{-8 }} )= O ({\epsilon_3^{1/3}}{\Lambda_2^8}   )  
$. By our imposed assumptions, this term is also dominated by $O( \epsilon_3^{1/7})$.

The last claim of this Proposition deals with fibres bounded away from the singular fibre, and is therefore easy.
\end{proof}

\section{Inverting the Laplacian}

We first describe the harmonic analysis on various model spaces of $\omega_t$ at different scales, and then produce a parametrix of the Green operator by means of decomposition and patching, a method I learnt from G. Sz\'ekelyhidi \cite{Gabor}.

Let $\mathcal{A}^{0,\alpha}_{ \delta-2, \tau-2,t  }(X)$ be the space of $\partial \bar{\partial}$-exact (1,1) type forms completed under the $C^{0,\alpha}_{\delta-2, \tau-2,t  }$ norm on 2-forms. The trace over $\omega_t$ defines a bounded map
\[
\Tr_{\omega_t}: \mathcal{A}^{0,\alpha}_{ \delta-2, \tau-2,t  }(X) \to C^{0, \alpha}_{\delta-2, \tau-2, t}(X).
\]
The image lies in the subspace $\{\int_{X} f\omega_t^3=0\}$ by construction. 
Our main result for the linear analysis is

\begin{prop}\label{Greenoperator}
Let $-2+\alpha<\delta<0$, $-2+\alpha<\tau<0$, and assume $\delta$ avoids a discrete set of values. Then there exists a right inverse $\mathcal{R}$ to $\Tr_{\omega_t}$ on the subspace of average zero functions,
\[
\mathcal{R}: \{  f\in C^{0, \alpha}_{\delta-2, \tau-2, t}(X)   : \int_X f\omega_t^3 =0 \} \to \mathcal{A}^{0,\alpha}_{\delta-2, \tau-2,t  } ,
\]
with norm bound $\norm{\mathcal{R}}\leq C(\delta, \tau, \alpha)$ independent of $t$.
\end{prop}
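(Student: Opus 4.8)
The plan is to invert $\Tr_{\omega_t}$ by the standard Sz\'ekelyhidi decomposition-and-patching scheme, exploiting the fact that the weighted space $C^{0,\alpha}_{\delta-2,\tau-2,t}(X)$ was built by glueing the three model geometries catalogued in Section \ref{WeightedHolderspacesonX}: the scaled $\C^3$ model $(\frac{t}{2A_0})^{1/3}\omega_{\C^3}$ on $U_1$, the product $X_0\times\C$ on $U_2$, and the near-regular fibred geometry $X_{y'}\times\C$ on $U_3$. For each model I would first record a scale-invariant Green operator: on $(\C^3,\omega_{\C^3})$ the Laplacian is an isomorphism $C^{2,\alpha}_{\delta,\tau}\to C^{0,\alpha}_{\delta-2,\tau-2}$ for the stated weight ranges (this is the content of \cite{Gabor}, and is where the hypothesis that $\delta$ avoid a discrete set of indicial roots enters); on the product $X_0\times\C$ one separates variables, using $\Lap_{\omega_{SRF}|_{X_0}}$ on the orbifold K3 fibre together with the flat Laplacian on the $\C$ factor, and the fibrewise isomorphism is exactly Lemma \ref{CYmetriconXyHoldermappingproperty} specialised to $y=0$; on $U_3$ one uses the fibrewise CY Laplacians $\Lap_{\omega_{SRF}|_{X_{y'}}}$ of Lemma \ref{CYmetriconXyHoldermappingproperty} fibred over the collapsing base, where the large factor $\frac{1}{t}\tilde{\omega}_Y$ makes the base-direction Laplacian dominant.

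The construction of $\mathcal{R}$ then proceeds in the usual steps. First I would fix a partition of unity subordinate to the cover $\{U_1,U_2,U_3\}$, with cutoffs having bounded $C^{k,\alpha}_{0,0,t}$ norm (so multiplication by them is bounded on the weighted spaces, by the same $t$-bookkeeping used in Lemma \ref{omegaterror1}). Given $f$ with $\int_X f\,\omega_t^3=0$, I decompose $f=\sum_i f_i$ into pieces supported in the three regions, invert the Laplacian on each piece with the appropriate model Green operator to obtain potentials $u_i$, and set $u=\sum_i \chi_i u_i$. Applying $\sqrt{-1}\partial\bar\partial$ gives an exact $(1,1)$-form $\Theta$ whose trace $\Tr_{\omega_t}\Theta$ differs from $f$ by a controlled error coming from two sources: the commutator terms where derivatives land on the cutoffs, and the discrepancy $\Tr_{\omega_t}-\Tr_{\omega_{\mathrm{model}}}$ on each overlap. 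The metric deviation estimates of Proposition \ref{metricdeviationerror} are exactly what bound the second source: on each overlap $\|\omega_t-\omega_{\mathrm{model}}\|_{C^{0,\alpha}_{0,0,t}}\leq C\epsilon_3^{1/7}$ (or $\leq C\Lambda_3 t^{1/2}$ on $U_3$), so the trace discrepancy is a small multiple of $\|f\|$, while the cutoff-commutator terms are made small by choosing the glueing parameters $\Lambda_2,\Lambda_3,\epsilon_3$ appropriately and then taking $t$ small. This yields an approximate right inverse $\mathcal{R}_0$ with $\|\Tr_{\omega_t}\mathcal{R}_0-\mathrm{Id}\|\leq \frac{1}{2}$ on the average-zero subspace, whence $\mathcal{R}=\mathcal{R}_0(\Tr_{\omega_t}\mathcal{R}_0)^{-1}$ is a genuine right inverse with $t$-independent norm by Neumann series.

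The main obstacle, as always in these collapsing problems, is uniformity of the constant in $t$. The weighted norm (\ref{weightedHoldernorm}) carries explicit powers of $t$ precisely so that the three model Green operators, each scale-invariant in its own right, glue with $t$-independent constants; the delicate point is to check that on the transition regions $U_1\cap U_2$ and $U_2\cap U_3$ the weight relations $\rho'\sim t^{1/6}\rho$, $w'\sim w$ make the model operators agree up to bounded factors, so that no power of $t$ leaks through the patching. A secondary subtlety is that the error operator $\Tr_{\omega_t}\mathcal{R}_0-\mathrm{Id}$ need not preserve the average-zero constraint exactly; this is repaired by projecting off the (one-dimensional) space of constants, using that $\int_X(\Tr_{\omega_t}\Theta)\,\omega_t^3=0$ automatically for exact forms, so the Neumann inversion takes place genuinely on $\{\int_X f\,\omega_t^3=0\}$. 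Finally, the hypothesis that $\delta$ avoids a discrete set is unavoidable: it is precisely the condition that $\delta$ not coincide with an indicial root of the model Laplacians, guaranteeing surjectivity rather than mere injectivity at the critical weights.
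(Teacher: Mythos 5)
There is a genuine gap, and it lies exactly where the real difficulty of the proposition sits. Your decomposition is purely spatial (a partition of unity subordinate to $\{U_1,U_2,U_3\}$), but no such decomposition can work, for two reasons. First, $U_3$ (and likewise $U_2$ away from the node) is not close to any \emph{single} model geometry: the product approximation $X_{y'}\times \C$ of Proposition \ref{metricdeviationerror} is only valid over base discs of radius $O(\Lambda_3 t^{1/2})$, so to use product Green operators you are forced to chop the region into $N'\sim 1/t$ pieces, and the cutoff/commutator errors from these $O(1/t)$ pieces would accumulate unless the local solutions decay away from the support of their forcing terms. Second, and more fundamentally, such decay is simply false for general forcing terms: it holds only for functions with \emph{fibrewise average zero}, where the Poincar\'e inequality on the fibre gives a spectral gap and hence Yukawa-type exponential decay (Lemma \ref{HolderXytimesC}, Proposition \ref{HolderXytimesCweighted2}). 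The fibrewise average part (the ``zeroth Fourier mode'') solves an effectively two-dimensional Laplace equation on the collapsed base; it propagates globally with logarithmic Green function and cannot be inverted locally by any model operator, nor made small by choosing $\epsilon_3,\Lambda_2,\Lambda_3$. Note also that the isomorphism on $X_0\times\C$ you invoke (your ``separation of variables'') is only valid on the average-zero subspace $C^{2,\alpha,ave}_{\delta,\tau}$ for exactly this reason, and your statement that the factor $\frac1t\tilde\omega_Y$ makes the base Laplacian ``dominant'' is backwards: in $\Lap_{\omega_t}$ the base derivatives are suppressed by a factor of $t$, and it is precisely this degeneration that creates the zero-mode problem.

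What the paper actually does, and what your outline is missing, is a simultaneous spatial \emph{and spectral} decomposition: each piece $\chi_i'f$ is split into a fibrewise-average-zero part $f_i$ plus a function $f_i'$ on the base; the $f_i$ (and the analogous pieces $f_{0,1}$, $f_{0,2}$ near the singular fibre) are inverted by local model operators and cut off, with non-accumulating errors thanks to exponential decay (and, near the node, a logarithmic cutoff gaining a factor $1/\log\Lambda_2$ --- an ordinary bounded cutoff gives an $O(1)$ commutator error there, not a small one); the collected zero modes $f_Y$ are corrected to have zero mean and then solved by a \emph{global} Poisson equation on $(Y,\frac1t\tilde\omega_Y)$ (Lemma \ref{GreenY}), where one estimates only $\sqrt{-1}\partial\bar\partial P_Yf$ rather than $P_Yf$ itself. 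This base equation is also where the hypothesis $\int_X f\,\omega_t^3=0$ is genuinely consumed (Lemma \ref{fYweightedHolderaverageproperty}); in your proposal it appears only in the cosmetic role of preserving the constraint under Neumann iteration. Without the spectral splitting, the exponential localisation, and the global base solve, the approximate inverse $\mathcal{R}_0$ you construct does not satisfy $\norm{\Tr_{\omega_t}\mathcal{R}_0-\mathrm{Id}}\leq \frac12$ with a $t$-independent constant, and the Neumann series step collapses.
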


In this Chapter we will use the analyst's Laplacian $\Lap_{\omega_t}= 2\Tr_{\omega_t} \sqrt{-1} \partial \bar{\partial}$. The right inverse $\mathcal{R}$ can be thought schematically as $\mathcal{R}=2\sqrt{-1}\partial \bar{\partial} \Lap_{\omega_t}^{-1}$. It maps a real valued function to a real  (1,1)-form.
We emphasize that the $t$-independent bound is optimal, which is the main strength of the method.

\subsection{Harmonic analysis for $\omega_{\C^3}$ }

We need the mapping property of the weighted function space $C^{k,\alpha}_{\delta, \tau}(\C^3, \omega_{\C^3})$, introduced in Section \ref{GeometryofthemodelmetricC3}.

The following can be extracted from \cite{Gabor}, which shows how to invert the Laplacian outside a large ball. It is proved by producing an approximate Green operator. (Strictly speaking \cite{Gabor} deals with the Laplacian of an approximation of $\omega_{\C^3}$, but near spatial infinity their difference is negligible.)

\begin{lem}(\cf \cite{Gabor} Proposition 6)
Let $-2< \tau<0$ and let $\delta$ avoid a discrete set of values.
There exists a sufficiently large radius $A\gg1$ and an operator $P_A: C^{0, \alpha}_{\delta-2, \tau-2}(\C^3) \to C^{2, \alpha}_{\delta, \tau}(\C^3)$
with $\norm{P_A f}_{ C^{2, \alpha}_{\delta, \tau} } \leq C \norm{f}_{ C^{0, \alpha}_{\delta-2, \tau-2}  }$, such that $\Lap P_A f=f$ in the exterior region $\{|z|>A\}$.
\end{lem}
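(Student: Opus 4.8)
The plan is to follow the decomposition-and-patching strategy of \cite{Gabor}, producing $P_A$ by splicing together the Green operators of the two model geometries that control $(\C^3,\omega_{\C^3})$ near infinity. Away from the vanishing cycles the metric is asymptotic to its tangent cone $\C^2/\Z_2\times\C$, whereas in a tubular neighbourhood of the singular line, after rescaling by the local cycle size $\sim\rho^{1/4}$, it is asymptotic to $EH_1\times\C$. The double weight $C^{k,\alpha}_{\delta,\tau}$ is designed precisely for this dichotomy: $\rho$ measures the conical scale, carrying the weight $\delta$, while $w$ measures the transverse Eguchi--Hanson scale, carrying the weight $\tau$.

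First I would record the mapping properties of the two model Laplacians separately. On the cone $\C^2/\Z_2\times\C$, with the singular line deleted where the metric is smooth, separation of variables reduces $\Lap u=f$ to a radial ODE governed by the spectrum of the link Laplacian; the homogeneous Green operator $C^{0,\alpha}_{\delta-2}\to C^{2,\alpha}_{\delta}$ is bounded, and can be taken \emph{exact} on the model, precisely when $\delta$ avoids the indicial roots, which is the discrete set named in the hypothesis. On $EH_1\times\C$, which is itself asymptotically locally Euclidean modelled on $\C^2/\Z_2\times\C$, one inverts $\Lap$ in the $\tau$-weighted spaces; the assumption $-2<\tau<0$ places $\tau$ strictly between the two exceptional weights of the Eguchi--Hanson factor, namely the constant solution at weight $0$ and the four-dimensional Green's function decay at weight $-2$, which guarantees a bounded inverse with no kernel or cokernel obstruction.

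Next I would cover the exterior $\{\rho>A\}$ by dyadic annuli $\{\rho\sim 2^j\}$, and inside each annulus insert a cutoff separating the conical region $\{R\gtrsim\rho^{1/4}\}$ from the Eguchi--Hanson region $\{R\lesssim\rho^{1/4}\}$, with $w$ interpolating across the neck $\{R\sim\rho^{1/4}\}$. After rescaling each piece to unit size the metric is uniformly close to the relevant model, so the corresponding model inverse applies with a bound independent of $j$. Decomposing $f$ by a subordinate partition of unity, inverting each piece with the appropriate model Green operator, and patching the results yields an operator $P_A$ whose error $\Lap P_A f-f$ is supported in the transition regions and collects the commutators $[\Lap,\chi]$. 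Following \cite{Gabor}, one arranges the model inversions so that these contributions are reabsorbed in the exterior, giving $\Lap P_A f=f$ exactly on $\{\rho>A\}$ once $A$ is large; the stated operator bound holds because the whole construction takes place on the fixed space $(\C^3,\omega_{\C^3})$.

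The main obstacle is the harmonic analysis in the double-weighted spaces together with the consistent matching of the two weights across the neck: one must verify that the $\tau$-weighted estimate valid near the singular line glues to the purely conical $\delta$-weighted estimate where $w\to 1$, so that the local estimates assemble into a single $C^{2,\alpha}_{\delta,\tau}$ bound, and that the commutator errors improve enough at each scale for the patching to close. Since \cite{Gabor} carries out exactly this analysis for a metric agreeing with $\omega_{\C^3}$ to negligible order near infinity, which does not affect the exterior estimate, the lemma follows.
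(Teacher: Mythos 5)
Your proposal is correct and follows essentially the same route as the paper, which itself defers to \cite{Gabor} and describes the proof exactly as you do: build an approximate Green operator by patching the model inverses on the tangent cone $\C^2/\Z_2\times\C$ (where avoiding the indicial roots gives invertibility, \cf Proposition 13 of \cite{Gabor}) and on $EH_1\times\C$ near the singular line, with the double weight $(\delta,\tau)$ mediating between the two regimes, then remove the residual error so the equation holds exactly on the exterior. The only place you are vaguer than the source is the final absorption step --- in \cite{Gabor} the cutoff errors supported in the exterior are killed by a Neumann-series iteration while those supported in the compact set are simply tolerated, which is precisely why the conclusion is only $\Lap P_A f=f$ on $\{|z|>A\}$ --- but this matches the intended argument.
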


\begin{rmk}
The purpose for $\delta$ to avoid the discrete set of indicial roots, is to make sure the model Laplace operator on $\C^2/\Z_2 \times \C$ is invertible on a double weighted H\"older space (\cf Proposition 13 in \cite{Gabor}). For example, if $-2< \tau<0$ and $-2<\delta<0$, then this condition is automatic.
\end{rmk}

\begin{prop}\label{HolderC3}
Let $-2<\tau<0$, and  $\delta>-4$ avoids a discrete set of values. Then there exists a bounded right inverse   $P_{\C^3}: C^{0, \alpha}_{\delta-2, \tau-2}(\C^3)\to C^{2, \alpha}_{\delta, \tau}(\C^3)$ to the Laplacian.
\end{prop}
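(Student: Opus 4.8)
The plan is to upgrade the exterior right inverse $P_A$ of the preceding Lemma to a global one by patching it with interior Schauder theory on a fixed large ball, and then to remove the resulting finite-dimensional obstruction using the hypothesis $\delta>-4$. Fix cutoff functions $\chi_1,\chi_2$ with $\chi_1+\chi_2=1$, where $\chi_1$ is supported in $\{|z|>A\}$ and equals $1$ on $\{|z|>2A\}$. Given $f\in C^{0,\alpha}_{\delta-2,\tau-2}(\C^3)$, let $u_{\mathrm{int}}$ solve the Dirichlet problem $\Lap u_{\mathrm{int}}=f$ on $\{|z|<2A\}$ with $u_{\mathrm{int}}=0$ on the boundary, which is solvable with the Schauder bound $\norm{u_{\mathrm{int}}}_{C^{2,\alpha}(\{|z|<2A\})}\leq C\norm{f}_{C^{0,\alpha}(\{|z|<2A\})}$. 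Set $Qf=\chi_2 u_{\mathrm{int}}+\chi_1 P_A f$. Since the weight functions $\rho,w$ are bounded above and below on the fixed ball, the interior term is controlled in $C^{2,\alpha}_{\delta,\tau}$ by its ordinary $C^{2,\alpha}$ norm on $\{|z|<2A\}$, while the exterior term is bounded by the Lemma; hence $Q:C^{0,\alpha}_{\delta-2,\tau-2}\to C^{2,\alpha}_{\delta,\tau}$ is bounded.

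A direct computation, using $\Lap u_{\mathrm{int}}=\Lap P_A f=f$ on the overlap annulus together with $[\Lap,\chi_2]=-[\Lap,\chi_1]$, gives $\Lap Qf=f+Ef$ with $Ef=[\Lap,\chi_1](P_A f-u_{\mathrm{int}})$. The commutator $[\Lap,\chi_1]$ is a first-order operator supported in a compact subset of the annulus $\{A<|z|<2A\}$. On that annulus both $P_A f$ and $u_{\mathrm{int}}$ solve $\Lap(\cdot)=f$, so their difference is harmonic and is therefore controlled in $C^{2,\alpha}$ of a slightly smaller annulus by its $C^0$ norm, which in turn is bounded by $\norm{f}_{C^{0,\alpha}_{\delta-2,\tau-2}}$. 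Thus $E$ sends bounded sets of $C^{0,\alpha}_{\delta-2,\tau-2}$ into bounded subsets of $C^{1,\alpha}$ on a fixed compact set, and since $C^{1,\alpha}\hookrightarrow C^{0,\alpha}$ compactly on that set, $E$ is a compact operator on $C^{0,\alpha}_{\delta-2,\tau-2}$. Consequently $\mathrm{Id}+E$ is Fredholm of index zero, $\mathrm{Range}(\Lap)\supseteq\mathrm{Range}(\mathrm{Id}+E)$ is closed of finite codimension, and so $\Lap$ itself has closed range of finite codimension.

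It remains to show that the cokernel vanishes, so that $\Lap$ is genuinely surjective. A continuous functional annihilating $\mathrm{Range}(\Lap)$ is represented by a distribution $v$ in the dual weighted space with $\Lap v=0$ weakly; by elliptic regularity $v$ is an honest harmonic function, and the $L^2$-duality conjugate to the weight $\delta$ (here the tangent cone $\C^2/\Z_2\times\C$ has real dimension $6$, so the conjugate rate is $\rho^{-4-\delta}$) forces $v$ to decay like $\rho^{-4-\delta}$ at infinity. Since $\delta>-4$ this exponent is negative, so $v\to 0$ at infinity; as $(\C^3,\omega_{\C^3})$ is complete, the maximum principle gives $v\equiv 0$. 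Hence $\Lap$ is surjective with closed range, and the open mapping theorem produces a bounded right inverse $P_{\C^3}:C^{0,\alpha}_{\delta-2,\tau-2}\to C^{2,\alpha}_{\delta,\tau}$. The main obstacle is precisely this last step: making the duality-and-regularity identification of the cokernel rigorous (this is the Lockhart--McOwen theory, which is exactly why $\delta$ must avoid the discrete set of indicial roots to guarantee closed range and index-zero Fredholmness), and recognising that $\delta=-4$ is the critical Green's-function rate below which genuine obstructions appear, so that $\delta>-4$ is the sharp threshold for surjectivity.
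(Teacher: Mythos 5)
Your patching construction and the compactness claim are sound: $\Lap Qf = f + Ef$ with $E$ compact is correct, and it does show that $\Lap$ has closed range of finite codimension. The genuine gap is the step that kills the cokernel. You represent a functional annihilating $\mathrm{Range}(\Lap)$ by a distribution $v$, show $v$ is harmonic and decays, conclude $v\equiv 0$, and deduce surjectivity. This argument breaks down for H\"older spaces: $C_c^\infty$ is \emph{not} dense in $C^{0,\alpha}_{\delta-2,\tau-2}$ (the closure of the smooth functions is the strictly smaller ``little H\"older'' space), so a continuous functional is not determined by its restriction to test functions. By Hahn--Banach there exist nonzero functionals on $C^{0,\alpha}$ that vanish on every smooth function; hence proving that the representing distribution $v$ vanishes does not prove that the functional vanishes, and the range could in principle still be a proper closed subspace. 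The Lockhart--McOwen machinery you invoke to repair this is a weighted \emph{Sobolev} theory, where duality is legitimate; to use it here you would need to build the full Fredholm package in doubly weighted Sobolev spaces for $\omega_{\C^3}$ --- whose geometry at infinity is not smoothly asymptotically conical (the tangent cone $\C^2/\Z_2\times\C$ has singular cross-section, and near the vanishing cycles the model is Eguchi-Hanson times $\C$, where the second weight enters the duality with conjugate exponent $-2-\tau$, which your sketch ignores) --- and then transfer back to H\"older by Schauder estimates. None of that is carried out, and it is exactly the hard part.

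The paper's proof avoids Fredholm theory and duality altogether, and this is the ingredient your proposal is missing. Since $\Lap P_A f = f$ outside $\{|z|\le A\}$, the function $g = f - \Lap P_A f$ is already compactly supported (no interior Dirichlet solve or cutoff error is needed), and satisfies $|g|\le C\norm{f}\rho^{\delta'-2}$ for any $-4<\delta'<\min\{\delta,0\}$. The paper then cites Hein's Theorem 1.6: on a complete Ricci-flat manifold with Euclidean volume growth (real dimension $n=6$ here), the Poisson equation $\Lap u = g$ admits a solution with $|u|\le C\rho^{\delta'}$ precisely when $\delta' > 2-n = -4$. This is where the hypothesis $\delta>-4$ does its work --- your identification of $-4$ as the Green's-function rate is the right intuition, but in the paper it enters through an \emph{existence} theorem rather than through a duality argument. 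Harmonicity of $u$ outside the ball then bootstraps the $L^\infty$ bound to the full $C^{2,\alpha}_{\delta,\tau}$ estimate. In short: what your proof needs, and does not supply, is a genuine solvability result for compactly supported data with prescribed decay; the H\"older-space duality you substitute for it cannot be made rigorous as written.
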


\begin{proof}
By the above lemma, it suffices to invert the Laplacian for functions $f$ with support in $\{|z|\leq A\}$. In particular $f$ satisfies $|f|\leq C\norm{f} \rho^{\delta'-2}$ for any choice of $-4<\delta'<\min\{\delta, 0\}$. But since $(\C^3, \omega_{\C^3})$ is Ricci flat with Euclidean volume growth, the function $\rho$ is uniformly equivalent to the distance to the origin outside the unit ball, and $\rho|\Lap \rho|+ |\nabla \rho|\leq C$, so we can apply Theorem 1.6 in \cite{Hein} to find a unique  function $u$ solving the Poisson equation with estimate
\[
\Lap u=f, \quad |u|\leq C\norm{f}_{C^{0, \alpha}} \rho^{\delta} \leq C\norm{f} \rho^{\delta} w^{\tau}.
\]
Since $u$ is harmonic in $\{|z|>A\}$, we can bootstrap this to a $C^{2, \alpha}_{\delta, \tau}$ estimate on $u$,
$\norm{u}_{ C^{2, \alpha}_{\delta, \tau}  } \leq C\norm{f}_{ C^{0, \alpha}_{\delta-2, \tau-2}      }$.
\end{proof}

\subsection{Harmonic analysis for $\text{K3}\times \C$}

Let $X_y$ be a K3 fibre equipped with the Calabi-Yau metric $\omega_{SRF}|_{X_y}$ in the class $[\omega_X|_{X_y}]$. We consider the case where either $X_y$ is one of the nodal fibres, or $y$ is bounded away from the set $S$ of critical values, so that $X_y$ has bounded geometry. We sketch the harmonic analysis on $X_y\times \C$ with the product metric $\omega_{SRF}|_{X_y}+ \frac{\sqrt{-1}}{2} d\zeta \wedge d\bar{\zeta}$, following the established method of G. Sz\'ekelyhidi \cite{Gabor}, T. Walpuski \cite{Walpuski} and S. Brendle \cite{Brendle}. Here $\zeta$ denotes the
 standard coordinate on $\C$. Of particular importance to us is an exponential decay property when the forcing term has fibrewise average zero and compact support, which will be exploited later to localise the Green operators.

We begin by working with the usual H\"older space $C^{k,\alpha}(X_y\times \C)$. Let  $C^{k,\alpha,ave}(X_y\times \C)$ denote the subspace of functions with average zero on $X_y$ fibres.

\begin{lem}\label{HolderXytimesC}
The Laplacian  $\Lap: C^{2, \alpha,ave}(X_y\times \C) \to C^{0, \alpha,ave}(X_y\times \C)$ is an isomorphism, with bounded inverse $P_y=\Lap^{-1}, \norm{P_y}\leq C(y)$. Morever, if the forcing term $f\in C^{0, \alpha,ave}$ is supported in $X_y\times \{|\zeta|<B\}$ for some $B>1$, then outside of $X_y\times \{|\zeta|<B+1\}$ the function $P_y f$ has exponential decay:
\begin{equation}
|P_y f|\leq C(y, \alpha) e^{-m (|\zeta|-B) }\norm{f}_{C^{0,\alpha}}, \quad |\zeta|\geq B+1.
\end{equation}
Similarly for the higher derivatives.
The constants $C(y)$ are independent of $B$, and are uniform for $y$ bounded away from $S$, or $y\in S$.
\end{lem}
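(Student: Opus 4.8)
The plan is to exploit the product structure, under which the Laplacian splits as $\Lap = \Lap_{X_y} + \Lap_{\C}$, and to reduce the equation to a family of screened (massive) Laplace equations on $\C = \R^2$ indexed by the fibre spectrum. Since $(X_y, \omega_{SRF}|_{X_y})$ is a compact K3 surface, or for $y\in S$ a compact orbifold with an isolated $\C^2/\Z_2$ singularity carrying the orbifold CY metric, the fibre operator $-\Lap_{X_y}$ has discrete spectrum $0=\lambda_0<\lambda_1\leq\lambda_2\leq\cdots$ with an $L^2$ eigenbasis $\{\phi_k\}$; the average-zero condition discards exactly the constant mode $\phi_0$, so every surviving mode obeys $\lambda_k\geq\lambda_1>0$. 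Writing $f(x,\zeta)=\sum_{k\geq1}f_k(\zeta)\phi_k(x)$ with $f_k(\zeta)=\int_{X_y}f(\cdot,\zeta)\phi_k\,dV$, the equation $\Lap u=f$ decouples into $(\Lap_{\R^2}-\lambda_k)u_k=f_k$ on $\C$. Each such operator has strictly positive mass $\lambda_k$, is invertible on $\R^2$ with Green's function a multiple of $K_0(\sqrt{\lambda_k}\,|\zeta-\zeta'|)$, and I would assemble $P_y f=\sum_k u_k\phi_k$ as a candidate inverse.

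To establish the quantitative isomorphism I would run a blow-up argument rather than summing Green's functions by hand. Using translation invariance in $\zeta$, suppose the a priori bound $\norm{u}_{C^{2,\alpha}}\leq C\norm{\Lap u}_{C^{0,\alpha}}$ fails; normalise $\norm{u_i}_{C^{2,\alpha}}=1$, recentre near a point realising the norm, and extract via interior Schauder estimates on unit balls (uniform by bounded geometry of the product) a nonzero bounded limit $u_\infty$ on $X_y\times\C$ that is harmonic and average-zero on fibres. A Liouville step yields a contradiction: decomposing $u_\infty$ into fibre modes, each $u_{\infty,k}$ is a bounded solution of $(\Lap_{\R^2}-\lambda_k)u_{\infty,k}=0$ with $\lambda_k>0$, and in angular Fourier modes one gets modified Bessel equations whose only bounded entire solutions vanish, so $u_\infty\equiv0$. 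Surjectivity then follows by solving on the exhausting cylinders $X_y\times\{|\zeta|<R\}$ with the uniform estimate and passing to the limit.

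For the exponential decay I would use a comparison argument. Where $f\equiv0$, that is for $|\zeta|>B$, the solution is annihilated mode-by-mode by the screened operator $\Lap_{\R^2}-\lambda_k$. Since $\Lap_{\R^2}e^{-m|\zeta|}=(m^2-m/|\zeta|)e^{-m|\zeta|}$, for any fixed $m\leq\sqrt{\lambda_1}\leq\sqrt{\lambda_k}$ one has $(\Lap_{\R^2}-\lambda_k)e^{-m|\zeta|}\leq0$, so $Ce^{-m(|\zeta|-B)}\norm{f}_{C^{0,\alpha}}$ is a supersolution on $\{|\zeta|>B\}$ and the maximum principle bounds $|P_y f|$ by it; interior Schauder on unit balls centred at large $|\zeta|$, where the equation is homogeneous, upgrades this to exponential decay of all derivatives. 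The uniform rate $m$ is precisely the square root of the spectral gap. Finally, the constants depend only on the geometry of $(X_y,\omega_{SRF}|_{X_y})$ and on $\lambda_1(y)$: for $y$ in a compact set away from $S$ these fibre metrics form a smoothly varying family of uniform bounded geometry with $\lambda_1(y)$ bounded below by continuity of eigenvalues (the fibres retain volume $1$, precluding collapse), while for $y\in S$ the orbifold metric is fixed, so $C(y)$ is uniform in both regimes.

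I expect the delicate point to be the uniform lower bound on the spectral gap $\lambda_1(y)$ together with uniform bounded geometry as $y$ ranges over the smooth fibres degenerating towards $X_0$, and, relatedly, making the Liouville and Schauder machinery run on the singular orbifold fibre $X_0$, where one must work with $\Z_2$-invariant functions near the node and verify that the interior estimates and eigenfunction theory are unaffected by the orbifold point.
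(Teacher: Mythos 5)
Your overall skeleton --- exploit the product structure, let the fibre spectral gap act as a mass, prove coercivity by blow-up plus a Liouville theorem, and get decay from a supersolution comparison --- matches the paper's proof, which cites Walpuski's Liouville lemma for the kernel, runs the same blow-up, obtains surjectivity by Fourier analysis in the $\C$ direction plus weak approximation (where you use exhaustion by cylinders), and derives the decay from the fibrewise $L^2$ energy. However, your decay step has a genuine gap. The function $u=P_yf$ solves the \emph{full} equation $\Lap u=0$ on $\{|\zeta|>B\}$, and for the full Laplacian $e^{-m|\zeta|}$ is not a barrier in any useful sense: constants are harmonic on $X_y\times\C$, so no maximum principle applied directly to $u$ can produce decay --- the average-zero hypothesis must enter quantitatively. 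Your comparison $(\Lap_{\R^2}-\lambda_k)e^{-m|\zeta|}\le 0$ is valid only mode-by-mode, and yields $|u_k(\zeta)|\le \sup_{|\zeta'|=B}|u_k|\,e^{-m(|\zeta|-B)}$ for each $k\ge1$. The lemma, though, asserts a bound on $|u|=|\sum_k u_k\phi_k|$, and with only the uniform bound $\sup|u_k|\le C\norm{u}_{L^\infty}$ the series $\sum_k\norm{\phi_k}_{L^\infty}e^{-m(|\zeta|-B)}$ diverges, since $\norm{\phi_k}_{L^\infty}\to\infty$. So "the maximum principle bounds $|P_yf|$ by it" is precisely the unproved step. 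To repair it you must either (a) use the mode-dependent rates $e^{-\sqrt{\lambda_k}(|\zeta|-B)}$ together with eigenfunction sup-norm bounds and the Weyl law to resum (and re-verify these on the orbifold fibre when $y\in S$), or (b) bypass eigenfunctions entirely, which is what the paper does: set $g(\zeta)=\int_{X_y}u^2$, compute $\Lap_\C g=2\int_{X_y}|\nabla u|^2\ge m'^2 g$ by the fibrewise Poincar\'e inequality (this is exactly where average-zero enters), compare $g$ against an explicit supersolution containing the term $\epsilon e^{\epsilon|\zeta|}$ to control infinity, send $\epsilon\to0$, and then upgrade the resulting $L^2$ decay to pointwise decay of $u$ and all derivatives by elliptic regularity on unit balls, where $u$ is harmonic. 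Route (b) is shorter and avoids all spectral asymptotics; with your mode decomposition it amounts to applying the comparison to $g=\sum_k u_k^2$ rather than to each $u_k$ separately.

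Two smaller remarks. First, your exhaustion argument for surjectivity needs the uniform estimate to hold for the Dirichlet problems on finite cylinders, so the blow-up must also treat base points that stay at bounded distance from the boundary; this works (odd reflection across the boundary reduces to the entire Liouville theorem), but it is an extra case you should record. Second, your closing concern about a uniform spectral gap and bounded geometry "as $y$ ranges over the smooth fibres degenerating towards $X_0$" addresses a claim the lemma does not make: uniformity of $C(y)$ is asserted only for $y$ bounded away from $S$, or for $y\in S$ itself, never in the degenerating regime $y\to 0$; indeed the paper's later patching argument only ever invokes this lemma for $|y'|\gtrsim\epsilon_3$ or for the nodal fibre. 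So your continuity-of-eigenvalues argument on compact sets, together with the fixed orbifold case, is all that is required there.
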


\begin{proof}
	(sketch)
By standard Schauder estimate
\[
\norm{u}_{C^{2,\alpha}} \leq C \norm{\Lap u}_{C^{0, \alpha}}+ C\norm{u}_{L^\infty}.
\]
Applying Lemma 7.5 in \cite{Walpuski}, the kernel of $\Lap: C^{2, \alpha}(X_y\times \C) \to C^{0, \alpha}(X_y\times \C)$ must be constant on the $\C$ factor, so must be a global constant. It is clear from Fourier decomposition in the fibre direction that $\Lap$ restricts to a map between the subspaces of functions with fibrewise average zero, where the kernel of $\Lap$ is removed. Then a standard blow up argument shows the coercivity estimate $\norm{u}_{C^{2,\alpha}} \leq C \norm{\Lap u}_{C^{0, \alpha}}$. 

For the surjectivity claim, we can use Fourier analysis to invert $\Lap$ for smooth functions with fibrewise average zero, whose Fourier transform in the $\C$ direction has compact support. 
One can then remove the compact support assumption by an approximation argument in the weak topology, using the above
 coercivity estimate (\cf Page 23, \cite{Gabor} for a very similar argument).

For the exponential decay estimate, one considers the fibrewise $L^2$ integral $g(\zeta)=\int_{X_y} u^2(\cdot, \zeta)$, which is a function on $\C$. Since $\Lap u=0$ for $|\zeta|>B$, we have 
\[
\Lap_\C g= \int_{X_y} 2|\nabla_{\C} u|^2 + 2 u \Lap_\C u= \int_{X_y} 2|\nabla_{\C} u|^2 -  2 u \Lap_{X_y} u =\int_{X_y} 2|\nabla u|^2.
\]
Now since the fibrewise average is imposed to be zero, by the Poincar\'e inequality  $\int_{X_y} |\nabla u|^2 \geq m'^2 g$ for some $m'>0$, so we have
\[
\Lap_\C g \geq m'^2 g,\quad |\zeta|>B.
\]
The $L^\infty$ bound on $g$ is already bounded in terms of $\norm{\Lap u}_{C^\alpha}$. We now compare $g$ on  $\{ |\zeta|\geq B \}$
with a positive supersolution
\[
\tilde{g}_\epsilon= C\norm{\Lap u}_{C^\alpha}  \log (2|\zeta|/B) e^{-m'(|\zeta|-B)}+ \epsilon e^{\epsilon|\zeta|}, \quad 0<\epsilon\ll1
\]
satisfying $\Lap_\C \tilde{g}_\epsilon\leq  m'^2 \tilde{g}_\epsilon$,  to deduce $g\leq \tilde{g}_\epsilon$
in the region $\{ |\zeta|\geq B \}$ . Sending $\epsilon\to 0$ gives an exponential decay estimate on $g$ with any decay rate $0<m<m'$, and since $u$ is harmonic outside $\{|\zeta|\leq B\}$, by elliptic regularity this implies exponential decay on $u$ and all the higher derivatives.
\end{proof}

\begin{rmk}
	The physical intuition of this exponential decay is that massive particles have exponentially decaying Yukawa potentials.
\end{rmk}

Now let $X_0$ be a nodal K3 fibre.
Recall the double weighted H\"older space $C^{k,\alpha}_{\delta, \tau}(X_0\times \C)$ from section \ref{WeightedHolderspacesonX}, which is adapted to viewing $X_0\times \C$ as a space with local conical singularity at the origin with singular link. Recall 
the vertical distance to the nodal line is comparable to $r$, the distance in the base direction is $|\zeta|$, and $\rho'= \sqrt{ |\zeta|^2+ r^2}$ essentially measures the distance to the origin.


\begin{lem}\label{HolderXytimesCpushforward}
Let $-2<\tau<0$, and $\delta>-2$, then functions $f$ in $C^{0,\alpha}_{\delta-2, \tau-2}(X_0\times \C)$ can be integrated along $X_0$ fibres, 
 \[
\pi_*{f}(\zeta)=\int_{X_0\times \{\zeta\} } f, \quad |\pi_*{f}(\zeta)| \leq C(1+|\zeta| )^{\delta-\tau} \norm{f}_{C^{0, \alpha}_{\delta-2, \tau-2}(X_0\times \C) }.
\]
If morever $\min{(\tau, \delta)} > -2+ \alpha$, then for $|\zeta-\zeta'|\leq 1$, 
\[
|\pi_*{f}(\zeta)- \pi_*{f} (\zeta')| \leq C(1+|\zeta| )^{\delta-\tau} |\zeta-\zeta'|^\alpha \norm{f}_{C^{0, \alpha}_{\delta-2, \tau-2}(X_0\times \C) }.
\]
\end{lem}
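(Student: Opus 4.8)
The plan is to reduce both assertions to weighted volume integrals over the conical fibre $X_0$, exploiting that near the node $X_0$ is modelled on the flat orbifold $\C^2/\Z_2$, where $r$ is comparable to the distance to the node and the volume form satisfies $dV\sim r^3\,dr\,d\Theta$ over the compact link $S^3/\Z_2$, while away from the node $X_0$ has bounded geometry and finite volume. For the pointwise bound I would first invoke the defining inequality $|f|\le \norm{f}_{C^{0,\alpha}_{\delta-2,\tau-2}}\,\rho'^{\delta-2}w'^{\tau-2}$, so that $|\pi_*f(\zeta)|\le \norm{f}\int_{X_0}\rho'^{\delta-2}w'^{\tau-2}\,dV$ with $\rho'=\sqrt{r^2+|\zeta|^2}$ and $w'=\min(1,r/\kappa\rho')$ evaluated at the fixed $\zeta$. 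Splitting the fibre into the near-node region $r\le r_0$ and the bulk $r\gtrsim r_0$, and further into $r\lesssim|\zeta|$ (where $\rho'\sim|\zeta|$, $w'\sim r/|\zeta|$) and $r\gtrsim|\zeta|$ (where $\rho'\sim r$, $w'\sim1$), each piece is an elementary radial integral. The dominant contribution for $|\zeta|\gg1$ is $\int_0^{r_0}|\zeta|^{\delta-2}(r/|\zeta|)^{\tau-2}r^3\,dr=|\zeta|^{\delta-\tau}\int_0^{r_0}r^{\tau+1}\,dr$, which converges precisely because $\tau>-2$ and yields the claimed factor $(1+|\zeta|)^{\delta-\tau}$; the piece $r\gtrsim|\zeta|$ converges thanks to $\delta>-2$, and the bulk contributes at the same order.

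For the Hölder estimate I would write $\pi_*f(\zeta)-\pi_*f(\zeta')=\int_{X_0}\bigl(f(x,\zeta)-f(x,\zeta')\bigr)\,dV(x)$, note that $(x,\zeta)$ and $(x,\zeta')$ are joined by a vertical segment of length $d\sim|\zeta-\zeta'|=:h$, and split $X_0$ according to whether $r(x)\ge\Lambda h$ (``far'', with $\Lambda$ a large fixed constant so that the seminorm constraint $d\ll r$ holds) or $r(x)<\Lambda h$ (``near''). Two elementary facts make the far region tractable: the weight product satisfies $\rho'w'\sim r$ uniformly, and $W:=\rho'^{\delta-2}w'^{\tau-2}$ is slowly varying in $\zeta$, in the sense that $|\nabla_\zeta W|\lesssim W/r$, both of which follow by inspecting the explicit formulas in the two regimes $r>\kappa\rho'$ and $r\le\kappa\rho'$.

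In the far region I would apply the $[\,\cdot\,]_{0,\alpha}$ seminorm to $T:=\rho'^{2-\delta}w'^{2-\tau}f$ (so $|T|\le\norm{f}$) and expand $f(x,\zeta)-f(x,\zeta')=W(P)\bigl(T(P)-T(P')\bigr)+\bigl(W(P)-W(P')\bigr)T(P')$ with $P=(x,\zeta)$, $P'=(x,\zeta')$. The first term is bounded using $|T(P)-T(P')|\le\norm{f}(\rho'w')^{-\alpha}d^\alpha\sim\norm{f}\,r^{-\alpha}h^\alpha$; the second using the slow-variation estimate and $h\le r$ to write $|W(P)-W(P')|\lesssim W(h/r)\le W(h/r)^\alpha$; both yield $|f(x,\zeta)-f(x,\zeta')|\lesssim \norm{f}\,\rho'^{\delta-2}w'^{\tau-2}r^{-\alpha}h^\alpha$. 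Integrating over $\{r\gtrsim h\}$ reproduces the radial integral of the first part with one extra factor $r^{-\alpha}$; it now converges exactly because $\tau>-2+\alpha$, and again produces $(1+|\zeta|)^{\delta-\tau}h^\alpha$. In the near region, where the seminorm is unavailable, I would simply estimate $|f(x,\zeta)-f(x,\zeta')|\le|f(x,\zeta)|+|f(x,\zeta')|$ and integrate each pointwise bound over $\{r\lesssim h\}$; the resulting integrals carry factors $h^{\tau+2}$ or $h^{\delta+2}$ according to whether $|\zeta|$ exceeds $\Lambda h$ or not, and since $h\le1$ these are absorbed into $h^\alpha$ precisely when $\min(\tau,\delta)>-2+\alpha$, confirming the sharpness of that hypothesis.

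The main obstacle I anticipate is the far-region weight-variation term $\bigl(W(P)-W(P')\bigr)T(P')$: controlling it cleanly requires the slow-variation estimate $|\nabla_\zeta W|\lesssim W/r$ together with the identity $\rho'w'\sim r$, and care that the weight change is measured on the correct scale $r$ (rather than $\rho'$), so that the two contributions match and combine to the single clean bound $\rho'^{\delta-2}w'^{\tau-2}r^{-\alpha}h^\alpha$. Everything else is bookkeeping of radial integrals at the conical point, whose thresholds of convergence pin down the stated ranges of $\delta$ and $\tau$.
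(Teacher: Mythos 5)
Your proof is correct and follows essentially the same route as the paper's: the pointwise bound reduces via $\rho'^{\delta-2}w'^{\tau-2}\sim \rho'^{\delta-\tau}r^{\tau-2}$ to the fibrewise integrability of $r^{\delta-2}$ and $r^{\tau-2}$, and the H\"older bound to that of $r^{\delta-2-\alpha}$ and $r^{\tau-2-\alpha}$, with the same thresholds $\delta>-2$, $\tau>-2$ and $\min(\tau,\delta)>-2+\alpha$ emerging from the same radial integrals. The paper compresses the entire second half into the sentence ``use the integrability of $r^{\delta-2-\alpha}, r^{\tau-2-\alpha}$ instead,'' so your far/near decomposition (seminorm plus weight-variation on $\{r\gtrsim |\zeta-\zeta'|\}$, pointwise bounds on $\{r\lesssim |\zeta-\zeta'|\}$) is exactly the detail that one-liner leaves implicit.
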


\begin{proof}
For the first claim, notice \[
|f|\leq C\rho'^{\delta-2}  w'^{\tau-2}\norm{f} = C\rho'^{\delta-\tau} r^{\tau-2} \norm{f} \leq  C\norm{f} \max{ (r^{\delta-2}, r^{\tau-2}  ) }   ,\]
so if $r^{\delta-2}, r^{\tau-2}$ are both integrable along fibres, then $|\pi_*f(\zeta)|\leq C\norm{f} $ for $|\zeta|\lesssim 1$. When $|\zeta|>1$, then $\rho'$ is uniformly equivalently to $|\zeta|$, so $|f|\leq C|\zeta|^{\delta-\tau} r^{\tau-2}$, and we obtain $|\pi_*f(\zeta)|\leq C |\zeta|^{\delta-\tau} \norm{f}$.

For the weighted H\"older statement, we use the integrability of $r^{\delta-2-\alpha}, r^{\tau-2-\alpha}$ instead.
\end{proof}

For such weights we can make sense of the subspace $C^{0,\alpha,ave}_{\delta-2,\tau-2}(X_0\times \C) \subset C^{0,\alpha}_{\delta-2,\tau-2}(X_0\times \C)$ of functions with zero average on fibres. Likewise with the subspace $C^{2, \alpha,ave}_{ \delta, \tau  }(X_0\times \C)\subset C^{2, \alpha}_{ \delta, \tau  }(X_0\times \C)$.
Observe also that $L^2$ integrals on fibres make sense for functions in $C^{2, \alpha}_{ \delta, \tau  }(X_0\times \C)$.

\begin{prop}\label{HolderXytimesCweighted2}
Let $-2+ \alpha<\tau<0$, and $-2+\alpha<\delta<0$.
The Laplacian $\Lap: C^{2, \alpha, ave}_{\delta, \tau}(X_0\times \C) \to C^{0, \alpha,ave}_{\delta-2, \tau-2}(X_0\times \C)$ is an isomorphism, with bounded inverse $P_{y=0}=\Lap^{-1}$. Morever, if the forcing term $f\in C^{0, \alpha,ave}_{\delta-2, \tau-2}$ is supported in $X_0\times \{|\zeta|<B\}$, then outside of $X_0\times \{|\zeta|<B+1 \}$ the function $P_{y=0} f$ has exponential decay:
	\begin{equation}
	|P_{y=0} f|\leq C(\alpha, \tau, \delta) e^{-m (|\zeta|-B)}\norm{f}_{C^{0,\alpha}_{\delta-2, \tau-2} }, \quad |\zeta|\geq B+1.
	\end{equation}
\end{prop}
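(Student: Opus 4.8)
The plan is to follow the template of the smooth case (Lemma \ref{HolderXytimesC}) together with the weighted analysis on $(\C^3,\omega_{\C^3})$ (Proposition \ref{HolderC3}), the only genuinely new feature being the local conical singularity of $X_0$ along the nodal line, which the double weight $\rho'^{\delta}w'^{\tau}$ is designed to accommodate. I would deduce the isomorphism statement from a coercivity (a priori) estimate
\[
\norm{u}_{C^{2,\alpha}_{\delta,\tau}(X_0\times\C)}\leq C\norm{\Lap u}_{C^{0,\alpha}_{\delta-2,\tau-2}(X_0\times\C)},\qquad u\in C^{2,\alpha,ave}_{\delta,\tau},
\]
together with the vanishing of the cokernel, and then read off the exponential decay by the fibrewise $L^2$ supersolution argument already used in Lemma \ref{HolderXytimesC}.

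For the coercivity estimate I would first record the weighted interior Schauder estimate $\norm{u}_{C^{2,\alpha}_{\delta,\tau}}\leq C\norm{\Lap u}_{C^{0,\alpha}_{\delta-2,\tau-2}}+C\norm{\rho'^{-\delta}w'^{-\tau}u}_{L^\infty}$, which is local and follows by rescaling the unit-scale elliptic estimate on balls of radius $\sim r$ of uniformly bounded geometry. Coercivity then amounts to removing the zeroth order term by a standard blow-up/contradiction argument. Negating the estimate produces $u_i$ with $\norm{u_i}_{C^{2,\alpha}_{\delta,\tau}}=1$ and $\norm{\Lap u_i}_{C^{0,\alpha}_{\delta-2,\tau-2}}\to0$; choosing points $p_i$ realising the weighted norm up to a fixed factor and rescaling by the appropriate local length scale, one extracts in the limit a nonzero bounded solution of $\Lap v=0$ on one of the model spaces: the smooth product $X_0\times\C$ (when $p_i$ stays in a bounded region away from the singular line), the flat $\C$ (when $p_i\to\infty$ away from the singular line, where the compact factor $X_0$ collapses), or the tangent cone $(\C^2/\Z_2)\times\C$ (when $p_i$ approaches the singular line, either near the origin or at infinity). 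In each case $v$ carries the weight $\rho^{\delta}w^{\tau}$ and must vanish: on $\C$ by Liouville since $\delta<0$; on $(\C^2/\Z_2)\times\C$ because $\delta,\tau$ lie strictly between the relevant indicial roots of the conical link, which for $-2+\alpha<\delta<0$ and $-2+\alpha<\tau<0$ is automatic (\cf the remark following the lemma preceding Proposition \ref{HolderC3}); and on $X_0\times\C$ the fibrewise-average-zero condition together with the spectral gap of $\Lap_{X_0}$ on the compact orbifold $X_0$ (exactly as in Lemma \ref{HolderXytimesC}) forces $v=0$. This contradiction yields coercivity, hence injectivity.

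Surjectivity I would obtain as in the smooth case: invert $\Lap$ by Fourier analysis in the $\zeta$ variable for data with compactly supported Fourier transform and fibrewise average zero — for each frequency $\xi$ one solves $(\Lap_{X_0}+|\xi|^2)$ on the nodal fibre using the weighted elliptic theory on the orbifold $X_0$ (the orbifold analogue of Lemma \ref{CYmetriconXyHoldermappingproperty}) — and then remove the compact-support hypothesis by a weak-limit approximation argument based on the coercivity estimate, as on page 23 of \cite{Gabor}. Alternatively, since $\Lap$ is formally self-adjoint, the vanishing of the cokernel follows by rerunning the same blow-up argument for the dual weights, which again fall in the admissible window.

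Finally, for the exponential decay I would repeat the supersolution argument of Lemma \ref{HolderXytimesC} essentially verbatim. For $f$ supported in $X_0\times\{|\zeta|<B\}$ the solution $u=P_{y=0}f$ is harmonic for $|\zeta|>B$; setting $g(\zeta)=\int_{X_0\times\{\zeta\}}u^2$, the product structure gives $\Lap_{\C}g=\int_{X_0}2|\nabla u|^2$, and the Poincar\'e inequality on $X_0$ (valid since $\tau>-2$ makes $u$ fibrewise $L^2$) gives $\Lap_{\C}g\geq m'^2 g$ for $|\zeta|>B$; comparison with the supersolution $C\norm{\Lap u}\log(2|\zeta|/B)e^{-m'(|\zeta|-B)}+\epsilon e^{\epsilon|\zeta|}$ and letting $\epsilon\to0$ gives exponential decay of $g$, which elliptic regularity upgrades to $u$ and all its derivatives. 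I expect the main obstacle to be the blow-up step: one must verify that the rescaled metrics converge to the claimed model geometries with enough regularity to pass the equation to the limit, and, crucially, that no nontrivial weighted solution survives on the cone $(\C^2/\Z_2)\times\C$ — that is, the precise identification of the indicial roots of the singular link, which is exactly what pins down the admissible range of $\delta$ and $\tau$.
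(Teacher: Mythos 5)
Your overall strategy --- weighted interior Schauder estimate, coercivity by a blow-up/contradiction argument, surjectivity by an approximation (or duality) argument, and exponential decay via the fibrewise $L^2$ supersolution --- is exactly the paper's, and your treatment of the bounded-region case, the conical cases near the nodal line, and the decay estimate all agree with the paper's proof of Proposition \ref{HolderXytimesCweighted2}.

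However, one case of your blow-up dichotomy fails as stated: the regime where $p_i\to\infty$ in the $\zeta$-direction while $r(p_i)$ stays bounded below. You claim the model there is ``the flat $\C$, where the compact factor $X_0$ collapses'', and you propose to kill the limit by Liouville on $\C$ using $\delta<0$. But the metric on $X_0\times\C$ is a fixed product metric, invariant under translation in $\zeta$: nothing collapses, the local length scale is $\sim 1$, and the correct limit (after translating $p_i$ to a fixed base point, with \emph{no} metric rescaling) is again $X_0\times\C$, now carrying the bound $O(r^\tau)$ inherited from the weight (the factor $\rho'^{\delta-\tau}$ is absorbed into the normalisation of $u_i$). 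A collapse could only be manufactured by zooming out, but then the fibre curvature blows up in the rescaled metric, uniform interior elliptic estimates are lost, and a fibrewise-average-zero sequence --- whose entire content is oscillation along the would-be collapsing fibres --- has no nonzero limit as a function on $\C$; so no contradiction can be extracted along that route, and Liouville on the base never applies. The paper's proof handles this regime (its fourth blow-up case) by translation, extension of the limit harmonic function across the nodal line (using $\tau>-2$ for local integrability), and then the same fibrewise Poincar\'e/supersolution comparison that you invoke in your bounded case; since you already have that argument, the repair is simply to re-route this case through it, but as written the step would fail. As a minor further difference, your surjectivity route (fibrewise Fourier analysis, solving $(\Lap_{X_0}+|\xi|^2)$ with weighted orbifold theory, or self-adjoint duality) is not the paper's: the paper approximates $f$ by unweighted forcing terms, solves with Lemma \ref{HolderXytimesC}, and extracts a weak limit using coercivity; both are workable, though yours additionally requires weighted estimates on the nodal fibre that are uniform in the frequency $\xi$.
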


\begin{proof}
From elliptic estimates
\[
	\norm{u}_{C^{2,\alpha}_ { \delta,\tau} } \leq C \norm{\Lap u}_{C^{0, \alpha}_{\delta-2,\tau-2}}+ C\norm{\rho'^{-\delta} w'^{-    \tau} u}_{L^\infty}.
\]
We claim the coercivity estimate $\norm{u}_{C^{2,\alpha}_{\delta,\tau} } \leq C \norm{\Lap u}_{C^{0, \alpha}_{\delta-2,\tau-2}}$. If this fails, then
we consider a blow up sequence with $\norm{\rho'^{-\delta} w'^{-\tau} u_i}_{L^\infty}=1$, $\norm{\Lap u_i}_{C^{0,\alpha}_{\delta-2, \tau-2} }\to 0$, and $|\rho'^{-\delta} w'^{-\tau} u_i(x_i)|>\frac{1}{2}$, where $x_i$ does not lie on the nodal line. The elliptic estimates provide a uniform $C^{2, \alpha}_{\delta, \tau}$ bound, and we will use Arzela-Ascoli to extract a subsequence to reach a contradiction. 
\begin{itemize}
\item If $x_i$ tends to the origin, and $\frac{r(x_i)}{\rho'(x_i)}$ is uniformly bounded positively from below, then we perform a metric scaling so that the distance from $x_i$ to the origin is normalised to 1, and scale the function $u_i$ to have value 1 at $x_i$. 
Passing to the scaled limit, we get a nontrivial harmonic function on the flat $\C^2/\Z_2\times \C$ away from the nodal line, with a double power law bound of the type $O(\rho'^\delta w'^\tau)$. (Here we abuse notation to denote by  $\rho'$ and $w'$ the corresponding quantities  on $\C^2/\Z_2\times \C$.) For our range of weights the harmonic function is in $L^1_{loc}$, so  extends over the nodal line by  standard elliptic regularity. But our weights also force decay in the $\C^2/\Z_2$ direction, so the harmonic function is trivial, contradiction. \\

\item If $r(x_i)\to 0$, and $\frac{r(x_i)}{\rho'(x_i)}$ tends to zero, then we perform a metric scaling so that the distance from $x_i$ to the nodal line is normalised to be 1, and scale the function $u_i$ to have value 1 at $x_i$. Passing to a scaled limit, we get a nontrivial harmonic function on $\C^2/\Z_2\times \C$, with a power law bound of the type $O(r^\tau)$, which implies a contradiction similar to the previous case. \\

\item If $x_i$ stays in a bounded region of $X_0\times \C$, and $r(x_i)$ is uniformly bounded positively from below, then after passing to the limit, we obtain a nontrivial harmonic function $u$ in $C^{2,\alpha,ave}_{\delta, \tau  }(X_0\times \C)$, which must extend smoothly across the nodal line because it is locally integrable.
We note that the fibrewise average of $u$ is zero. Then we can consider the fibrewise $L^2$ integral 
\[
g(\zeta)= \int_{ X_0\times \{\zeta\}} |u|^2, 
\]
satisfying $\Lap_{\C} g \geq m'^2 g$ for  $m'>0$
as in Lemma \ref{HolderXytimesC}, and compare it to the function $C+ \epsilon e^{\epsilon |\zeta| }$ in the region $\{|\zeta|\geq 1\}$. As $\zeta\to \infty$ the harmonic function $u$ is controlled by a power law $O(|\zeta|^{\delta-\tau })$, so must be dominated by $C+ \epsilon e^{\epsilon |\zeta| } $. By comparison principle $g\leq C+ \epsilon e^{\epsilon |\zeta| }$, and taking the limit $\epsilon\to 0$ shows that $g$ is bounded, hence $u$ is bounded, so we can apply Lemma \ref{HolderXytimesC} to conclude $u=0$, contradiction. \\

\item If $r(x_i)$ stays bounded below, but $\rho'(x_i)\to \infty$, then we normalise $u_i$ to have value 1 at $x_i$. Passing to the scaled limit we get a nontrivial harmonic function $u$ on $X_0\times \C$ with fibrewise average zero, and there is a bound of type $u=O(r^{\tau})$. Then $u$ must extend smoothly across the nodal line, and a similar argument as before shows $u=0$, contradiction.  
\end{itemize}

In particular $\Lap: C^{2, \alpha, ave}_{\delta,\tau}(X_0\times \C) \to C^{0, \alpha,ave}_{\delta-2,\tau-2}(X_0\times \C)$ is injective. For the surjectivity claim, we need to solve $\Lap u=f$ for a given $f\in C^{0, \alpha,ave}_{\delta-2,\tau-2}(X_0\times \C)$. For this we can use a sequence of functions $f_i \in C^{0, \alpha,ave}(X_0\times \C)$ which weakly converge to $f$ on compact subsets of the complement of the nodal line, and are uniformly bounded in $C^{0, \alpha,ave}_{\delta-2,\tau-2}$; such a sequence can be produced using cutoff functions and integration on fibres. Using Lemma \ref{HolderXytimesC} we find $u_i\in C^{2, \alpha}$ solving $\Lap u_i= f_i$. Notice for our range of weights $C^{2, \alpha}\subset C^{2, \alpha}_{\delta, \tau}$. By the coercivity estimate $u_i$ is a bounded sequence in $C^{2,\alpha,ave}_{\delta, \tau}$, from which we can extract a weak limit $u$, giving the desired solution to $\Lap u=f$.

The exponential decay argument is as in Lemma \ref{HolderXytimesC}.
\end{proof}

\begin{rmk}
If $\delta>0$, then the surjectivity statement must fail. If a smooth function $u\in C^{2, \alpha}_{\delta, \tau}(X_0\times \C)$, then the positive weight forces $u$ to vanish at the origin. 
But for a smooth forcing function $f$, in general we cannot expect to solve $\Lap u=f$ with fibrewise average zero and the vanishing condition at the origin.
For the rescue, it seems necessary to enlarge $C^{2,\alpha}_{\delta, \tau}$ by allowing for an extra smooth function locally constant near the origin.
\end{rmk}

\begin{rmk}
There are two reasons why we did not consider the function spaces $C^{k, \alpha}_{\delta, \tau }(X_0\times \C)$ for larger $k$. The first is that we do not have higher regularity estimates for the metric ansatz $\omega_t$ (\cf section \ref{Regularisingthesemiflatmetric}). The second is that the higher order derivatives $\nabla^k u= O(\rho'^{\delta-k}w^{\tau-k})$
can fail to be $L^1$ integrable along fibres when $k$ is large, which can also cause the disagreement between classical and distributional derivatives.
\end{rmk}

\subsection{Decomposition, patching and parametrix}\label{Decompositionpatchingparametrix}

Let $f$ be a function in the weighted space $C^{0,\alpha}_{\delta-2, \tau-2,t}(X)$ as in section \ref{WeightedHolderspacesonX}. Our aim is to find an approximate solution to $\Lap_{\omega_t} u=f$ subject to the condition $\int_X f\omega_t^3=0$. This ultimately leads to a proof of Propostion \ref{Greenoperator}.  For simplicity of presentation, we pretend there is only one nodal fibre in $X$, although more nodal fibres present no further difficulty. We shall assume thoughout this section that $-2+\alpha<\tau<0$ and $-2+\alpha<\delta<0$. Furthermore let $\delta$ avoid the discrete set of values appearing in Proposition \ref{HolderC3} (which is actually automatically true for this range of weights.) 

\begin{rmk}
In this section, we regard the parameters $\delta, \tau, \alpha,\epsilon_1, \epsilon_2, \Lambda_1$ as fixed once for all, and all constants are allowed to depend on them; we will introduce some additional parameters $\epsilon_3, \Lambda_2, \Lambda_3$ to be fixed in due course, and the dependence of various estimates on these parameters will be explicitly tracked down. It is also our standing assumption that $t$ is sufficiently small with respect to the choices of all other parameters.	
\end{rmk}

We will perform decomposition on $f$ both spatially and spectrally, so that on each constituent of $f$ the harmonic analysis becomes simpler.

Recall we have a standard cutoff function
\[
\gamma_1(s)= \begin{cases}
1 \quad \text{if } s>2, \\
0 \quad \text{if } s<1.
\end{cases}
\]
and $\gamma_2=1-\gamma_1$.
Let $\Lambda_2\gg1$  be a large number to be fixed independent of $t$. On the region $\{|y|< \epsilon_1\}$, we build cutoff functions
\[
\eta_1= \gamma_1( \frac{ r\Lambda_2}{ t^{1/10}+ t^{1/12} \rho'^{1/6}   }   ), \quad \eta_2= 1-\eta_1.
\]
These will be used to decompose the function near the singular fibre into regions modelled by $\C^3$ and  $X_0\times \C$.
We also need a partition of unity $\{\chi_i'\}_{i=0}^{N'}$ on the base $Y$. Here $\chi_0'$ is equal to 1 for $|y|\leq \epsilon_3/2$ and vanishes for $|y|> 2\epsilon_3/3$, where $0<\epsilon_3\leq \epsilon_1$ is some small number to be fixed independent of $t$, satisfying the constraints listed in section \ref{Metricdeviation}. For $i=1,2,\dots ,N'$, the functions $\chi_i'$ have supports contained in  $\{|y|\geq \epsilon_3/2\}$, each having
 length scale $\sim t^{1/2}$ in the $\omega_Y$ metric, and containing a point $y_i'$ which we think of as the centre of that support. We can demand $0\leq \chi_i'\leq 1$, and all these $\chi_i'$ have uniform $C^k$ bounds with respect to the metric $\frac{1}{t}\omega_Y$ for any given positive $k$. Morever, at each point in $Y$ there are only a bounded number of nonvanishing $\chi_i'$, even though $N'\sim O(\frac{1}{t})$.

We now decompose $f$ into pieces.

\begin{itemize}
\item
For $i=1, \ldots, N'$, we use the trivialisation $G_{y_i'}$ to identify $\chi_i' f$ as a compactly supported function $(G_{y_i'}^{-1})^*(\chi_i' f)$ on $X_{ y_i'}\times \C$, and use integration on fibre to decompose it into the sum of a function $f_i$ with fibrewise average zero in $X_{y_i'}\times \C$, and a function $f_i'$ on the base:
\[
  \chi_i' f= f_i+ f_i'.
\]
Clearly the supports of $f_i$ and $f_i'$ are contained in the support of $\chi_i'$. The $C^{0,\alpha}_{\delta-2, \tau-2, t  }$ norms of these functions $f_i$ and $f_i'$ are bounded in terms of the norm on $f$, because integration on fibre is a bounded operator for $-2+\alpha<\delta<0$ and $-2+\alpha<\tau<0$.  The reader is encouraged to think of functions on the base as the zeroth Fourier mode and fibrewise average zero functions as the higher Fourier modes.	\\

\item 
For $i=0$, we use the trivialisation $G_0$ to identify $\chi_0' \eta_1 f$ as a compactly supported function $(G_0^{-1})^*(\chi_0' \eta_1 f)$ on $X_0\times \C$. We calculate the integration on the fibres of $X_0\times \C$ to obtain a locally defined function on $\C$, given by
\[
f_0'(y)=\frac{ \int_{X_0\times \{y\}} (G_0^{-1})^*( \eta_1 f) \omega_{SRF}|_{X_0}^2}
{ \int_{X_0\times \{y\} } \eta_1 \omega_{SRF}|_{X_0}^2    }
\]
Here we notice that $\eta_1$ is equal to 1 on most of the measure of the fibres, so the denominator is approximately 1.
The function $\chi_0' f_0'$ is well defined on the support of $\chi_0'$, and has a weighted H\"older bound $\norm{ \chi_0' f_0'}_{C^{0,\alpha}_{\delta-2, \tau-2, t} } \leq C \norm{ f}_{C^{0,\alpha}_{\delta-2, \tau-2,t }(X)  }$, when $-2+\alpha<\delta<0$ and $-2+\alpha<\tau<0$. (This follows from Lemma \ref{HolderXytimesCpushforward} but does not manifest its full strength. A refined statement is Lemma \ref{fYweightedHolderaverageproperty}.)

 Then we write
\[
f_{0,1}= \chi_0'\eta_1(  f-  {f_0'(y) } ), \quad f_{0,2}= \chi_0' \eta_2 (f-  {f_0'(y) } ).
\]
By construction $f_{0,1}$ and $f_{0,2}$ are supported where the cutoff functions are supported, their $C^{0,\alpha}_{\delta-2, \tau-2,t}$ norms are bounded in terms of the same norm on $f$ up to a bounded factor,
and $f_{0,1}$ has fibrewise average zero. This gives a decomposition
\[
\chi_0' f= f_{0,1}+ f_{0,2}+ \chi_0' f_0'.
\]
\end{itemize}

To summarize, we obtain $f_{0,1}, f_1, f_2, \ldots f_{N'}$ which have some fibrewise average zero property, a function $f_{0,2}$ which is supported near the nodal point, and a function on the base
$
f_Y=\chi_0' f_0'+ f_1' + \ldots + f_{N'}',
$
which has a weighted H\"older bound $  \norm{f_Y}_{C^{0,\alpha}_{\delta-2, \tau-2, t}(X) }\leq C  \norm{f}_{ C^{0, \alpha}_{\delta-2, \tau-2,t}(X)  }$ with constant independent of $t$, because even though we are summing over many terms, near any given fibre only a small number of them contribute. These give a decomposition of $f$:
\begin{equation}
f= f_{0,1}+ f_{0,2}+ f_1+ f_2 + \ldots + f_{N'} + f_Y.
\end{equation}
The strategy is then to use the harmonic analysis we developed on various model geometries to divide and conquer all these pieces, at least approximately.

We first deal with $f_1, f_2, \ldots, f_{N'}$, which are supported on the support of $\chi_i'$, so in particular are far away from the singular fibre. Via the trivialisations $G_{y_i'}$, we regard these as functions on $ X_{y_i'}\times \C$ with average zero, so we can apply the Green operator $P_{y_i'}$ provided by Lemma \ref{HolderXytimesC} to  solve the Poisson equation on $X_{y_i'}\times \C $. To put these local solutions back on $X$, we use the cutoff function $\tilde{\chi}_i'=\gamma_2( \frac{|y-y_i'|}{\Lambda_3 t^{1/2} }   )$, where $\Lambda_3$ is a large number to be fixed independent of $t$. We write 
\begin{equation}
P_1 f= \sum_{i=1}^{N'} \tilde{\chi}'_i P_{y_i'} f_i
\end{equation}
Notice the cutoff procedure makes the function well defined on $X$.

\begin{lem}\label{Greenawayfromsingularfibre}
Given  $\epsilon_3$, then we can choose $\Lambda_3$ sufficiently large, so that for sufficiently small $t$ with respect to all previous choices, 
\begin{equation*}
\norm{ \Lap_{\omega_t} P_1 f - \sum_{i=1}^{N'} f_i }_{ C^{0, \alpha}_{\delta-2, \tau-2, t}(X)  }\leq \frac{1}{100} \norm{f}_{C^{0, \alpha}_{\delta-2, \tau-2, t}(X)   } ,
\end{equation*}
and 
\[
\norm{  P_1 f  }_{ C^{2, \alpha}_{\delta, \tau, t}(X)  }\leq C(\epsilon_3, \alpha, \delta, \tau) \norm{f}_{C^{0, \alpha}_{\delta-2, \tau-2, t}(X)   } .
\]
\end{lem}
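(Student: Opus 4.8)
The plan is to exploit the exponential decay furnished by Lemma \ref{HolderXytimesC}, which localises the model Green operators $P_{y_i'}$ so tightly that patching them with the cutoffs $\tilde{\chi}_i'$ produces only exponentially small errors, and then to balance this against the metric-deviation error of Proposition \ref{metricdeviationerror}.

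First I would set up the local solutions with the correct weights. Fix $i$. Since $|y_i'|\geq \epsilon_3/2$ is bounded away from $S=\{0\}$, the fibre $X_{y_i'}$ has uniformly bounded geometry with constants depending only on $\epsilon_3$ (its vanishing neck has size $\gtrsim \epsilon_3^{1/4}$), so on the support of $f_i$ the radial weight $r^{\tau-2}$ is pinched below $C\epsilon_3^{(\tau-2)/4}$, while $\rho'\sim t^{-1/2}|y_i'|=:\rho_i$ is essentially constant because the support has $\zeta$-width $\sim 1\ll|\zeta_i'|$. By Remark \ref{weightedHoldernormcomputationtechnique} the ambient weighted norm near $X_{y_i'}$ is therefore comparable, up to the $t$-independent constant $C(\epsilon_3)$ and the overall constant weight $\rho_i^{\delta-\tau}$, to the unweighted product-metric $C^{k,\alpha}$ norm. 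Applying the bounded inverse $P_{y_i'}$ of Lemma \ref{HolderXytimesC} (whose constant is uniform for $|y_i'|$ bounded away from $S$), its $C^{2,\alpha}$ bound controls $\|P_{y_i'}f_i\|_{C^{2,\alpha}}$ by $\|f_i\|_{C^{0,\alpha}}$, and, since $f_i$ has fibrewise average zero and support in $\{|\zeta-\zeta_i'|\lesssim 1\}$, the solution together with its derivatives decays like $e^{-m(|\zeta-\zeta_i'|-B)}$ away from the support, with $B\sim 1$.

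Next I would carry out the error computation. Writing
\[
\Lap_{\omega_t}(\tilde{\chi}_i' P_{y_i'}f_i)- f_i= \tilde{\chi}_i'(\Lap_{\omega_t}-\Lap_{\mathrm{prod}})P_{y_i'}f_i + [\Lap_{\omega_t}, \tilde{\chi}_i']P_{y_i'}f_i,
\]
where I have used that $\tilde{\chi}_i'\equiv 1$ on $\{|\zeta-\zeta_i'|\lesssim \Lambda_3\}$, hence on the support of $f_i$, so that $\tilde{\chi}_i'\Lap_{\mathrm{prod}}P_{y_i'}f_i=\tilde{\chi}_i' f_i=f_i$. The first term is governed by the metric deviation: Proposition \ref{metricdeviationerror} (third bullet) bounds $\omega_t-G_{y_i'}^*(\omega_{SRF}|_{X_{y_i'}}+\tfrac{1}{t}A_{y_i'}\sqrt{-1}dy\wedge d\bar{y})$ by $C(\epsilon_3)\Lambda_3 t^{1/2}$, and since $\Lap_{\omega_t}-\Lap_{\mathrm{prod}}=2(\Tr_{\omega_t}-\Tr_{\mathrm{prod}})\partial\bar{\partial}$ involves no first-derivative terms, this contribution is $\lesssim C(\epsilon_3)\Lambda_3 t^{1/2}\|f\|$ in the weighted norm. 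The commutator term is supported where $\nabla\tilde{\chi}_i'\neq 0$, i.e. $|\zeta-\zeta_i'|\sim\Lambda_3$; there $|\nabla\tilde{\chi}_i'|\lesssim \Lambda_3^{-1}$ and $|\Lap\tilde{\chi}_i'|\lesssim \Lambda_3^{-2}$, while $P_{y_i'}f_i$ and its derivatives have decayed to $\lesssim e^{-m\Lambda_3}$, giving a contribution $\lesssim C(\epsilon_3)e^{-m\Lambda_3}\|f\|$ after converting back through the (roughly constant) weight $\rho_i^{\delta-\tau}$. Summing over $i$ I would use that although $O(\Lambda_3)$ of the $\tilde{\chi}_i'$ overlap at any point, the factor $e^{-m|\zeta-\zeta_i'|}$ makes $\sum_i$ a convergent geometric series bounded independently of $\Lambda_3$; the same exponential summation, together with the $C^{2,\alpha}$ bound on each $P_{y_i'}f_i$, yields the $C^{2,\alpha}_{\delta,\tau,t}$ estimate for $P_1 f$ with constant $C(\epsilon_3,\alpha,\delta,\tau)$. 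The Hölder seminorm parts follow routinely from the decay of the first and second derivatives and interior Schauder bounds, so I will treat them only briefly.

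Finally I would fix the constants in the correct order: choose $\Lambda_3$ large enough that the accumulated commutator error $C(\epsilon_3)e^{-m\Lambda_3}\|f\|$ falls below $\tfrac{1}{200}\|f\|$, and then, with $\Lambda_3$ frozen, take $t$ small enough that the metric-deviation error $C(\epsilon_3)\Lambda_3 t^{1/2}\|f\|$ falls below $\tfrac{1}{200}\|f\|$, yielding the $\tfrac{1}{100}$ bound. The main obstacle is precisely this two-stage balancing across scales: the patching error must be killed by the geometric (exponentially small in $\Lambda_3$) decay \emph{before} $t$ is sent to zero, since the metric-deviation constant degenerates as $\Lambda_3\to\infty$. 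The exponential decay of Lemma \ref{HolderXytimesC} is what simultaneously makes this ordering of choices possible and prevents the $O(\Lambda_3)$ overlapping patches from accumulating into a $t$-dependent operator norm.
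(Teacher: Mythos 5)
Your proposal is correct and follows essentially the same route as the paper: uniform bounds and exponential decay from Lemma \ref{HolderXytimesC}, splitting the error into the metric-deviation term (Proposition \ref{metricdeviationerror}, third bullet, of size $O(\Lambda_3 t^{1/2})$) and the cutoff/commutator term (of size $O(e^{-m\Lambda_3})$), summing over $i$ via bounded overlap, and then fixing $\Lambda_3$ large before sending $t\to 0$. The only nitpick is that the number of overlapping cutoffs $\tilde{\chi}_i'$ at a point is $O(\Lambda_3^2)$ (the base is two real dimensional), not $O(\Lambda_3)$, but your geometric-series summation absorbs this harmlessly, just as the paper's cruder bound $C\Lambda_3^2 e^{-m\Lambda_3}$ does.
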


\begin{proof}
In this proof we shall not keep track of dependence on $\delta, \tau, \alpha, \epsilon_3$.
By Lemma \ref{HolderXytimesC} there is a uniform estimate,
\[
\norm{ P_{y_i'} f_i }_{C^{2, \alpha}(X_{y_i'} \times \C)  } \leq C \norm{  f_i }_{C^{0, \alpha}(X_{y_i'}\times \C)  },
\]
and a uniform exponential decay estimate with rate $m$ on $|P_{y_i'} f_i|$ and the higher derivatives, for $|y-y_i'|\gtrsim t^{1/2}$, namely outside the support of $\chi_i'$.

We examine the norm of $\Lap_{\omega_t} P_{y_i'} f_i- f_i$. The error comes from two sources: the cutoff at scale $|y-y_i'|\sim \Lambda_3 t^{1/2}$, and the deviation of $\omega_t$ from the product metric on $X_{y_i'}\times \C$. Because of the exponential decay, the cutoff error is of order $O(\exp{ (-m \Lambda_3 ) } \norm{f_i}_{C^{0,\alpha}}  )$. The metric deviation error is of order $O(t^{1/2}\Lambda_3 \norm{P_{y_i'} f}_{C^{2,\alpha}} ) $ using Proposition \ref{metricdeviationerror}. For $t$ sufficiently small with respect to all previous choices, the errors suppressed by a power of $t$ can be ignored, so
\[
\norm{ \Lap_{\omega_t} P_{y_i'} f_i -f_i }_{ C^{0,\alpha} (X_{y_i'} \times \C) } \leq C \exp{ (-m \Lambda_3 ) } \norm{f_i}_{ C^{0,\alpha}( X_{y_i'} \times \C)   }.
\]  
The norm on $C^{0, \alpha}(X_{y_i'}\times \C)$ differs from the $C^{0,\alpha}_{ \delta-2, \tau-2, t  }(X)$ norm by a factor of order $t^{\frac{1}{2}(\delta-\tau) }$ (\cf section \ref{WeightedHolderspacesonX}).  The effects of the weight factors cancel out to give
\[
\norm{ \Lap_{\omega_t} P_{y_i'} f_i -f_i }_{ C^{0,\alpha}_{\delta-2, \tau-2, t} (X) } \leq C e^{-m\Lambda_3} \norm{f_i}_{ C^{0,\alpha}_{\delta-2, \tau-2, t} ( X)   } \leq C e^{-m\Lambda_3} \norm{f}_{ C^{0,\alpha}_{\delta-2, \tau-2, t} ( X)    }.
\] 
Now we want to sum the contributions over $i$.
Our cutoff procedure ensures that each forcing term $f_i$ can only influence the points with $|y-y_i'|\lesssim \Lambda_3 t^{1/2}$, so each point $y$ only receives contributions from $O(\Lambda_3^2 )$ terms. Thus
\[
\norm{P_1 f -\sum_{i=1}^{N'} f_i }_{ C^{0,\alpha}_{\delta-2, \tau-2, t} (X) } \leq C \Lambda_3^2 e^{-m\Lambda_3} \norm{f}_{ C^{0,\alpha}_{\delta-2, \tau-2, t} ( X)    }.
\]
When $\Lambda_3$ is chosen to be sufficiently large, we can make this coefficient arbitrarily small. Once we fix this choice, the claims of the Lemma are clear.
\end{proof}

\begin{rmk}
The intuition is that by the exponential decay property, the forcing functions with fibrewise average zero have localised effect for the Poisson equation. They almost do not interact with the rest of the manifold, and that explains why their effects do not accumulate.  
\end{rmk}

Next, we deal with $f_{0,1}$, which is supported on the support of $\chi_0' \eta_1$ and thus can be regarded as a function on $X_0\times \C$, and has some fibrewise average zero property. We prepare some cutoff function $\tilde{\chi}_0'$  on $Y$, supported in $\{ |y|< \epsilon_3 \}$, is equal to 1 on $\{ |y|\leq \frac{3}{4} \epsilon_3  \} $, and \[|\nabla^k_{ \frac{1}{t} \omega_Y } \tilde{\chi}_0'|\leq C(k) (t^{1/2} \epsilon_3^{-1} )^k.\]
We also need a logarithmic cutoff function
\[
\tilde{\eta}_1= \gamma_1 ( \frac{ \log( \frac{r\Lambda_2^3}{ t^{1/10}+ t^{1/12 }\rho'{1/6 } }  ) }{ \log \Lambda_2  } ),
\]
which equals 1 on the support of $f_{0,1}$, and
whose gradient is supported in the range $\frac{1}{\Lambda_2^2} \leq \frac{ r}{ t^{1/10}+ t^{1/12 }\rho'{1/6 } } \leq \frac{1}{\Lambda_2}$, involving $O(\log \Lambda_2)$ dyadic scales. Its key property is that $\norm{ d \tilde{ \eta}_1 }_{ C^{k,\alpha}_ {-1, -1, t } }  \leq  \frac{C}{ \log \Lambda_2}  $, namely that we can gain an extra factor of order $O( \frac{1}{ \log \Lambda_2})$ compared to ordinary cutoff functions. Then we set
\begin{equation}
P_{0,1 } f= \tilde{\eta}_1 P_{y=0} f_{0,1} ,
\end{equation}
where we recall from Lemma \ref{HolderXytimesCweighted2} that $P_{y=0}$ is the Green operator on $X_0\times \C$ for the product metric.

Similarly, we deal with $f_{0,2}$, which is supported on the support of $\chi_0' \eta_2$, so can be regarded as a function on $\C^3$. We need an extra logarithmic cutoff function 
\[
\tilde{\eta}_2= \gamma_2 ( \frac{ 2\log( \frac{r\Lambda_2^{3/2}}{ 2 (t^{1/10}+ t^{1/12 }\rho'{1/6 }) }  ) }{ \log \Lambda_2  } ),
\]
which equals 1 on the support of $f_{0,2}$, and whose gradient is supported in the range $\frac{2}{\Lambda_2} \leq \frac{r}{  t^{1/10}+ t^{1/12 }\rho'{1/6 } }\leq \frac{2}{\Lambda_2^{1/2}}\ll 1 $. Its key property is that
$\norm{ d \tilde{ \eta}_2 }_{ C^{k,\alpha}_ {-1, -1, t } }  \leq  \frac{C}{ \log \Lambda_2}  $. Then we set
\begin{equation}
P_{0,2} f= (\frac{t}{2A_0})^{1/3} \tilde{\eta}_2 P_{\C^3 } f_{0,2},
\end{equation}
where we recall from Proposition \ref{HolderC3} that $P_{\C^3}$ is the Green operator on $(\C^3, \omega_{\C^3})$. The scaling factor is inserted to account for the relation between $\omega_t$ and $\omega_{\C^3}$, so that $\Lap_{\omega_t} P_{0,2} f$ is approximately $f_{0,2}$.

\begin{lem}\label{Greennearsingularfibre}
We can choose $\Lambda_2\gg1$,  $\epsilon_3\ll1$  subject to the constraints in section \ref{Metricdeviation}, such that for $t$ sufficiently small with respect to all previous choices, 
\[
\begin{split}
&\norm{ \Lap_{\omega_t} P_{0,1} f - f_{0,1} }_{ C^{0, \alpha}_{\delta-2, \tau-2, t}(X)  }\leq \frac{1}{100} \norm{f}_{C^{0, \alpha}_{\delta-2, \tau-2, t}(X)   } , \\
&\norm{  P_{0,1} f  }_{ C^{2, \alpha}_{\delta, \tau, t}(X)  }\leq C(\delta, \tau, \alpha)\norm{f}_{C^{0, \alpha}_{\delta-2, \tau-2, t}(X)   },
\end{split}
\]
and morever
\[
\begin{split}
&\norm{ \Lap_{\omega_t} P_{0,2} f - f_{0,2} }_{ C^{0, \alpha}_{\delta-2, \tau-2, t}(X)  }\leq \frac{1}{100} \norm{f}_{C^{0, \alpha}_{\delta-2, \tau-2, t}(X)   } , \\
&\norm{  P_{0,2} f  }_{ C^{2, \alpha}_{\delta, \tau, t}(X)  }\leq C(\delta, \tau, \alpha)\norm{f}_{C^{0, \alpha}_{\delta-2, \tau-2, t}(X)   }.
\end{split}
\]
\end{lem}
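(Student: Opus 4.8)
The plan is to prove the four inequalities in two parallel halves, following the template of Lemma \ref{Greenawayfromsingularfibre}: on each piece I invert the Laplacian with the appropriate \emph{model} Green operator — $P_{y=0}$ on $X_0\times\C$ for $f_{0,1}$, and $P_{\C^3}$ on $(\C^3,\omega_{\C^3})$ for $f_{0,2}$ — then transplant the local solution to $X$ with the logarithmic cutoff ($\tilde\eta_1$, resp.\ $\tilde\eta_2$), and show the resulting discrepancy is small. In both cases the discrepancy has exactly two sources: (i) the deviation of $\omega_t$ from the relevant model metric, and (ii) the commutator of $\Lap_{\omega_t}$ with the grafting cutoff. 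Source (i) will be controlled by Proposition \ref{metricdeviationerror}, contributing a factor $\epsilon_3^{1/7}$, and source (ii) by the defining property $\norm{d\tilde\eta_i}_{C^{k,\alpha}_{-1,-1,t}}\leq C/\log\Lambda_2$ of the logarithmic cutoffs. The total error will be $\leq C(\epsilon_3^{1/7}+1/\log\Lambda_2)\norm{f}$, so I first fix $\Lambda_2$ large to kill the commutator term, then $\epsilon_3$ small (compatibly with the constraints of Section \ref{Metricdeviation}, in particular $\epsilon_3\ll(\Lambda_1\Lambda_2^2)^{-12}$), and finally send $t\to0$ to discard every remaining term carrying a positive power of $t$.

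For $P_{0,1}f=\tilde\eta_1P_{y=0}f_{0,1}$, set $u=P_{y=0}f_{0,1}$; by Proposition \ref{HolderXytimesCweighted2} it satisfies $\Lap_{\mathrm{prod}}u=f_{0,1}$, where $\Lap_{\mathrm{prod}}$ is the Laplacian of the product metric $G_0^*(\omega_{SRF}|_{X_0}+\tfrac1tA_0\sqrt{-1}dy\wedge d\bar y)$, with $\norm{u}_{C^{2,\alpha}_{\delta,\tau}(X_0\times\C)}\leq C\norm{f_{0,1}}\leq C\norm f$. Since $\tilde\eta_1\equiv1$ on $\mathrm{supp}\,f_{0,1}$, the Leibniz rule gives the clean identity
\[
\Lap_{\omega_t}P_{0,1}f-f_{0,1}=\tilde\eta_1(\Lap_{\omega_t}-\Lap_{\mathrm{prod}})u+2\langle d\tilde\eta_1,du\rangle_{\omega_t}+u\,\Lap_{\omega_t}\tilde\eta_1.
\]
The first term is bounded in $C^{0,\alpha}_{\delta-2,\tau-2,t}$ by $C\norm{\omega_t-\omega_{\mathrm{prod}}}_{C^{0,\alpha}_{0,0,t}}\norm{u}_{C^{2,\alpha}_{\delta,\tau,t}}\leq C\epsilon_3^{1/7}\norm f$ using the second bullet of Proposition \ref{metricdeviationerror} on $U_2$, which (by the constraint on $\epsilon_3$) contains $\mathrm{supp}\,\tilde\eta_1$; the two commutator terms have weight $(\delta-2,\tau-2)$ and are bounded by $C\norm{d\tilde\eta_1}_{C^{1,\alpha}_{-1,-1,t}}\norm{u}_{C^{2,\alpha}_{\delta,\tau,t}}\leq(C/\log\Lambda_2)\norm f$. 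The grafting is harmless at large $|\zeta|$: because $f_{0,1}$ has fibrewise average zero, the exponential decay of Proposition \ref{HolderXytimesCweighted2} makes $u$ negligible wherever $\tilde\eta_1$ transitions at large $|y|$, so no extra error arises there. The bound $\norm{P_{0,1}f}_{C^{2,\alpha}_{\delta,\tau,t}}\leq C\norm f$ is immediate since $\tilde\eta_1$ is bounded in $C^{2,\alpha}_{0,0,t}$.

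For $P_{0,2}f=\lambda\,\tilde\eta_2P_{\C^3}f_{0,2}$ with $\lambda=(\tfrac{t}{2A_0})^{1/3}$, the reasoning is identical once scaling is accounted for. Viewing $f_{0,2}$ as a compactly supported function on $(\C^3,\omega_{\C^3})$ via $F_t$, Proposition \ref{HolderC3} applies (since $-2<\delta<0$ exceeds $-4$ and avoids the indicial set, the latter being automatic here) and yields $\Lap_{\omega_{\C^3}}P_{\C^3}f_{0,2}=f_{0,2}$ with a bounded $C^{2,\alpha}_{\delta,\tau}(\C^3)$ estimate. Using $\Lap_{\lambda\omega_{\C^3}}(\lambda v)=\Lap_{\omega_{\C^3}}v$, the same Leibniz computation expresses $\Lap_{\omega_t}P_{0,2}f-f_{0,2}$ as a metric-deviation term plus commutator terms, controlled respectively by the first bullet of Proposition \ref{metricdeviationerror} (factor $\epsilon_3^{1/7}$) and by $\norm{d\tilde\eta_2}_{C^{k,\alpha}_{-1,-1,t}}\leq C/\log\Lambda_2$; being a function compared against a function, the $t$-powers cancel automatically and the error reduces to $C\epsilon_3^{1/7}\norm{f_{0,2}}\leq C\epsilon_3^{1/7}\norm f$. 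The one genuinely new point is the $t$-power matching in the solution bound: on $U_1$ the function norms at weights $(\delta,\tau)$ and $(\delta-2,\tau-2)$ carry prefactors $t^{-\delta/6}$ and $t^{-(\delta-2)/6}$ (\cf Section \ref{WeightedHolderspacesonX}), whose ratio $t^{-1/3}$ exactly cancels $\lambda=O(t^{1/3})$, so that $\norm{P_{0,2}f}_{C^{2,\alpha}_{\delta,\tau,t}}\leq C\norm f$ with a $t$-independent constant.

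The difficulty here is organisational rather than conceptual. The three things to verify are: (a) that $\mathrm{supp}\,d\tilde\eta_1\subset U_2$ and $\mathrm{supp}\,d\tilde\eta_2\subset U_1$, so that the respective model descriptions and Proposition \ref{metricdeviationerror} genuinely hold on the annuli where the cutoff error is created — this is exactly the role of the constraint $\epsilon_3\ll(\Lambda_1\Lambda_2^2)^{-12}$; (b) that the weighted value of $d\tilde\eta_i$ is indeed $O(1/\log\Lambda_2)$, the whole purpose of spreading the cutoff over $O(\log\Lambda_2)$ dyadic annuli rather than one; and (c) the strict order of quantifiers $\Lambda_2\to\epsilon_3\to t$, since $\epsilon_3$ is tied to $\Lambda_2$ through the Section \ref{Metricdeviation} constraints. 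Granting these, both estimates follow by collecting the bounds above and choosing $\Lambda_2$ large and $\epsilon_3$ small so that $C(\epsilon_3^{1/7}+1/\log\Lambda_2)<\tfrac1{100}$.
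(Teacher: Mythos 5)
Your proposal is correct and follows essentially the same route as the paper's proof: split the discrepancy into the metric-deviation term (controlled via Proposition \ref{metricdeviationerror} by $\epsilon_3^{1/7}$ on the regions where the supports of $\tilde\eta_1$, $\tilde\eta_2$ force the respective model descriptions to hold) and the cutoff commutator term (controlled by the $O(1/\log\Lambda_2)$ property of the logarithmic cutoffs), with the base-direction cutoff error killed by the exponential decay coming from the fibrewise-average-zero property, and then fix $\Lambda_2$, $\epsilon_3$, $t$ in that order. Your explicit Leibniz identity and the verification that the $t^{-\delta/6}$ versus $t^{-(\delta-2)/6}$ prefactors cancel the scaling factor $(\tfrac{t}{2A_0})^{1/3}$ in the $P_{0,2}$ bound simply spell out what the paper leaves as "insert some scaling factors such as $t^{\frac{1}{6}(\delta-2)}$", so no gap remains.
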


\begin{proof}
The norm  $ \norm{\Lap_{\omega_t} P_{0,1} f- f_{0,1} }_{  C^{0, \alpha}_{\delta-2, \tau-2}(X_0\times \C)   } $  is caused by the cutoff error and the  deviation of $\omega_t$ from the product metric on $X_0\times \C$. The cutoff error caused by $\tilde{\chi}_0'$ is suppressed by a power of $t$, hence negligible. The cutoff error caused by $\tilde{\eta}_1$ is of order $O( \frac{1}{\log \Lambda_2} \norm{ P_{y=0} f_{0,1}}_{ C^{2, \alpha}_{\delta, \tau}(X_0\times \C)   } )$. The metric deviation error is of order $O( \epsilon_3^{1/7} \norm{ P_{y=0} f_{0,1}}_{ C^{2, \alpha}_{\delta, \tau}(X_0\times \C)   }  )$ using Proposition \ref{metricdeviationerror}, provided the constraints in section \ref{Metricdeviation} are satisfied. We also observe that, because our cutoff scale takes place far above the quantisation scale when $t$ is sufficiently small, the $C^{0, \alpha}_{\delta-2, \tau-2}(X_0\times \C)$ norm and the $C^{0, \alpha}_{\delta-2, \tau-2,t}(X)$ norm are equivalent for the functions under consideration. Combining these discussions,
\[
\begin{split}
\norm{\Lap_{\omega_t} P_{0,1} f- f_{0,1} }_{  C^{0, \alpha}_{\delta-2, \tau-2, t}(X)   }&\leq C( \frac{1}{\log \Lambda_2} +\epsilon_3^{1/7} ) \norm{ P_{y=0} f_{0,1}}_{ C^{2, \alpha}_{\delta, \tau}(X_0\times \C)   }   \\
&\leq C( \frac{1}{\log \Lambda_2} +\epsilon_3^{1/7} ) \norm{  f_{0,1}}_{ C^{0, \alpha}_{\delta-2, \tau-2}(X_0\times \C)   }  \\
& \leq C( \frac{1}{\log \Lambda_2} +\epsilon_3^{1/7} ) \norm{  f_{0,1}}_{ C^{0, \alpha}_{\delta-2, \tau-2, t}(X)   } \\
& \leq C( \frac{1}{\log \Lambda_2} +\epsilon_3^{1/7} ) \norm{  f}_{ C^{0, \alpha}_{\delta-2, \tau-2, t}(X)   } .
\end{split}
\]
Now setting $\Lambda_2\gg1$ and $\epsilon_3\ll1$ subject to the constraints in section \ref{Metricdeviation}, we can make the coefficient arbitrarily small. The first couple of claims follow.

The claims about $P_{0,2} f$ are almost completely analogous, except that we need to insert some scaling factors such as $t^{\frac{1}{6}(\delta-2)}$ which account for the difference between the $C^{0, \alpha}_{ \delta-2, \tau-2, t }(X)$ norm and the $C^{0, \alpha}_{\delta-2, \tau-2}(\C^3, \omega_{\C^3})$ norm.
\end{proof}

Now we consider the function $f_Y$  on the base $Y$. We will evantually reduce the question of inverting the Laplacian on $f_Y$ to a question essentially on the base. We first summarize the information about $f_Y$.

\begin{lem}\label{fYweightedHolderaverageproperty}
The function $f_Y$ satisfies the bound
\[
|f_Y(y)| \leq C\norm{f}_{ C^{0,\alpha}_{\delta-2, \tau-2, t}(X) } 
\begin{cases}
 (1+ t^{-1/2}|y|  )^{\delta-\tau}, & |y|< \epsilon_1, \\
 t^{ \frac{\tau-\delta}{2}   },
 & |y|\geq \epsilon_1,
 \end{cases}
\]
and for $|y-y'|\leq t^{1/2}$, there is a weighted H\"older bound
\[
|f_Y(y)- f_Y(y')|  \leq C\norm{f}_{ C^{0,\alpha}_{\delta-2, \tau-2, t}(X) } ( t^{-1/2} |y-y'|   )^\alpha
\begin{cases}
(1+ t^{-1/2}|y|  )^{\delta-\tau}, & |y|< \epsilon_1, \\
t^{ \frac{\tau-\delta}{2}   },
& |y|\geq \epsilon_1.
\end{cases}
\]
Morever, the average values
\[
\bar{f}= \frac{ \int_X f \omega_t^3 }{ \int_X \omega_t^3  }, \quad 
\overline{f_Y}= \frac{ \int_Y f \tilde{\omega}_Y }{ \int_Y \tilde{\omega}_Y  }=\int_Y f \tilde{\omega}_Y
\]
differ by only a `small amount':
\[
|\bar{f}- \overline{f_Y}| \leq C t^{\frac{1}{2}(\tau-\delta)  } \epsilon_3^{ \delta-\tau+2 } \norm{f}_{ C^{0,\alpha}_{\delta-2, \tau-2, t}(X)   }.
\]
In particular if $\bar{f}=0$ then $|\overline{f_Y}| \leq C t^{\frac{1}{2}(\tau-\delta)  } \epsilon_3^{ \delta-\tau+2 } \norm{f}_{ C^{0,\alpha}_{\delta-2, \tau-2, t}(X)   }.$ Here constants only depend on $\delta, \tau, \alpha$, and $t$ is assumed to be sufficiently small as usual.
\end{lem}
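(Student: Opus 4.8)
For the first two assertions the plan is to exploit that $f_Y$ is assembled from fibrewise averages, so that the pushforward estimate of Lemma \ref{HolderXytimesCpushforward} applies termwise; throughout I write $\norm{f}$ for the $C^{0,\alpha}_{\delta-2,\tau-2,t}(X)$ norm. Recall $f_Y=\chi_0'f_0'+\sum_{i\geq1}f_i'$, where each $f_i'$ is the fibrewise mean of $\chi_i'f$ and $f_0'$ is the (essentially normalised) fibrewise mean of $\eta_1 f$ on $X_0\times\C$. I would introduce the rescaled base coordinate $\zeta=(\tfrac{t}{2A_0})^{-1/2}y$, so that $|\zeta|\sim t^{-1/2}|y|$ and the weighted norm on $U_2\simeq G_0(U_2)$ becomes exactly $C^{0,\alpha}_{\delta-2,\tau-2}(X_0\times\C)$ of Section \ref{WeightedHolderspacesonX}. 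Applying Lemma \ref{HolderXytimesCpushforward} in this picture gives $|f_0'|\leq C\norm{f}(1+|\zeta|)^{\delta-\tau}$ together with its companion Hölder bound in $\zeta$ at unit scale; reverting to $y$ turns the unit $\zeta$-scale into the $t^{1/2}$ scale in $y$ and produces the factors $(1+t^{-1/2}|y|)^{\delta-\tau}$ and $(t^{-1/2}|y-y'|)^\alpha$. For the pieces $f_i'$ with $i\geq1$ and for $|y|\geq\epsilon_1$, I would instead measure $f$ in the $U_3$ norm, where $|f|\leq Ct^{(\tau-\delta)/2}\norm{f}$ pointwise and the fibre has unit volume, obtaining the same bounds since only boundedly many $\chi_i'$ contribute at each $y$.

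For the comparison of averages I first record the identity $\sqrt{-1}\Omega\wedge\overline\Omega=\omega_{SRF}|_{X_y}^2\wedge\tilde\omega_Y$, immediate from $\Omega=dy\wedge\Omega_y$, $\Omega_y\wedge\overline\Omega_y=A_y\,\omega_{SRF}|_{X_y}^2$ (Proposition \ref{CYmetriconXy}) and $\tilde\omega_Y=A_y\sqrt{-1}dy\wedge d\bar y$. Since $[\omega_t]$ is fixed, $\int_X\omega_t^3=a_t$, so writing $\omega_t^3=a_t(1+f_t)\sqrt{-1}\Omega\wedge\overline\Omega$ and integrating along fibres first,
\begin{equation*}
\bar f=\frac{1}{a_t}\int_X f\,\omega_t^3=\int_Y\langle f\rangle_y\,\tilde\omega_Y+\int_X f\,f_t\,\sqrt{-1}\Omega\wedge\overline\Omega,\qquad \langle f\rangle_y:=\int_{X_y}f\,\omega_{SRF}|_{X_y}^2,
\end{equation*}
where $\int_{X_y}\omega_{SRF}|_{X_y}^2=1$ makes $\langle\cdot\rangle_y$ the genuine fibrewise mean. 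Subtracting $\overline{f_Y}=\int_Y f_Y\,\tilde\omega_Y$ leaves
\begin{equation*}
\bar f-\overline{f_Y}=\int_Y\bigl(\langle f\rangle_y-f_Y(y)\bigr)\tilde\omega_Y+\int_X f\,f_t\,\sqrt{-1}\Omega\wedge\overline\Omega,
\end{equation*}
and the task is to bound the two terms on the right.

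The plan for the first term is to show that $\langle f\rangle_y-f_Y$ is effectively supported in $\{|y|<\epsilon_3\}$ and to integrate the resulting bound in $\zeta$. For $|y|\geq\epsilon_3$ one has $\chi_0'=0$, so $f_Y=\sum_{i\geq1}f_i'$; since $\sum_i\chi_i'\equiv1$ and each $G_{y_i'}$ carries $X_{y_i'}$ onto $X_y$ over a base distance $|y-y_i'|\lesssim t^{1/2}$, Lemma \ref{Lipschitztypeestimatepotentialfunction} shows the pushforward measure differs from $\omega_{SRF}|_{X_y}^2$ by $O(t^{1/2})$, whence $\langle f\rangle_y-f_Y=O(t^{1/2}t^{(\tau-\delta)/2}\norm{f})$ in the bulk and contributes only $O(t^{1/2+(\tau-\delta)/2}\norm{f})$, negligible for small $t$. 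On $\{|y|<\epsilon_3\}$ both $\langle f\rangle_y$ and $f_Y$ obey the first assertion, so $|\langle f\rangle_y-f_Y|\leq C\norm{f}(1+t^{-1/2}|y|)^{\delta-\tau}$; passing to $\zeta$, where $\tilde\omega_Y\approx\tfrac{t}{2}\sqrt{-1}d\zeta\wedge d\bar\zeta$ and the domain is $\{|\zeta|\lesssim t^{-1/2}\epsilon_3\}$, and using $\delta-\tau+2>0$,
\begin{equation*}
\int_{|y|<\epsilon_3}|\langle f\rangle_y-f_Y|\,\tilde\omega_Y\leq C\norm{f}\,t\!\!\int_{|\zeta|\lesssim t^{-1/2}\epsilon_3}\!\!(1+|\zeta|)^{\delta-\tau}\leq C\norm{f}\,t\,(t^{-1/2}\epsilon_3)^{\delta-\tau+2}=C\norm{f}\,t^{(\tau-\delta)/2}\epsilon_3^{\delta-\tau+2},
\end{equation*}
the asserted order. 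The volume-form term is controlled by $\int_X|f||f_t|\sqrt{-1}\Omega\wedge\overline\Omega$; bounding $|f_t|$ by $Ct^{\delta'}$ via (\ref{omegaterror}) and $|f|$ by $\norm{f}$, the constraints $-2+\alpha<\delta,\tau<0$ make the product weight integrable against $\sqrt{-1}\Omega\wedge\overline\Omega$, and the two $t$-scalings combine — using $\delta'>\tfrac12(\delta-\tau)$, which holds since $\delta<\tfrac23+\tfrac{5\tau}{6}$ — to give something higher order in $t$ than the main term.

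The main obstacle is the first term. One must verify both that $\langle f\rangle_y-f_Y$ genuinely localises near the nodal fibre — that is, that the averaging scheme $\sum_i f_i'$ reproduces the true fibrewise mean away from $y=0$ up to $O(t^{1/2})$, for which the measure-distortion estimate of Lemma \ref{Lipschitztypeestimatepotentialfunction} is the crucial input — and that near $y=0$ the bounds on $\langle f\rangle_y$ and $f_0'$, which ignore the vanishing-cycle region and replace $X_y$ by $X_0$, remain governed by the pushforward estimate of Lemma \ref{HolderXytimesCpushforward}. The precise power $\epsilon_3^{\delta-\tau+2}$ emerges only after the rescaling to $\zeta$, and its correctness rests on the compatibility of the $U_2$ weight with the base measure $\tilde\omega_Y$.
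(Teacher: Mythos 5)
Your first two assertions and your treatment of $\int_Y(\langle f\rangle_y-f_Y)\,\tilde\omega_Y$ are correct and essentially reproduce the paper's own argument: the pointwise and H\"older bounds come from Lemma \ref{HolderXytimesCpushforward} in the rescaled coordinate, the bulk discrepancy is $O(t^{1/2})$ relative by Lemma \ref{Lipschitztypeestimatepotentialfunction}, and the region $\{|y|<\epsilon_3\}$ is handled by the crude single-weight pushforward bound integrated in $\zeta$, which is exactly where the paper gets $t^{\frac12(\tau-\delta)}\epsilon_3^{\delta-\tau+2}$. The genuine gap is your remaining term $\int_X f\,f_t\,\sqrt{-1}\Omega\wedge\overline{\Omega}$, which the paper's proof is deliberately structured never to form: the paper compares $\omega_t^3$ with $\frac3t\,\omega_{SRF}|_{X_y}^2\wedge\tilde\omega_Y$ only on $\{|y|>\epsilon_3\}$, where $f$ and the relative measure error are both bounded pointwise, and on $\{|y|\leq\epsilon_3\}$ it estimates $\int |f|\,\omega_t^3\,/\int\omega_t^3$ directly, so that only \emph{one} weighted function is ever integrated.

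Your estimate of this term does not stand as written. The bounds ``$|f_t|\leq Ct^{\delta'}$'' and ``$|f|\leq\norm{f}$'' are not valid pointwise, since the weighted norms control these functions only up to weight factors, and $w'^{\tau-2}$ blows up near the vanishing cycles; and the fallback claim that $-2+\alpha<\delta,\tau<0$ makes the \emph{product} weight integrable is false in the relevant fibrewise sense: the single weight contributes $\int r^{\tau-2}\,r^3\,dr$, finite precisely because $\tau>-2$ (this is the content of Lemma \ref{HolderXytimesCpushforward}), whereas the product contributes $\int r^{2\tau-4}\,r^3\,dr$, which diverges at the node for every $\tau\leq 0$. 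On $X$ this divergence is cut off only at $r\sim\max(\Lambda_1|y|^{1/4},t^{1/6})$, so the integral is finite but carries negative powers of $t$; a region-by-region computation (including the $t^{(\delta-2)/6}$ conversion factors on $U_1$ and the Jacobian of the Lefschetz fibration) shows the total is of order $\norm{f}\,\norm{f_t}_{C^{0,\alpha}_{\delta-2,\tau-2,t}}\,t^{\tau-\delta}$ — so your condition $\delta'>\frac12(\delta-\tau)$ is indeed what is needed, but this computation is the crux of the step and is entirely absent. More seriously, the input $\norm{f_t}_{C^{0,\alpha}_{\delta-2,\tau-2,t}}\leq Ct^{\delta'}$ exists only under the constraints of (\ref{omegaterror}), namely $\frac34\tau-\frac12<\delta<\frac23+\frac{5\tau}6$, which are \emph{not} implied by the lemma's hypotheses $-2+\alpha<\delta,\tau<0$ (take $\tau$ near $-2$ with $\delta$ near $-\frac12$, or $\tau$ near $0$ with $\delta$ near $-\frac32$); the lemma is stated, and is used in Proposition \ref{Greenoperator}, on the full range, where your approach has no control of $f_t$ at all. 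The repair is to avoid the product altogether: on $\{|y|\leq\epsilon_3\}$ use $1+f_t>0$ to bound $|f\,f_t|\,\sqrt{-1}\Omega\wedge\overline{\Omega}\leq |f|\,\omega_t^3/a_t+|f|\,\sqrt{-1}\Omega\wedge\overline{\Omega}$ and apply the single-weight pushforward bound to each piece, and in the bulk use the pointwise smallness of the measure discrepancy — but that is precisely the paper's argument, and the detour through $f_t$ then buys nothing.
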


\begin{proof}
The weighted H\"older control on $f_Y$ is immediate from its construction and Lemma \ref{HolderXytimesCpushforward}.

We explain why the average values are close to each other. On the set $\{|y|>\epsilon_3\}$ which has most of the $\omega_t^3$-measures, the measure $\omega_t^3$ is very close to $\frac{3}{t} \omega_{SRF}|_{X_y}^2 \tilde{\omega}_Y$ up to some relative error suppressed by a power of $t$, and $f_Y$ is essentially just the integration along fibres using the $\omega_{SRF}|_{X_y}^2$ measure, so
\[
\int f\omega_t^3\sim \int \frac{3}{t} \tilde{\omega}_Y \int_{X_y} f \omega_{SRF}|_{X_y}^2 \sim \int \frac{3}{t} f_Y \tilde{\omega}_Y , \quad \int \omega_t^3 \sim \int \frac{3}{t} \tilde{\omega}_Y,
\]
and cancelling factors gives $\bar{f}- \overline{f_Y}\sim 0$.
After neglecting the small error caused by the set $\{|y|>\epsilon_3  \}$, and making a very crude estimate for the error 
 caused by the set $\{|y|\leq \epsilon_3 \}$, we arrive at 
\[
|\bar{f}- \overline{f_Y}| \leq C \frac{  \int_{ |y|\leq \epsilon_3 } |f|\omega_t^3 }{ \int_X \omega_t^3   } \leq C \frac{  \int_{ |y|\leq \epsilon_3} ( 1+ t^{-1/2} |y|  )^{\delta-\tau} \tilde{\omega}_Y  } { \int_Y    \tilde{\omega}_Y  }\norm{f}_{ C^{0,\alpha}_{\delta-2, \tau-2, t}(X)   }
\]
where we have integrated along fibres and cancelled factors.
The RHS is estimated by $C t^{\frac{1}{2}(\tau-\delta)  } \epsilon_3^{ \delta-\tau+2 } \norm{f}_{ C^{0,\alpha}_{\delta-2, \tau-2, t}(X)   }$.
\end{proof}

A requirement for solving the Poisson equation is that the forcing term has zero average. Thus we would like to decompose $f_Y$ into an average zero function and a small error function. Let $\chi_0''$ be a cutoff function on $Y$ with support in $\{|y|> \epsilon_1  \}$, and equals 1 on a subset of $Y$ with
at least half of the $\omega_Y$-measure, and has 
 bounded higher derivatives with respect to $\frac{1}{t}\omega_Y$.
Let
\[
f_Y' = f_Y - { \overline{f_Y} } \frac{ \chi_0''} { \int_Y \chi_0'' {\tilde{\omega}_Y}    }, \quad f_Y''={ \overline{f_Y} } \frac{ \chi_0''} { \int_Y \chi_0'' {\tilde{\omega}_Y}    }.
\]
By construction $\int_Y f_Y'\tilde{\omega}_Y=0$, the function $f_Y'$ satisfies the same kind of weighted H\"older estimate as $f_Y$ described in Lemma \ref{fYweightedHolderaverageproperty}, and if $\bar{f}=0$, then
\[
\norm{ f_Y'' }_{ C^{0,\alpha}_{\delta-2, \tau-2, t} (X) } \leq C|\overline{f_Y}| t^{\frac{1}{2}(\delta-\tau)  } \leq C \epsilon_3^{\delta-\tau+2} \norm{f}_{ C^{0,\alpha}_{\delta-2, \tau-2, t} (X) }.
\]
We now define $P_Y f$ to be the unique solution to the following Poisson equation on $Y$,
\[
\Lap_{ \frac{1}{t} \tilde{\omega}_Y  } (P_Y f  )= f_Y', \quad \int_Y (P_Y f) \tilde{\omega}_Y=0,
\]
or equivalently,
\begin{equation}
\sqrt{-1} \partial \bar{\partial} ( P_Y f )= \frac{1}{2t} f_Y' \tilde{\omega}_Y,   \quad \int_Y (P_Y f) \tilde{\omega}_Y=0.
\end{equation}

\begin{lem}\label{GreenY}
Assume that the average value $\bar{f}=0$. If $\epsilon_3$ is chosen to be sufficiently small, then for sufficiently small $t$ with respect to all previous choices,
\[
\norm{ \Lap_{\omega_t} P_Y f- f_Y}_{ C^{0,\alpha}_{\delta-2, \tau-2,t}(X)  } \leq \frac{1}{100}\norm{ f}_{ C^{0,\alpha}_{\delta-2, \tau-2,t}(X)  } ,
\]
and
\[
\norm{ \partial \bar{\partial} (P_Y f ) }_{ C^{0,\alpha} _{\delta-2, \tau-2,t}(X)    } \leq C(\delta, \tau, \alpha) \norm{  f} _{ C^{0,\alpha}_{\delta-2, \tau-2,t}(X)    }.
\]
\end{lem}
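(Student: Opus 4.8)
The plan is to use that $P_Y f = \pi^* v$ is pulled back from the base, so that $\sqrt{-1}\partial\bar\partial(P_Y f)$ is a purely horizontal $(1,1)$-form whose $\omega_t$-trace sees only the horizontal component of the inverse metric. Introduce the pointwise scalar multiplier $m = \frac{A_y}{t}\,g^{y\bar y}_{\omega_t}$, the ratio of the $(y,\bar y)$ inverse-metric coefficient of $\omega_t$ to that of $\frac{1}{t}\tilde\omega_Y$. Since $P_Y f$ satisfies $\Lap_{\frac{1}{t}\tilde\omega_Y}(P_Y f) = f_Y'$ by definition, the chain rule gives at once
\[
\Lap_{\omega_t}(P_Y f) = m\, f_Y', \qquad \Lap_{\omega_t}(P_Y f) - f_Y = (m-1) f_Y' - f_Y''.
\]
The term $f_Y''$ is already harmless: since $\bar f = 0$, Lemma \ref{fYweightedHolderaverageproperty} gives $\norm{f_Y''}_{C^{0,\alpha}_{\delta-2,\tau-2,t}(X)} \leq C\epsilon_3^{\delta-\tau+2}\norm{f}$, which is below $\frac{1}{200}\norm{f}$ once $\epsilon_3$ is small, as $\delta-\tau+2 > 0$ for our weights. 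So the heart of the matter is to bound $(m-1)f_Y'$ in the weighted norm.

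I would estimate $m-1$ region by region via Proposition \ref{metricdeviationerror}. Away from the node and on $U_2$, the metric $\omega_t$ is $C^{0,\alpha}_{0,0,t}$-close to a local product $G_{y'}^*(\omega_{SRF}|_{X_{y'}} + \frac{1}{t}A_{y'}\sqrt{-1}dy\wedge d\bar y)$; for such a product the inverse metric is block diagonal with $g^{y\bar y} = t/A_{y'}$ exactly, so $m = A_y/A_{y'} + O(\epsilon_3^{1/7}) = 1 + O(\epsilon_3^{1/7})$ since $A_y$ is Lipschitz and $|y-y'|$ is small. The remaining piece is the near-cycle part of $U_1$, where Proposition \ref{metricdeviationerror} only gives $\omega_t \approx (\frac{t}{2A_0})^{1/3}\omega_{\C^3}$. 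There I would compute directly: writing $P_Y f$ as a function of $\tilde y = z_1^2+z_2^2+z_3^2$, one has $\sqrt{-1}\partial\bar\partial(P_Y f) = (P_Y f)_{\tilde y\bar{\tilde y}}\sqrt{-1}\partial\tilde y\wedge\bar\partial\bar{\tilde y}$, whence, tracking the scalings in (\ref{embedC3toX}),
\[
\Lap_{\omega_t}(P_Y f) = \frac{A_y}{2A_0}\,|\partial\tilde y|^2_{\omega_{\C^3}}\, f_Y', \qquad m = \frac{A_y}{2A_0}\,|\partial\tilde y|^2_{\omega_{\C^3}}.
\]
Because the asymptotic potential $\phi_\infty$ contributes $\frac{1}{2}|\tilde y|^2$ to the horizontal metric, which has $g^{\tilde y\bar{\tilde y}} = 2$, while the fibre term $\sqrt{R^4+\rho}$ and the deviation $\phi_{\C^3}'$ (controlled by (\ref{phiC3error})) contribute only lower-order, decaying corrections — consistent with convergence of $\omega_{\C^3}$ to its tangent cone $\C^2/\Z_2\times\C$, on whose flat $\C$-factor $\tilde y$ lives — one gets $|\partial\tilde y|^2_{\omega_{\C^3}} = 2 + O(\rho^{-\mu})$ for some $\mu>0$, and hence $m - 1 = O(\rho^{-\mu}) + O(|y|)$ outside the unit ball, with $m-1 = O(1)$ only on the quantisation region $\rho\sim 1$.

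It remains to convert these pointwise multiplier bounds into a weighted-norm estimate. On the product regions $f_Y'$ obeys the same weighted bound as $f$, so that part contributes $\leq C\epsilon_3^{1/7}\norm{f}$. On $U_1$ the decaying part $O(\rho^{-\mu})$ is harmless, while the $O(1)$ part of $m-1$ survives only where $\rho\sim 1$, i.e. $|\tilde y|\sim 1$, $|y|\sim t^{2/3}$; there the weight $\rho^{2-\delta}w^{2-\tau}$ is $O(1)$ and Lemma \ref{fYweightedHolderaverageproperty} gives $|f_Y'|\leq C\norm{f}$ (as $(1+t^{-1/2}|y|)^{\delta-\tau}\to 1$), so after multiplying by the conversion factor $t^{(2-\delta)/6}$ appearing in the $U_1$ piece of (\ref{weightedHoldernorm}) — a \emph{positive} power of $t$ precisely because $\delta<0$ — this contribution is $\leq C t^{(2-\delta)/6}\norm{f}$. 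Combining, $\norm{(m-1)f_Y'}_{C^{0,\alpha}_{\delta-2,\tau-2,t}(X)}\leq C(\epsilon_3^{1/7}+t^{(2-\delta)/6})\norm{f}$; choosing $\epsilon_3$ small, then $t$ small, pushes the total below $\frac{1}{100}\norm{f}$, and the Hölder refinement is routine given the methods of Lemmas \ref{omegaterror1} and \ref{omegaterror2}. The second claim is direct: $\sqrt{-1}\partial\bar\partial(P_Y f) = \frac{1}{2t}f_Y'\,\tilde\omega_Y$, and since $\frac{1}{t}\tilde\omega_Y$ is uniformly comparable to the horizontal part of $\omega_t$ it has $C^{0,\alpha}_{0,0,t}$-norm $O(1)$ — this is exactly where the merely Lipschitz regularity of $A_y$ forces us to stop at the $C^{0,\alpha}$ level — so $\norm{\partial\bar\partial(P_Y f)}_{C^{0,\alpha}_{\delta-2,\tau-2,t}(X)}\leq C\norm{f_Y'}\leq C\norm{f}$.

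The main obstacle is the near-node computation in $U_1$: one must verify that the effective horizontal Laplacian induced by $\omega_{\C^3}$ carries the correct leading coefficient matching the product value $t/A_0$ (equivalently $|\partial\tilde y|^2_{\omega_{\C^3}}\to 2$), and that the genuinely $O(1)$ discrepancy surviving at the quantisation scale is absorbed by the weight factor $t^{(2-\delta)/6}$. This is the one place where the sign condition $\delta<0$ is essential.
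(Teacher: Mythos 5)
Your core mechanism is exactly the paper's: you never estimate $P_Y f$ itself, only exploit the tautological identity that $\Lap_{\omega_t} P_Y f$ equals $f_Y'$ times a trace ratio. Your multiplier $m=\frac{A_y}{t}g^{y\bar y}_{\omega_t}$ is literally the paper's quantity $\frac{3\tilde{\omega}_Y\wedge\omega_t^2}{t\,\omega_t^3}$, the splitting $f_Y=f_Y'+f_Y''$ with $\norm{f_Y''}=O(\epsilon_3^{\delta-\tau+2}\norm{f})$ is identical, and the second claim is handled the same way. Where you genuinely diverge is the near-node region. The paper makes no attempt to show $m-1$ decays there: it uses only the coarse bound $\norm{m-1}_{C^{0,\alpha}_{0,0,t}}\leq C$ and wins because $\norm{f_Y}_{C^{0,\alpha}_{\delta-2,\tau-2,t}}$ \emph{restricted to that region} is already small, of order $\norm{f}\sup_r\{r^{2-\tau},r^{2-\delta}\}\leq \norm{f}\,t^{2/(14+\tau)}$ — the "regularising effect of fibre integration" from Lemma \ref{fYweightedHolderaverageproperty}, since $|f_Y|\lesssim \rho'^{\delta-\tau}$ while the weight tolerates $\rho'^{\delta-\tau}r^{\tau-2}$. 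You instead prove decay $m-1=O(\rho^{-\mu})$ from the asymptotics of $\omega_{\C^3}$ (the claim $|\partial\tilde y|^2_{\omega_{\C^3}}\to 2$, which is true and extractable from $\phi_\infty=\frac{1}{2}|\tilde y|^2+\sqrt{R^4+\rho}$ together with (\ref{phiC3error}), but is extra work the paper avoids), and use the weight only at the quantisation scale $\rho\sim 1$. Both routes close; the paper's is the more economical observation, yours buys a sharper description of where the error actually lives.

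Two soft spots you should repair. First, in the intermediate region $1\ll\rho$ of $U_1$ you dismiss the contribution as "harmless", but the weight $\rho^{2-\delta}$ grows without bound there (after conversion, $\rho'^{2-\delta}$ reaches $t^{-(2-\delta)/20}$ at the outer edge when $\tau=-\frac{2}{3}$), so the smallness is not automatic from $\rho^{-\mu}$ alone: one must also use the fibre-direction weight gain $w^{2-\tau}$ (equivalently the paper's $r^{2-\tau}$ factor), since $r\lesssim t^{1/10}+t^{1/12}\rho'^{1/6}$ caps $w$ well below $1$ in that range. Once this bookkeeping is done the exponent does come out positive, but it is the step that actually needs checking — indeed, once you do it you discover the decay of $m-1$ was never needed, which is the paper's point. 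Second, your parenthetical that the conversion factor $t^{(2-\delta)/6}$ is a positive power of $t$ "precisely because $\delta<0$" is inaccurate: positivity only requires $\delta<2$, which holds throughout; the genuine role of $\delta<0$ (more precisely $\delta\leq\frac{1}{2}+\frac{3}{4}\tau$) in this paper is elsewhere, in Remark \ref{HolderimpliesLinfty}. Neither issue invalidates the argument.
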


\begin{proof}
The estimate on $\norm{ \partial \bar{\partial} (P_Y f ) }_{ C^{0,\alpha} _{\delta-2, \tau-2,t}(X)    }$ is immediate from the definition of $P_Y f$ and the fact that $ \norm{  \frac{1}{t} \tilde{\omega}_Y }_{ C^{0,\alpha}_{0, 0, t  }(X)  }\leq C$.

To estimate $\norm{ \Lap_{\omega_t} P_Y f- f_Y}_{ C^{0,\alpha}_{\delta-2, \tau-2,t}(X)  } $, it suffices to control the norm $\norm{ \Lap_{\omega_t} P_Y f- f_Y'}_{ C^{0,\alpha}_{\delta-2, \tau-2,t}(X)  }$, since $\norm{f_Y'' }=O( \epsilon_3^{\delta-\tau+2} \norm{f} )$ can be made arbitrarily small by choosing small $\epsilon_3$. By a standard formula of the Laplacian,
\[
\sqrt{-1} \partial \bar{\partial} (P_Y f)\wedge \omega_t^2= \frac{1}{6} (\Lap_{\omega_t}P_Y f) \omega_t^3.
\]
The point is that to control the Laplacian, we do not actually need to estimate $P_Y f$; all we need is the tautological properties of $\sqrt{-1}\partial \bar{\partial } P_Y f$. From this formula and the definition of $P_Y f$,
\[
\Lap_{\omega_t}P_Y f=  f_Y'
\frac{ 3\tilde{\omega}_Y \wedge \omega_t^2 }{ t \omega_t^3  }.
\]

In the region $\{|y|\gtrsim t^{ \frac{6}{14+\tau}  } \}\cup \{ |y|\leq t^{ \frac{6}{14+\tau}  }, r\gtrsim t^{1/10}+ t^{1/12}\rho'^{1/6} \}$ where the semi-Ricci-flat behaviour is dominant, the quantity $\frac{ 3\tilde{\omega}_Y \wedge \omega_t^2 }{ t \omega_t^3  }$ is very close to 1. Following almost the same calculations in Lemma \ref{omegaterror1}, one can extract an estimate in this region
\[
\norm{ \frac{ \tilde{\omega}_Y \wedge \omega_t^2 }{  \sqrt{-1} \Omega\wedge \overline{\Omega} } -1}_{ C^{0,\alpha}_{0, 0,t}   } \leq Ct^{  \frac{\tau+2}{ 2(14+\tau)  }   }, \quad \norm{ \frac{  t \omega_t^3 }{ 3 \sqrt{-1} \Omega\wedge \overline{\Omega} } -1}_{ C^{0,\alpha}_{0, 0,t}   } \leq Ct^{  \frac{\tau+2}{ 2(14+\tau)  }   },
\]
so $\norm{ \frac{ 3\tilde{\omega}_Y \wedge \omega_t^2 }{ t \omega_t^3  }-1} _{  C^{0,\alpha}_{0, 0,t}   }\leq Ct^{  \frac{\tau+2}{ 2(14+\tau)  }   }$, hence in this region
\[
\norm{\Lap_{\omega_t} P_Y f- f_Y' }_{C^{0,\alpha}_{\delta-2, \tau-2,t}  } \leq Ct^{  \frac{\tau+2}{ 2(14+\tau)  }   } \norm{ f_Y'}_{ C^{0,\alpha}_{\delta-2, \tau-2,t} } \leq Ct^{  \frac{\tau+2}{ 2(14+\tau)  }   } \norm{ f}_{ C^{0,\alpha}_{\delta-2, \tau-2,t} }.
\]
This term is suppressed by a power of $t$, hence negligible.

In the region $\{ |y|< t^{\frac{6}{14+\tau}  }, r< t^{1/10}+t^{1/12}\rho'^{1/6}  \}$, we have a very coarse estimate 
\[
\norm{ \frac{ 3\tilde{\omega}_Y \wedge \omega_t^2 }{ t \omega_t^3  }-1} _{  C^{0,\alpha}_{0, 0,t}   }\leq C,
\]
hence
\[
\norm{\Lap_{\omega_t} P_Y f- f_Y' }_{C^{0,\alpha}_{\delta-2, \tau-2,t}  } \leq C \norm{ f_Y' }_{C^{0,\alpha}_{\delta-2, \tau-2,t}  } = C \norm{ f_Y }_{C^{0,\alpha}_{\delta-2, \tau-2,t}  }.
\]
The last equality is because $f_Y'=f_Y$ in this region. But the  bounds on $f_Y$ in Lemma \ref{fYweightedHolderaverageproperty} imply that 
\[
\begin{split}
\norm{ f_Y}_{ C^{0,\alpha}_{\delta-2, \tau-2,t}( \{ |y|< t^{\frac{6}{14+\tau}  }, r< t^{1/10}+t^{1/12}\rho'^{1/6}  \} )  }
&\leq C\norm{f}_{ C^{0,\alpha}_{\delta-2, \tau-2,t}(X)    } \sup_r \{ r^{2-\tau}, r^{2-\delta} \} \\
&\leq C\norm{f}_{ C^{0,\alpha}_{\delta-2, \tau-2,t}(X)    } t^{ \frac{2}{14+\tau}   }.
\end{split}
\]
Conceptually, 
this extra gain of $t^{ \frac{2}{14+\tau}   } $ factor comes from the fact that integration on fibre has a regularising effect for a certain range of weights. This suppression factor makes $\norm{\Lap_{\omega_t} P_Y f- f_Y' }_{C^{0,\alpha}_{\delta-2, \tau-2,t}  } $ negligible.
\end{proof}

\begin{rmk}
It is easy to see that 
\[
\norm{P_Y f}_{C^{2,\alpha}(Y) }\leq C \norm{f}_{  C^{0,\alpha}(Y)   },
\]
from which we have some $t$-dependent bound
\[
\norm{P_Y f}_{C^{2,\alpha}_{\delta,\tau,t}(X) }\leq C(t) \norm{f}_{  C^{0,\alpha}_{\delta-2, \tau-2, t}(X)   }.
\]
The subtlety in the Lemma is that we did not seek a fully $t$-independent estimate of $P_Y f$, but were content with estimating $\sqrt{-1}\partial \bar{\partial} P_Y f $. Our first reason for doing so is that geometrically speaking, metric quantities are primary, and the potential is only auxiliary. The second reason is that the weighted H\"older space for $P_Y f$ is quite awkward to work with, due to the fact that $\delta-\tau+2>0$. 
\end{rmk}

We now sum over the pieces to define an approximate Green operator 
\begin{equation}
Pf= P_{1} f + P_{0,1} f+ P_{0,2} f+ P_Y f
\end{equation}
Combining Lemma \ref{Greenawayfromsingularfibre}, \ref{Greennearsingularfibre}, \ref{GreenY} and fixing $\epsilon_3\ll1, \Lambda_2\gg1, \Lambda_3\gg1$ to satisfy all the constraints, we see

\begin{cor}
Let $-2+\alpha<\delta<0$, $-2+\alpha<\tau<0$, and assume $\delta$ avoids a discrete set of values. Let $f$ be a function in $C^{0,\alpha}_{\delta-2, \tau-2,t}(X)  $ with $\int_X f\omega_t^3=0 $. Then one can choose $\epsilon_3, \Lambda_2, \Lambda_3$ such that
\[
\norm{ \Lap_{\omega_t} P f- f}_{ C^{0,\alpha}_{\delta-2, \tau-2,t}(X)  } \leq \frac{1}{25}\norm{ f}_{ C^{0,\alpha}_{\delta-2, \tau-2,t}(X)  } ,
\]
and
\[
\norm{ \partial \bar{\partial} P f}_{ C^{0,\alpha}_{\delta-2, \tau-2,t}(X)  } \leq C(\delta, \tau, \alpha)\norm{ f}_{ C^{0,\alpha}_{\delta-2, \tau-2,t}(X)  } .
\]
\end{cor}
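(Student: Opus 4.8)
The plan is to assemble the approximate Green operator $P = P_1 + P_{0,1} + P_{0,2} + P_Y$ directly from the four pieces, and to verify the two asserted estimates by adding up the individual contributions established in the preceding lemmas. The key structural observation is that the decomposition $f = f_{0,1} + f_{0,2} + f_1 + \dots + f_{N'} + f_Y$ was engineered precisely so that each piece is inverted (approximately) by exactly one of the four sub-operators: the fibrewise-average-zero functions $f_i$ supported away from the singular fibre by $P_1$ (Lemma \ref{Greenawayfromsingularfibre}), the average-zero piece $f_{0,1}$ near the node by $P_{0,1}$ and the $\C^3$-modelled piece $f_{0,2}$ by $P_{0,2}$ (both Lemma \ref{Greennearsingularfibre}), and the base function $f_Y$ by $P_Y$ (Lemma \ref{GreenY}).

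First I would write the total error as a sum
\[
\Lap_{\omega_t} Pf - f = \bigl( \Lap_{\omega_t} P_1 f - \textstyle\sum_i f_i \bigr) + \bigl( \Lap_{\omega_t} P_{0,1} f - f_{0,1} \bigr) + \bigl( \Lap_{\omega_t} P_{0,2} f - f_{0,2} \bigr) + \bigl( \Lap_{\omega_t} P_Y f - f_Y \bigr),
\]
which is exact because the four forcing pieces sum to $f$. Each bracketed term is bounded by $\tfrac{1}{100}\norm{f}$ by the cited lemmas, once $\epsilon_3 \ll 1$, $\Lambda_2 \gg 1$, $\Lambda_3 \gg 1$ are fixed to satisfy all of the constraints from Section \ref{Metricdeviation} simultaneously (and $t$ is taken small afterwards). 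Summing four such terms gives $\tfrac{4}{100}\norm{f} = \tfrac{1}{25}\norm{f}$, which is exactly the claimed bound. The hypothesis $\int_X f\,\omega_t^3 = 0$, i.e. $\bar f = 0$, is needed only for the $P_Y$ piece: Lemma \ref{GreenY} requires $\bar f = 0$ so that $\overline{f_Y}$ is small and the subtracted error $f_Y''$ can be absorbed into the $\tfrac{1}{100}$ budget.

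For the second estimate I would bound $\norm{\partial\bar\partial Pf}$ by the triangle inequality over the four summands. The bounds $\norm{P_1 f}_{C^{2,\alpha}_{\delta,\tau,t}} \leq C\norm{f}$, $\norm{P_{0,1}f}_{C^{2,\alpha}_{\delta,\tau,t}} \leq C\norm{f}$ and $\norm{P_{0,2}f}_{C^{2,\alpha}_{\delta,\tau,t}} \leq C\norm{f}$ from Lemmas \ref{Greenawayfromsingularfibre} and \ref{Greennearsingularfibre} control the full $C^{2,\alpha}_{\delta,\tau,t}$ norm, hence in particular the $\partial\bar\partial$ of each, giving $C^{0,\alpha}_{\delta-2,\tau-2,t}$ bounds on the corresponding $(1,1)$-forms. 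For the last piece the situation is slightly different and is the one subtle point: as emphasized in the remark after Lemma \ref{GreenY}, we do \emph{not} have a $t$-independent bound on $P_Y f$ itself in $C^{2,\alpha}_{\delta,\tau,t}$, but Lemma \ref{GreenY} does supply the $t$-independent bound $\norm{\partial\bar\partial(P_Y f)}_{C^{0,\alpha}_{\delta-2,\tau-2,t}} \leq C\norm{f}$, which is all that is needed here. Adding the four contributions yields the stated $C^{0,\alpha}_{\delta-2,\tau-2,t}$ bound on $\partial\bar\partial Pf$ with a constant $C(\delta,\tau,\alpha)$ independent of $t$.

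I expect the only real care in the writing to be bookkeeping of the order in which the parameters are fixed: the constants $\epsilon_3, \Lambda_2, \Lambda_3$ must be chosen once and for all (each small/large enough to make the relevant coefficient beat $\tfrac{1}{100}$ in its lemma), and only then is $t$ sent to zero, with all suppressed-by-a-power-of-$t$ error terms discarded at the very end. There is no genuine analytic obstacle at this stage — the hard work has already been done in Lemmas \ref{Greenawayfromsingularfibre}, \ref{Greennearsingularfibre} and \ref{GreenY} — so the corollary is essentially a matter of collecting those four estimates and checking that their constraints on $\epsilon_3, \Lambda_2, \Lambda_3$ are mutually compatible, which they are since each lemma only demands these parameters be sufficiently small or large in a one-sided fashion.
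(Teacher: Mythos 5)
Your proof is correct and follows exactly the paper's own argument: the corollary is obtained by summing the four sub-operators $P_1, P_{0,1}, P_{0,2}, P_Y$, applying Lemmas \ref{Greenawayfromsingularfibre}, \ref{Greennearsingularfibre}, \ref{GreenY} to each bracketed error term, and fixing $\epsilon_3, \Lambda_2, \Lambda_3$ compatibly before sending $t\to 0$. You also correctly identify the two subtle points the paper relies on --- that $\bar f=0$ is needed only for the $P_Y$ piece, and that for $P_Y f$ one only has the $t$-independent bound on $\partial\bar\partial(P_Y f)$ rather than on $P_Y f$ itself --- so nothing is missing.
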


Finally, we prove Proposition \ref{Greenoperator}.

\begin{proof}
(Proposition \ref{Greenoperator})
Let $f$ be a function in $C^{0,\alpha}_{\delta-2, \tau-2, t}(X)$ with integral zero.
We can define 
\[
u= P\sum_{j=0}^\infty ( 1-\Lap_{\omega_t} P  )^j   f,
\]
which converges because $\norm{\Lap_{\omega_t} P-1}\leq \frac{1}{25}$. Although we do not have good control on $u$ directly, we do know 
\[
\norm{ \partial \bar{\partial} u}_{ C^{0, \alpha}_{\delta-2, \tau-2, t}  }\leq C\norm{\partial \bar{\partial} P} \norm{ f} \leq C\norm{f}.
\]

Using our convention for the Laplacian, the $\partial\bar{\partial}$ exact (1,1)-form $\theta= {2\sqrt{-1}} \partial \bar{\partial} u$ solves $\Tr_{\omega_t} \theta=\Lap_{\omega_t}u= f$, so setting 
\begin{equation}\label{constructionofR}
\mathcal{R} (f)=\theta =2\sqrt{-1} \partial \bar{\partial } P \sum_{j=0}^\infty (1-\Lap_{\omega_t} P)^j f
\end{equation}
gives the desired right inverse $\mathcal{R}$ with bounds. We remark that $\mathcal{R}$ maps a real valued function to a real (1,1)-form.
\end{proof}

\subsection{Exponential localising property}\label{Exponentiallocalising}

An interesting consequence of the parametrix construction in section \ref{Decompositionpatchingparametrix} is that away from the singular fibres, the effect of the forcing term is very localised.

\begin{prop}
In the setup of Proposition \ref{Greenoperator}, if the forcing function $f$ is supported away from the neighbourhood of a smooth fibre
$\{ x\in X: |\pi(x)-y'|\leq d   \}\subset X\setminus \{ |y|\leq \epsilon_1   \}$, then 
\[
\norm{ \mathcal{R} f}_{ C^{0,\alpha}_{\delta-2, \tau-2,t} (\{ x\in X: |\pi(x)-y'|\leq \frac{d}{2}   \} ) } \leq C(\delta, \tau, \alpha) \exp{( \frac{ - m(\delta, \tau, \alpha) d}{ t^{1/2} } ) } \norm{ f}_{ C^{0,\alpha}_{\delta-2, \tau-2,t}(X) },
\]
where the exponential decay constants do not depend on $f, d, y', t$.
\end{prop}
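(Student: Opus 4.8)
The plan is to exploit the fact that the parametrix $P$ and its error operator are assembled entirely out of \emph{local} Green operators whose kernels decay exponentially in the base direction at the collapsing scale $t^{1/2}$, and then to run a weighted-norm (conjugation) argument that upgrades this off-diagonal decay into the claimed global localisation. Throughout, base distances are measured with $\omega_Y$, and I write $E = 1 - \Lap_{\omega_t}P$, so that by \eqref{constructionofR} we have $\mathcal{R}f = 2\sqrt{-1}\partial\bar\partial P g$ with $g = \sum_{j\ge 0}E^j f$ and $\norm{E}\le \tfrac1{25}$.

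First I would record the off-diagonal decay of the building blocks. For the fibrewise-average-zero pieces $P_1,P_{0,1},P_{0,2}$, the local operators $P_{y_i'}$, $P_{y=0}$ and $P_{\C^3}$ obey the exponential decay of Lemma \ref{HolderXytimesC}, Proposition \ref{HolderXytimesCweighted2} and Proposition \ref{HolderC3}, where the decay variable $|\zeta|$ is comparable to $t^{-1/2}|y-y_i'|$. Hence, if $h$ is supported in a base-ball $B(y_0,s)$, then $P_\bullet h$ and, crucially, $Eh = h - \Lap_{\omega_t}Ph$ are bounded on $\{\,d_Y(\pi(\cdot),y_0)\ge s+\ell\,\}$ by $C e^{-m\ell/t^{1/2}}\norm{h}$: the two error sources in Lemmas \ref{Greenawayfromsingularfibre}--\ref{Greennearsingularfibre} are the cutoffs, placed at base-scale $\Lambda_3 t^{1/2}$ (exponentially suppressed there), and the metric deviation of Proposition \ref{metricdeviationerror}, which is co-located with the local solution and so is itself exponentially tailed away from $\mathrm{supp}\,h$. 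In kernel language, $\partial\bar\partial P$ and $E$ thus have kernels dominated by $C\,(\text{local factor})\,e^{-m\,d_Y/t^{1/2}}$.

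Next I would conjugate by a weight adapted to $y'$. Let $\theta(x) = \max(0,\,d - d_Y(\pi(x),y'))$, a $1$-Lipschitz function of the base point equal to $0$ on $\mathrm{supp}\,f$ and $\ge d/2$ on the target $\{|\pi(x)-y'|\le d/2\}$, and set $w = \exp(m'\theta/t^{1/2})$ with $0 < m' < m$ to be fixed small. Since $|\theta(x)-\theta(z)|\le d_Y(\pi x,\pi z)$, the conjugated kernel of $E$ is bounded by a local factor times $e^{-(m-m')d_Y/t^{1/2}}$; the effective range of these kernels is $d_Y\lesssim \Lambda_3 t^{1/2}$, over which $w$ changes only by a bounded factor $e^{O(m'\Lambda_3)}$. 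Choosing $m'$ small (given the already-fixed $\Lambda_3$) keeps $\norm{wEw^{-1}}\le \tfrac1{25}e^{O(m'\Lambda_3)} < 1$, so the Neumann series still converges and $\norm{w g} = \big\|\sum_j (wEw^{-1})^j (wf)\big\|\le C\norm{wf} = C\norm{f}$, using $w\equiv 1$ on $\mathrm{supp}\,f$. The same conjugation-boundedness of $\partial\bar\partial P$ gives $\norm{w\,\mathcal{R}f}\le C\norm{f}$, and restricting to the target, where $w\ge e^{m'd/(2t^{1/2})}$, yields the claim with $m = m'/2$.

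The step I expect to be the main obstacle is the base zero-mode, i.e.\ the piece $P_Y$: as an operator producing a potential it has no off-diagonal decay, the Green function of $\Lap_{\frac1t\tilde\omega_Y}$ on the compact surface $Y$ being only logarithmic. The resolution is that only $\partial\bar\partial P_Y$ enters $\mathcal{R}f$, and by construction $\sqrt{-1}\partial\bar\partial P_Y g = \tfrac1{2t}(g_Y)'\,\tilde\omega_Y$ is supported exactly where the base forcing $(g_Y)' = g_Y - \overline{g_Y}\,\chi_0''/\!\int\chi_0''\tilde\omega_Y$ lives. The local part $g_Y$ inherits exponential decay near $y'$ from that of $g$, so it fits the weighted argument; the only genuinely nonlocal contribution is the global-average correction carried by $\chi_0''$. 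Here I would use the freedom in choosing $\chi_0''$: since the target is contained in the smooth locus and avoids the node, $\chi_0''$ can be taken to vanish on it (whenever $\{|y|>\epsilon_1\}$ still has room to carry half the $\omega_Y$-mass), so that this term contributes nothing there; failing that, one falls back on the bound $|\overline{g_Y}|\le C t^{(\tau-\delta)/2}\epsilon_3^{\delta-\tau+2}\norm{g}$ of Lemma \ref{fYweightedHolderaverageproperty}, together with $\int_X g\,\omega_t^3 = 0$ (which persists under $E$ because $\int_X \Lap_{\omega_t}(P\,\cdot)\,\omega_t^3 = 0$), to absorb it into the error. Checking that all these weighted estimates survive the conjugation uniformly in $d$ and $t$ is the delicate bookkeeping the full proof must carry out.
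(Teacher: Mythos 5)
Your proposal is correct in substance and rests on exactly the same structural facts as the paper's own proof: (i) after the decomposition of Section \ref{Decompositionpatchingparametrix}, every constituent of the parametrix either has finite propagation $O(\Lambda_3 t^{1/2})$ in the base (the $P_1$ part, because of the cutoffs $\tilde{\chi}_i'$), or sends its input to a region ($\{|y|\leq \epsilon_3\}$ for $P_{0,1},P_{0,2}$, and $\mathrm{supp}\,\chi_0''$ for the average correction) that can be arranged to lie at base distance $\gtrsim d/2$ from the target fibre; (ii) only $\partial\bar{\partial}P_Y$, never $P_Y$ itself, enters $\mathcal{R}$, so the base zero-mode is local apart from the $\chi_0''$ term; (iii) the damping $\norm{1-\Lap_{\omega_t}P}\leq \frac{1}{25}$. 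Where you genuinely differ is in how the Neumann series is summed. The paper runs a support-propagation count: each application of $1-\Lap_{\omega_t}P$ moves support toward $X_{y'}$ by at most $O(\Lambda_3 t^{1/2})$, so on the order of $d/(\Lambda_3 t^{1/2})$ iterations are needed before anything reaches $\{|\pi(x)-y'|\leq \frac{d}{2}\}$, and each iteration costs a factor $\frac{1}{25}$, giving $(\frac{1}{25})^{cd/t^{1/2}}$. You instead conjugate by the Agmon/Combes--Thomas weight $w=e^{m'\theta/t^{1/2}}$ and sum the series in the conjugated norm. The paper's count is shorter and entirely avoids discussing how multiplication by $w$ interacts with the norms; your conjugation is more systematic, makes the uniformity in $f,d,y',t$ transparent, and would survive in situations where the parametrix pieces have only exponential off-diagonal tails rather than genuinely compact propagation. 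It does require the additional (true, but to-be-verified) fact that multiplication by $w^{\pm 1}$ is bounded on $C^{0,\alpha}_{\delta-2,\tau-2,t}$ with constant $1+O(m')$ uniformly in $t$; this holds because $\theta/t^{1/2}$ is $O(1)$-Lipschitz with respect to $\omega_t$, whose base direction is scaled by $t^{-1/2}$.

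One caveat: your fallback for the case where $\chi_0''$ cannot be kept away from $\{|y-y'|\leq d\}$ does not work as stated. Lemma \ref{fYweightedHolderaverageproperty} only yields a bound of size $\epsilon_3^{\delta-\tau+2}\norm{f}$ on the contribution of the average correction --- a fixed power of $\epsilon_3$, not anything exponentially small in $d/t^{1/2}$ --- so it cannot simply be ``absorbed into the error.'' The correct move, and the one the paper makes, is to dispose of this case at the outset: assume without loss of generality that $\{|y-y'|\leq d\}$ carries at most a fixed fraction of the $\omega_Y$-measure (the case of large $d$ being recovered by covering the target region with balls of bounded radius and applying the result to each), after which $\chi_0''$ can always be placed away from the full $d$-ball and your primary route goes through with no fallback needed.
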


\begin{proof}
Without loss of generality the set $\{ |y-y'|\leq d  \}\subset Y$ has less than $\frac{1}{3}$ of the total $\omega_Y$-measure.
Using the flexibility of the definition of the cutoff function $\chi_0''$, we may arrange that $\chi_0''$ is supported away from $\{ |y-y'|\leq d  \}\subset Y$. 
Then in the construction of the operator $1-\Lap_{\omega_t} P$, only the part $1-\Lap_{\omega_t} P_1$ can propagate a forcing term supported outside $\{ x\in X: |y-y'|\leq d  \}\subset X$ into this set.

The key point now is that the construction of $\mathcal{R}$ in (\ref{constructionofR}) involves an iteration of the operator $1-\Lap_{\omega_t} P$, and each iteration can propagate the support towards $X_{y'}$ only by a very small amount of $\omega_Y$-distance, of order $\sim O(\Lambda_3 t^{1/2} )$. Thus it requires about $O( \frac{d}{\Lambda_3 t^{1/2}  }  )$ iterations to propagate the forcing term into the set $\{ |y-y'|\leq \frac{d}{2}  \}\subset X$. Each iteration results in a damping factor $\frac{1}{25}$ on the norm of the forcing term, so 
\[
\norm{ \mathcal{R} f}_{ C^{0,\alpha}_{\delta-2, \tau-2,t} (\{ x\in X: |\pi(x)-y'|\leq \frac{d}{2}   \} ) } \lesssim  (\frac{1}{25})^{ \frac{ - C d}{ t^{1/2} }  } \norm{ f}_{ C^{0,\alpha}_{\delta-2, \tau-2,t}(X) }
\]
as required.
\end{proof}

\section{Collapsed Calabi-Yau metric}

\subsection{Perturbation to the Calabi-Yau metric}

We carry out the main gluing construction. To avoid complication, we choose the weights $\tau=-\frac{2}{3}$ and $-\frac{3}{13}<\delta<0$ (\cf Remark \ref{HolderimpliesLinfty}). We also assume that $\delta$ avoids a discrete set of values, so Proposition \ref{Greenoperator} applies.
The elementary numerical properties of these weights are summarised as
\[
\begin{cases}
\norm{f }_{ C^{0,\alpha}_{ 0, 0, t}(X)  }
\leq
C t^{ \frac{1}{6}\delta-\frac{1}{3}  }\norm{f }_{ C^{0,\alpha}_{ \delta-2, \tau-2, t}(X)  }, \\
\norm{fg }_{ C^{0,\alpha}_{\delta-2, \tau-2, t}(X)  }
\leq
C\norm{g} _{ C^{0,\alpha}_{ 0, 0, t}(X)  }
\norm{f }_{ C^{0,\alpha}_{ \delta-2, \tau-2, t}(X)  },
\end{cases}
\]
where $f, g$ are arbitrary elements of the function spaces. The same statement holds for forms.

\begin{thm}\label{CYmetric}
Let $\tau=-\frac{2}{3}$ and $-\frac{3}{13}< \delta< 0$. Assume $\delta$ avoids the discrete set of values in Proposition \ref{Greenoperator}. Then for sufficiently small $t$, the Calabi-Yau metric $\tilde{\omega}_t$ in the class $[\omega_t]= [\omega_X]+ \frac{1}{t}[\omega_Y]$ is close to $\omega_t$, with the bound
\begin{equation}
\norm{ \tilde{\omega}_t- {\omega}_t  }_{ C^{0, \alpha}_{\delta-2, \tau-2, t}(X)  }\leq C(\delta,  \alpha) t^{ \frac{23}{60} + \frac{1}{20}\delta  }.
\end{equation}
In particular by the numerical property of the weights
\[
\norm{ \tilde{\omega}_t- {\omega}_t  }_{ C^{0, \alpha}_{0, 0, t}(X)  }\leq C(\delta,  \alpha) t^{ \frac{1}{20} + \frac{13}{60}\delta  }\ll1.
\]
\end{thm}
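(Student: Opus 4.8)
The plan is to solve the complex Monge-Amp\`ere equation for $\tilde\omega_t$ by a contraction mapping argument in the weighted space $C^{0,\alpha}_{\delta-2,\tau-2,t}(X)$, using the $t$-independent right inverse $\mathcal{R}$ of Proposition \ref{Greenoperator} as an exact inverse of the linearised operator $\Tr_{\omega_t}$. Writing the sought metric as $\tilde\omega_t=\omega_t+\theta$ with $\theta$ a $\partial\bar{\partial}$-exact $(1,1)$-form, and recalling that $\omega_t^3=a_t(1+f_t)\sqrt{-1}\Omega\wedge\overline{\Omega}$ while the Calabi-Yau condition reads $\tilde\omega_t^3=a_t\sqrt{-1}\Omega\wedge\overline{\Omega}$, the equation is equivalent to
\begin{equation}
\Tr_{\omega_t}\theta=g-Q(\theta),\quad g=\frac{1}{1+f_t}-1,\quad Q(\theta)=\frac{(\omega_t+\theta)^3-\omega_t^3-3\theta\wedge\omega_t^2}{\omega_t^3},
\end{equation}
where $\Tr_{\omega_t}\theta=3\theta\wedge\omega_t^2/\omega_t^3$ and $Q$ collects the quadratic and cubic terms in $\theta$. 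Since $\theta$ is exact and $\omega_t,\tilde\omega_t$ lie in the same class, both $g$ and $Q(\theta)$ have vanishing $\omega_t^3$-average, so $\mathcal{R}$ applies at every step and a solution is exactly a fixed point of $T(\theta)=\mathcal{R}(g-Q(\theta))$, which automatically lands in the space of exact forms.

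First I would record the size of the inhomogeneous term. By (\ref{omegaterror}) one has $\norm{f_t}_{C^{0,\alpha}_{\delta-2,\tau-2,t}}\leq Ct^{\delta'}$, and since $f_t$ is small in $L^\infty$ (Remark \ref{HolderimpliesLinfty}) the algebra property of the weighted norms yields $\norm{g}_{C^{0,\alpha}_{\delta-2,\tau-2,t}}\leq C\norm{f_t}_{C^{0,\alpha}_{\delta-2,\tau-2,t}}\leq Ct^{\delta'}$. Hence $\norm{\mathcal{R}g}_{C^{0,\alpha}_{\delta-2,\tau-2,t}}\leq Ct^{\delta'}$ with $t$-independent $C$, which fixes the natural radius $M\sim t^{\delta'}$ of the ball on which to run the iteration.

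The heart of the argument is the nonlinear estimate. Applying the multiplication property $\norm{fh}_{C^{0,\alpha}_{\delta-2,\tau-2,t}}\leq C\norm{h}_{C^{0,\alpha}_{0,0,t}}\norm{f}_{C^{0,\alpha}_{\delta-2,\tau-2,t}}$ to the two factors of $\theta$ in each quadratic term (and thrice in the cubic term) gives
\begin{equation}
\norm{Q(\theta)-Q(\theta')}_{C^{0,\alpha}_{\delta-2,\tau-2,t}}\leq C\bigl(\norm{\theta}_{C^{0,\alpha}_{0,0,t}}+\norm{\theta'}_{C^{0,\alpha}_{0,0,t}}\bigr)\norm{\theta-\theta'}_{C^{0,\alpha}_{\delta-2,\tau-2,t}}.
\end{equation}
The crucial point is that on the ball of radius $M\sim t^{\delta'}$ the absolute norm is genuinely higher order: by the numerical property $\norm{\theta}_{C^{0,\alpha}_{0,0,t}}\leq Ct^{\frac16\delta-\frac13}\norm{\theta}_{C^{0,\alpha}_{\delta-2,\tau-2,t}}\leq Ct^{\delta'+\frac16\delta-\frac13}=Ct^{\frac{1}{20}+\frac{13}{60}\delta}$, which tends to $0$ because the constraints of Remark \ref{HolderimpliesLinfty} guarantee $\delta'+\frac{\delta-2}{6}>0$. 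Thus for small $t$ the Lipschitz constant of $T$ drops below $\frac12$ and $T$ preserves the ball of radius $2C\norm{\mathcal{R}}t^{\delta'}$, so the Banach fixed point theorem furnishes a unique $\theta$ with $\norm{\theta}_{C^{0,\alpha}_{\delta-2,\tau-2,t}}\leq Ct^{\delta'}$. The same smallness of $\norm{\theta}_{C^{0,\alpha}_{0,0,t}}$ keeps $\omega_t+\theta$ positive, and by Calabi's uniqueness the K\"ahler metric $\omega_t+\theta$ in the class $[\omega_t]$ with volume form $a_t\sqrt{-1}\Omega\wedge\overline{\Omega}$ coincides with $\tilde\omega_t$; this is the first bound, and the second follows by applying the numerical property once more to $\theta=\tilde\omega_t-\omega_t$, since $\delta'+\frac16\delta-\frac13=\frac{1}{20}+\frac{13}{60}\delta$.

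The main obstacle is precisely this nonlinear estimate in the $t$-dependent norms: one must certify that the constants in the multiplication and numerical inequalities, together with the operator norm $\norm{\mathcal{R}}$, are all uniform in $t$, so that the quadratic remainder is suppressed by the strictly positive power $t^{\delta'+\frac{\delta-2}{6}}$ and the contraction is genuine rather than an artifact of the shrinking norms. Everything hinges on the interplay between $C^{0,\alpha}_{\delta-2,\tau-2,t}$, which carries the sharp inhomogeneous estimate, and the absolute norm $C^{0,\alpha}_{0,0,t}$, which controls the nonlinearity; the admissible window $-\frac{3}{13}<\delta<0$ at $\tau=-\frac23$ is exactly what keeps both the linear theory of Proposition \ref{Greenoperator} and this higher-order suppression simultaneously valid.
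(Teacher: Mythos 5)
Your proposal is correct and follows essentially the same route as the paper: a Banach fixed point argument built on the $t$-independent right inverse $\mathcal{R}$ of Proposition \ref{Greenoperator}, the volume form error estimate (\ref{omegaterror}), and the multiplication and numerical properties of the weighted norms. The only cosmetic difference is that you iterate on the exact form $\theta$ via $T(\theta)=\mathcal{R}(g-Q(\theta))$, whereas the paper iterates on the scalar $f$ with $\tilde{\omega}_t=\omega_t+\mathcal{R}f$; the two fixed point equations are conjugate under $\theta=\mathcal{R}f$, so the estimates and conclusions coincide.
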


\begin{proof}
We wish to find the Calabi-Yau metric $\tilde{\omega}_t$ as a small perturbation of the metric ansatz $\omega_t$. Using the defining conditions (\cf section \ref{Regularisingthesemiflatmetric})
\[
\tilde{\omega}_t^3 = a_t \sqrt{-1}\Omega \wedge \overline{\Omega}, \quad \omega_t^3= a_t (1+ f_t) \sqrt{-1} \Omega \wedge \overline{\Omega},
\]
it suffices to find a function $f\in C^{0, \alpha}_{\delta-2, \tau-2, t  }(X)$ with $\int_{X} f\omega_t^3=0$, such that 
\begin{equation}\label{perturbationtoCY}
( \omega_t +  \mathcal{R} f  )^3= \frac{1}{1+f_t} \omega_t^3, \quad \norm{f}_{ C^{0, \alpha}_{\delta-2, \tau-2, t  }(X) } \leq  C t^{ \frac{23}{60} + \frac{1}{20}\delta  } .
\end{equation}
Once we found $f$, then setting $\tilde{\omega}_t= \omega_t+  \mathcal{R} f $ gives the CY metric with estimate. We point out the subtlety that although $\omega_t$ and $f$ can fail to be smooth, the equation $\tilde{\omega}_t^3=a_t \sqrt{-1}\Omega \wedge \overline{\Omega}$ would imply the smoothness of $\tilde{\omega}_t$ by standard argument, so $\tilde{\omega}_t$ is an honest CY metric, and hence the unique CY metric in this class.

We now focus on solving (\ref{perturbationtoCY}). Define the nonlinear operator $\mathcal{F}$ acting on the subspace
$\{ f: \int_X f\omega_t^3=0  \}\subset C^{0,\alpha}_{\delta-2, \tau-2,t }(X)$,
\[
\mathcal{F}(f)= \frac{ ( \omega_t +  \mathcal{R} f  )^3 }{   \omega_t ^3 }-1.
\]
Notice $\mathcal{F}(f)$ automatically has zero integral.
We restrict attention to the open subset
\[
\mathcal{U}=\{ f\in C^{0, \alpha}_{\delta-2, \tau-2,t}(X) : \int_X f\omega_t^3=0, \norm{f}_{ C^{0, \alpha}_{0,0, t}(X)  }< \epsilon_4    \} \subset C^{0, \alpha}_{\delta-2, \tau-2,t}(X)
\]
where $\epsilon_4\ll1$ is a small constant independent of $t$,
and we separate $\mathcal{F}$ into the linearisation and the nonlinearity,
\[
\mathcal{F}(f)= \Tr_{\omega_t} \mathcal{R} f+ Q(f)= f+ Q(f).
\]
Here $Q(0)=0$.
The equation (\ref{perturbationtoCY}) can be cast in the form of a fixed point equation
\[
f= \frac{-f_t}{1+ f_t}- Q(f).
\]

Using the basic numerical properties of the weights, for $u, v\in \mathcal{U}$, the nonlinearity $Q$ satisfies
\[
\begin{split}
\norm{Q(u)- Q(v)}_{ C^{0,\alpha}_{\delta-2, \tau-2, t}(X)  }& \leq C ( \norm{u}_{ C^{0, \alpha}_{0,0, t}(X)  } +  \norm{v}_{ C^{0, \alpha}_{0,0, t}(X)  }   )\norm{u-v}_{ C^{0,\alpha}_{\delta-2, \tau-2, t}(X)   }  \\
&\leq C\epsilon_4 \norm{u-v}_{C^{0,\alpha}_{\delta-2, \tau-2, t}(X)   } \\
& \leq \frac{1}{2} \norm{u-v}_{C^{0,\alpha}_{\delta-2, \tau-2, t}(X)   }.
\end{split}
\]
The contraction property in the last step can be ensured by choosing $\epsilon_4$ sufficiently small. On the other hand,
our volume error estimate (\ref{omegaterror}) reads
\[
\norm{f_t}_{  C^{0,\alpha}_{\delta-2, \tau-2, t}(X)     } \leq C t^{ \frac{23}{60} + \frac{1}{20}\delta  } ,
\]
which implies by the numerical properties of the weights, that
\[
\norm{ \frac{-f_t}{1+ f_t}  }_{  C^{0,\alpha}_{\delta-2, \tau-2, t}(X)     } \leq C t^{ \frac{23}{60} + \frac{1}{20}\delta  } , \quad \norm{ \frac{-f_t}{1+ f_t}  }_{  C^{0,\alpha}_{0, 0, t}(X)     } \leq C t^{ \frac{13}{60}\delta+ \frac{1}{20}  }\ll\epsilon_4.
\]
The Banach fixed point theorem then yields a solution $f\in \mathcal{U}$ to equation (\ref{perturbationtoCY}) with estimates, as required.
\end{proof}

\begin{rmk}
A particular consequence of 
 our bound is
\[
|\tilde{\omega}_t- \omega_t|_{\omega_t  } \leq C(\delta) t^{ \frac{1}{20}+ \frac{13}{60} \delta  }, \quad -\frac{3}{13}< \delta<0 .
\]
Notice that the exponent $0<\frac{1}{20}+ \frac{13}{60}\delta< \frac{1}{20}$ is rather small, which indicates that our metric ansatz $\omega_t$ is a rather coarse approximation. Our gluing construction is only made possible because of the optimal nature of the linear theory.
\end{rmk}

We now sketch the proof of Theorem \ref{GHlimitfibrescale} and \ref{omegaCblowuplimit}, which are immediate consequences of the main gluing theorem \ref{CYmetric}.

\begin{proof}
By construction $\omega_t$ is $C^0$ close to the product metric $G_0^*(\omega_{SRF}|_{X_0}+ \frac{A_0}{t} \sqrt{-1} dy\wedge d\bar{y})$ in the region $\{ r> t^{1/10}+ t^{1/12}\rho'^{1/6} , |y|< t^{ \frac{6}{14+\tau } }  \}$, with error suppressed by a power of $t$.
The region $\{  r< t^{1/10}+ t^{1/12}\rho'^{1/6} , |y|< t^{ \frac{6}{14+\tau } }  \}$ is negligible in the GH convergence because every point is within $\tilde{\omega}_t$ distance $t^{ \frac{1}{14+\tau}  }$ to the previous region. The region $\{ |y|> t^{ \frac{6}{14+\tau } } \}$ is invisible to the pointed GH limit because its $\tilde{\omega}_t$-distance to the nodal point is of order 
$O( t^{\frac{-(2+\tau)}{2(14+\tau)} }  )$, which diverges to infinity. Thus the pointed GH limit of $\tilde{\omega}_t$ is $X_0\times \C$ with the product metric, proving Theorem \ref{GHlimitfibrescale}.

On any fixed Euclidean ball inside $ F_t^{-1}(U_1)\subset \C^3$  centred at the origin (\cf section \ref{GeometryofthemodelmetricC3} for notation), the metric ansatz has the asymptotic formula $\omega_t\sim ( \frac{t}{2A_0} )^{1/3} \omega_{\C^3} $ as $t\to 0$, so Theorem \ref{CYmetric} easily implies Theorem \ref{omegaCblowuplimit} as well.
\end{proof}

\begin{rmk}
As a digression, the exponential localising property discussed in section \ref{Exponentiallocalising} implies that the CY metric near a smooth fibre is locally determined up to exponentially small corrections from the rest of the manifold, and in particular receives almost no correction effect from the initial errors supported near the singular fibre. Thus contrary to the low global regularity of our metric ansatz $\omega_t$, the actual CY metric $\tilde{\omega}_t$ may have much better regularity near a given smooth fibre, such as admitting a formal power series expansion in $t$ similar to the work of J. Fine \cite{Fine}. It is interesting to compare this observation with the very recent work of Hein and Tosatti \cite{HeinTosatti}.
\end{rmk}

\subsection{Open directions}

In this section we speculate how this work may be generalised, in the direction of describing collapsing CY metrics $\tilde{\omega}_t$ on more complicated fibrations $\pi:X\to Y$ 
over a 1-dimensional base $Y$, where $[\tilde{\omega}_t]$ lies in the K\"ahler class $[\omega_X+ \frac{1}{t}\omega_Y]$, and $0<t\ll1$. To begin with, we point out that our gluing construction depends essentially
on the fact that all fibres admit (possibly singular) Calabi-Yau metrics, and on the existence of the model CY metric $\omega_{\C^3}$ on $\C^3$, which fits well with the singularity in the fibration $\pi$. The 1-dimensional base assumption is also essential because much less is known about the generalised KE metrics on $Y$ for higher dimensions.

Following the papers \cite{Ronan}\cite{Gabor} a large class of examples of complete CY metrics on $\C^n$ are now known, which generalise the model metric $\omega_{\C^3}$.  In particular, on the total space of the standard higher dimensional Lefschetz fibration $f: \C^n\to \C$ where $f=\sum z_i^2$, there is a complete CY metric $\omega_{\C^n}$, whose asymptotic behaviour at infinity approximates the semi-Ricci-flat metric on $\C^n$, namely that in the fibre direction it approximates the Stenzel metrics on the fibres, and in the horizontal direction it is predominantly the pullback of the Euclidean metric on the base.

Now suppose a projective CY manifold $X$ admits a Lefschetz fibration $\pi: X\to Y$, where the fibres have complex dimension at least 3. By adjunction, the fibres are Calabi-Yau varieties in their own right, and admit (possibly singular) Calabi-Yau metrics. The result of Hein and Sun \cite{HeinSun} says in particular that the CY metrics on the singular fibres are modelled on the Stenzel metric near the nodal point, with polynomial rate of convergence. By standard gluing argument, the CY metrics on the smoothing fibres are modelled on the stenzel metric in the region close to the vanishing cycles. Thus it seems very plausible that the collapsing metric $\tilde{\omega}_t$ is obtained by gluing a suitably scaled copy of $\omega_{\C^n}$ to a suitably regularised version of the semi-Ricci-flat metric on $X$. It is also conceivable to extend this picture to more complicated fibrations with isolated critical points, using the model metrics provided by \cite{Ronan}\cite{Gabor}.

In a slightly different vein, the strategy of producing complete CY metrics in \cite{Ronan}\cite{Gabor} does not depend in an essential way on the ambient manifold being $\C^n$. To give a special interesting example to indicate the possible generalisation, it is a folklore speculation that there may be a non-standard complete CY metric $\omega_{Q_\epsilon}$ on the affine quadric $Q_{\epsilon}=\{ \zeta_1^2+ \zeta_2^2+ \zeta_3^2+ \zeta_4^2= \epsilon^2   \}$, which admits a Lefschetz fibration by projecting to the first coordinate
\[
f: Q_\epsilon \to \C, \quad (\zeta_1, \zeta_2, \zeta_3, \zeta_4)\mapsto \zeta_1.
\]
 There are exactly two singular fibres, corresponding to $\zeta_1=\pm \epsilon$. The expected behaviour is that asymptotically near infinity $\omega_{Q_\epsilon}$ looks like the semi-Ricci-flat metric on $Q_\epsilon$, namely that restricted to the fibres it approximates the Eguchi-Hanson metrics on the fibres, and in the horizontal direction it is dominated by the pullback of the Euclidean metric $\frac{\sqrt{-1}}{2} d\zeta_1\wedge d\bar{\zeta}_1$; in particular $\omega_{Q_\epsilon}$ has maximal volume growth rate and tangent cone $\C^2/ \Z_2\times \C$ at spatial infinity. As $\epsilon\to 0$, it is expected that $\omega_{Q_\epsilon}$ converges to a CY metric $\omega_{Q_0}$ on the conifold $Q_0$, with local tangent cone at the origin isometric to the Stenzel cone, and tangent cone at infinity 
isometric to $\C^2/\Z_2 \times \C$. This would be one of the simplest examples of such conjectural transition behaviours between different Calabi-Yau cones. Alternatively, these  $\omega_{Q_\epsilon}$ can be regarded as a family of metrics on the fixed complex manifold $Q_1$, by changing the relative size of the fibre compared to the base, much like our setup in the compact case.

Now the relevance of $\omega_{Q_\epsilon}$ to the collapsing metrics comes in when we  `collide two singular fibres'. More formally, let $X_\epsilon$ be a polarised family of projective CY 3-folds admitting Lefschetz K3 fibrations over $Y= \mathbb{P}^1$, such that in a neighbourhood containing two critical points, the fibration is modelled by $f: Q_\epsilon\to \C$. The collapsing CY metric $\tilde{\omega}_{t, \epsilon}$ depends on both the small collapsing parameter $t$ and the small degeneration parameter $\epsilon$ in the picture. 

\begin{itemize}
\item 
When $\epsilon>0$ is fixed and $t\to 0$, the situation is a collapsing Lefschetz K3 fibration, covered by the gluing construction	in this paper. The $\tilde{\omega}_{t, \epsilon}$ distance between the two nearby singular fibres is of order $O( \epsilon t^{-1/2}  )$. The quantisation scale, namely the $\tilde{\omega}_{t, \epsilon} $-length scale of the $\C^3$ bubble embedded in $X_\epsilon$, is of order $O((\frac{t}{A_0})^{1/6}  )$.
To understand how $A_0$ depends on $\epsilon$, we notice that for $\zeta_1$ very close to $\epsilon$, the Lefschetz fibration is approximately 
\[
y=-2\epsilon( \zeta_1-\epsilon)=  \zeta_2^2+ \zeta_3^2+ \zeta_4^2,
\]
which means
\[
A_0\sim \int_{X_{y=0}} \Omega_0\wedge \overline{\Omega}_0 \sim \int_{X_{y=0}} \frac{ \Omega\wedge \overline{\Omega} }{ dy\wedge d\bar{y} }\sim  \frac{1}{\epsilon^2} \int_{X_{y=0}} \frac{ \Omega\wedge \overline{\Omega} }{ d\zeta_1\wedge d\bar{\zeta}_1 }
\sim  O(\frac{1}{\epsilon^2}  ).
\]
Hence the quantisation scale is $O(t^{1/6} \epsilon^{1/3})$.
\\

\item
When we descrease $\epsilon$ until $\epsilon\sim t$,  then 
$O(\epsilon t^{-1/2}  )=O( t^{1/6} \epsilon^{1/3}  )=O( t^{1/2} )$, namely 
 the quantisation scale is comparable to the $\omega_{t, \epsilon}$-distance between the two critical points, so the interaction between the two $\C^3$ bubbles become significant. On the other hand, if we substitute $\zeta_i= \epsilon \zeta_i'$, and scale the metric by a factor $t^{-1}$ so that the distance scale becomes of order 1, then as $t\sim \epsilon\to 0$ we expect to see a blow up limit complex analytically isomorphic to $Q_1$, which is up to some scaling factor isometric to some member of the family of model metrics on $Q_1$.
 \\

\item 
When $\epsilon=0$, the CY manifolds develop a local conical singularity, and by Hein and Sun's result the CY metric $\tilde{\omega}_{t, 0}$ is locally modelled on the Stenzel cone, at least on some extremely small scale. When $\epsilon\ll t$, the effect of collapsing is insignificant in a very small region near the conical point, and one sees the usual behaviour of the smoothing of the conical singularity.

\end{itemize}

These discussions are meant to suggest that there is a numerous supply of non-compact complete CY metrics associated to a fibration structure, and these examples are intimately tied to the local behaviour of collapsing metrics on compact CY manifolds admitting fibration structures.

\end{document}